\documentclass[11pt, reqno]{amsart}

\usepackage{amsmath}
\usepackage{fullpage}
\usepackage{amssymb}
\usepackage{amsthm}
\usepackage{bbm}
\usepackage{subfigure}
\usepackage{float}
\usepackage{graphicx}
\usepackage{color}
\usepackage{xcolor}
\usepackage{epstopdf}
\usepackage{enumerate}
\usepackage{enumitem}
\graphicspath{{./figs/}}
\newcommand{\e}{\mathrm{e}}
\newcommand{\dd}{\,\mathrm{d}}
\newcommand{\R}{\mathbb{R}}
\newcommand{\Z}{\mathbb{Z}}
\newcommand{\T}{\mathbb{T}}

\newcommand{\lb}{\langle}
\newcommand{\rb}{\rangle}
\newtheorem{lemma}{Lemma}[section]
\newtheorem{theorem}[lemma]{Theorem}
\newtheorem{proposition}[lemma]{Proposition}

\newtheorem{definition}[lemma]{Definition}
\allowdisplaybreaks
\newcommand{\p}{\partial}
\DeclareMathOperator{\supp}{supp}

\DeclareMathOperator{\Imag}{Im}

\newcommand{\fe}{{\rm e}}
\newcommand{\eps}{\varepsilon}

\numberwithin{equation}{section}
\newcommand{\msc}[2][2000]{%
	\let\@oldtitle\@title%
	\gdef\@title{\@oldtitle\footnotetext{#1 \emph{Mathematics subject
				classification.} #2}}%
}

\title[A filtered scheme for dNLS]{Explicit Fourier Integrator for the Periodic dNLS via Gauge Transformation: Low‐Regularity Estimates in Discrete Bourgain Spaces}

\author{Lun Ji}
\address{Department of Applied Mathematics, The Hong Kong Polytechnic University, Hung Hom, Hong Kong (L.~Ji)}
\email{lun-422.ji@polyu.edu.hk}

\author{Hang Li}
\address{Laboratoire Jacques-Louis Lions, Sorbonne Université, UPMC, 4 place Jussieu, Paris 75005, France (H.~Li)}
\email{hang.li.1@sorbonne-universite.fr}

\author{Alexander Ostermann}
\address{Digital Science Center\\Universit\"at Innsbruck\\Innrain~15\\6020 Innsbruck\\Austria (A.~Ostermann)}
\email{alexander.ostermann@uibk.ac.at}

\author{Gangfan Zhong}
\address{School of Mathematics, Statistics and Mechanics, Beijing University of Technology, 100 Pingleyuan, Chaoyang District, Beijing 100124, China (G.~Zhong)}
\email{gfzhong@emails.bjut.edu.cn}

\subjclass[2020]{65M12, 65M15, 65M70}

\keywords{Derivative nonlinear Schr\"odinger equation; Bourgain space; low-regularity; error estimate}
\thanks{L. Ji is partially supported by the Research Grants Council of Hong Kong (grant No.~15306123). H. Li is supported by the European Research Council (ERC) under the European Union's Horizon 2020 research and innovation program (grant agreement No.~850941), as well as by a postdoctoral fellowship from the Foundation Sciences Math\'ematiques de Paris (FSMP)}

\begin{document}
	
	\begin{abstract}
		The derivative nonlinear Schr\"odinger equation is a fundamental model for the propagation of nonlinear dispersive waves in, for example, plasma physics and nonlinear optics. In this work, we consider this model on the one-dimensional torus and study a filtered explicit Fourier integrator for the corresponding periodic problem. After applying a periodic gauge transformation, we consider a frequency-truncated model and its filtered exponential-Euler discretization. The main difficulty comes from the derivative cubic nonlinearity in the periodic setting, since local smoothing is unavailable and resonant interactions are stronger than in the non-periodic case. To address this issue, we develop a discrete Bourgain-space framework adapted to the gauge-transformed equation. For initial data $u_0 \in H^s(\mathbb{T})$ with $1/2 < s \le 5/2$, we prove that the numerical error is of order $\mathcal{O}(\tau^{s/2-1/4})$  in $H^{1/2}(\mathbb{T})$, where $\tau$ denotes the employed time step size. Numerical experiments confirm the predicted convergence behavior and demonstrate the effectiveness of the filtered scheme for rough solutions.
	\end{abstract}
	
	\maketitle
	
	%%%%%%%%%%%%%%%%%%%%%%%%%%%
	\section{Introduction}
	%%%%%%%%%%%%%%%%%%%%%%%%%%%
	In recent years, the low-regularity analysis of numerical methods for dispersive equations has developed rapidly \cite{ostfocm,caolly,caonew,lima2022ns,lischratzzivcovich2023kg,liwunls,WuandYao2022,feng2025explicit,feng_maierhofer_schratz_2023,fengschratz2024sg}, with most results devoted to semilinear equations whose nonlinearities do not contain derivatives. For some derivative models, such as KdV equation \cite{Roupaa,liwu2026kdv}, the apparent derivative loss can still be compensated by refined phase expansions and resonance structures. However, the derivative nonlinear Schr\"odinger equation has a different structure: the derivative loss in the original variable cannot be compensated by resonance-based arguments in the low-regularity regime. This makes gauge transformations essential for rough data, as they recast the equation into a form with a more favorable multilinear structure and thereby allow Bourgain-space estimates to be applied. The purpose of this work is to address this issue and to develop an explicit filtered scheme suitable for rough solutions of the derivative nonlinear Schr\"odinger (dNLS) equation on the one-dimensional torus $\mathbb{T}=\mathbb{R}/2\pi\mathbb{Z}$:
	\begin{equation}\label{dnls}
		\left\{
		\begin{aligned}
			&\partial_t u -i \partial_x^2 u = \lambda \partial_x\bigl(|u|^2 u\bigr), \quad t>0, \ x \in \mathbb{T},\\
			&u(0,x) = u_0(x),
		\end{aligned}
		\right.
	\end{equation}
	where $u(t,x):[0,T]\times\mathbb{T}\to\mathbb{C}$ denotes the complex-valued  field. Throughout the paper, we assume without loss of generality that $\lambda=1$, since the Cauchy problem~\eqref{dnls} can be reduced to this case by the transformation
	\[
	u(t,x)\mapsto \frac{1}{\sqrt{|\lambda|}}\,u\bigl(t,\operatorname{sign}(\lambda)x\bigr), \qquad \lambda\in \mathbb{R} \backslash \{0\}.
	\]
	The dNLS equation was introduced by Mj{\o}lhus~\cite{mjolhus1976modulational} as a model for the propagation of Alfv\'en waves~\cite{mjolhus1989nonlinear} in magnetized plasmas~\cite{mio1976modified}, and it has since found broad applications in several areas of physics, including nonlinear optics and water-wave theory~\cite{Dysthe1979-waterNLS,mosesself2007}. From the analytical point of view, it is considerably more delicate than the standard cubic nonlinear Schr\"odinger (NLS) equation, since the derivative affects directly the cubic nonlinearity and thus gives rise to a more singular multilinear structure, especially in low-regularity regimes where classical energy estimates are no longer sufficient. 
	
	The derivative in the nonlinearity also fundamentally changes the structural features of dNLS~\eqref{dnls} compared with the classical cubic NLS. Although the mass
	\begin{equation}\label{Eqn:mass}
		M(u)=\int_{\mathbb{T}} |u|^2 \dd x
	\end{equation}
	remains conserved, the classical energy structure is substantially modified. For sufficiently smooth solutions $u\in C([0,T],H^3(\mathbb{T}))\cap C^1([0,T],H^1(\mathbb{T}))$, the conserved energy is given by
	\begin{equation}\label{Eqn:energy}
		E(u)=\int_{\mathbb{T}} |u_x|^2\dd x+\frac{3}{2}\Imag\int_{\mathbb{T}} |u|^2u\overline{u}_x\dd x+\frac{1}{2}\int_{\mathbb{T}} |u|^6\dd x,
	\end{equation}
	which contains both a derivative quartic term and a sextic term, and is not sign-definite. Moreover, the presence of $\partial_x(|u|^2u)$ destroys the classical Galilean invariance and leads to resonance mechanisms that are absent in the standard cubic NLS. These structural differences make dNLS substantially more difficult to analyze, both at the PDE level and for the design of numerical methods~\cite{bahouri2022global,colliander2001global,colliander2002refined,deng2021optimal,li2018numerical,wu2014global,wu2015global}.
	
	A major breakthrough in the analysis of dNLS came from the introduction of gauge transformations, which recast the equation into a form with a more favorable multilinear structure and better analytical features; see, for instance, \cite{Yang1974-intFormalism,YangMills1954}. In the non-periodic setting, Hayashi and Ozawa~\cite{Hayashi1993,HayashiOzawa1992} introduced such a transformation and thereby opened the way to the sharp local theory established by Takaoka~\cite{Takaoka1999}, who proved local well-posedness of \eqref{dnls} in $H^s(\mathbb{R})$ for $s\ge \frac12$. Although the transformed equation still contains derivative-type nonlinear terms, such as $u^2\partial_x\overline{u}$, it becomes accessible to Bourgain's Fourier restriction norm method \cite{bourgain1993} together with local smoothing and Strichartz estimates. The threshold $s=\frac12$ is sharp, since Biagioni and Linares~\cite{BiagioniLinares} showed that below this regularity the flow map is no longer uniformly continuous. Subsequent progress extended the theory from local to global regimes. In particular, Tao \emph{et al.}~\cite{colliander2001global,colliander2002refined} proved global well-posedness for $s>\frac12$ under a smallness assumption on the $L^2$ norm by means of the $I$-method, while Bahouri and Perelman~\cite{bahouri2022global} later established global well-posedness for arbitrary data in $H^{1/2}(\mathbb{R})$ together with uniform-in-time bounds on the $H^{1/2}$ norm of the solution. Furthermore, the dNLS also possesses a rich class of soliton solutions, which are closely connected with its integrable structure and long-time dynamics~\cite{kaup1978exact,mio1976modified,mjolhus1989nonlinear,mjolhus1986alfven}.
	
	However, the analysis becomes more challenging in the periodic setting because one no longer has the local smoothing available on $\mathbb{R}$ and the periodic Strichartz estimates exhibit derivative loss beyond the $L^4$ level. In order to treat the derivative nonlinearity within the framework of periodic Bourgain spaces, it is essential to apply a gauge transformation $v=\mathcal{G}(u)$ (see Definition~\ref{def:gauge_trafo}), through which the problematic term $|u|^2\partial_x u$ in the nonlinearity is rewritten in a more favorable form involving $v^2\partial_x\overline{v}$. The fact that the derivative falls on the complex conjugate is crucial for the resonance analysis and ultimately makes it possible to establish the required trilinear estimates. This approach was carried out by Herr~\cite{Herr2006}, who proved a sharp trilinear estimate on the torus by combining refined multiplier estimates with a periodic $L^4$ Strichartz estimate. Together with the inverse transformation, this yields local well-posedness in $H^s(\mathbb{T})$ for $s\ge \frac12$, as well as global well-posedness in $H^1(\mathbb{T})$ for small $L^2$ data. More recently, Deng \textit{et al.}~\cite{deng2021optimal} established optimal local well-posedness for the $L^2$-critical equation in Fourier--Lebesgue spaces whose scaling corresponds to $H^s(\mathbb{T})$ with $0<s<\frac12$. By comparison, the numerical analysis of \eqref{dnls} remains rather limited. The presence of the derivative in the nonlinearity makes the construction and analysis of particular schemes difficult, while the available convergence theory has so far been largely restricted to smooth solutions. Existing works by Li \textit{et al.}~\cite{li2018numerical} developed implicit finite difference methods for \eqref{dnls}, but their convergence analysis relies on high-regularity assumptions. Consequently, there remains a substantial gap between the low-regularity well-posedness theory and the rigorous numerical approximation of \eqref{dnls}, and in particular the design of efficient explicit integrators for rough initial data is still open.
	
	In this paper, we introduce a \emph{discrete Bourgain framework} to analyze the error of the filtered exponential integrator \eqref{filteredsch}. The discrete Bourgain spaces, initially developed by Ostermann \emph{et al.}~\cite{Ostjems} for the nonlinear Schrödinger equation, have since been extended to a broader class of dispersive equations~\cite{Jimcom,JiZhao,Roupaa} and to higher-dimensional settings~\cite{Jisiam,Jiima}. By incorporating a temporal regularity parameter~$b$, it enables the effective exploitation of the interaction between spatial and temporal frequencies, thus capturing resonances with greater precision. Our key innovations include:
	\begin{itemize}
		\item[(i)] We develop a discrete Bourgain framework adapted to \eqref{dnls}, within which we establish multilinear estimates under low-regularity assumptions.
		\item[(ii)] We construct a filtered exponential integrator tailored to the gauge-transformed dNLS equation and prove its stability and convergence for rough initial data.
		\item[(iii)] We incorporate resonance analysis into the numerical treatment of derivative-type cubic nonlinearities, which leads to a robust and accurate scheme in low-regularity regimes.
	\end{itemize}
	
	Our main result gives a low-regularity error bound for the scheme \eqref{filteredsch}. Let $v^n$ be the numerical approximation generated by \eqref{filteredsch} for the gauge-transformed variable $v=\mathcal{G}(u)$, see \eqref{Eqn:gauge}. A key feature of this scheme is that it filters out high frequencies from the solution. This is achieved via the projection operator \eqref{Eqn:projection}, which removes all frequencies $k$ satisfying $|k| > K$. The maximum retained frequency $K$ is coupled to the time step size $\tau$ through the CFL-type condition $\tau K^2\le 1$. The main result of this paper is the following theorem.
	
	\begin{theorem}\label{Thm:main}
		Let $u_0\in H^s(\mathbb{T})$ for $s\in(\frac{1}{2},\frac{5}{2}]$, and let $[0,T_{\max})$ be the maximal  interval of existence of the corresponding solution to \eqref{dnls}. Then, for any fixed $T<T_{\max}$, there exists $\tau_0>0$ such that for all $\tau\le \tau_0$, the numerical solution $v^n$ produced by \eqref{filteredsch} satisfies
		\begin{equation}\label{Eqn:main-result}
			\|u(t_n)-\mathcal{G}^{-1}(v^n)\|_{H^{1\over2}}
			\lesssim
			\|v(t_n)-v^n\|_{H^{1\over2}}
			\leq C_T
			\tau^{\frac{s}{2}-\frac14},
			\qquad 0<t_n=n\tau\le T.
		\end{equation}
		where $C_T$ is a constant depending on $u_0$ and $T$.
	\end{theorem}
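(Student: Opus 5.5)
We reduce the statement to an error bound for the gauge-transformed variable $v=\mathcal{G}(u)$. The first inequality in \eqref{Eqn:main-result} will come from the local Lipschitz continuity of the inverse gauge map $\mathcal{G}^{-1}$ on bounded sets of $H^{1/2}(\mathbb{T})$. This map is a multiplication by $\exp(\pm i\,\mathcal{I}(|v|^2))$, where $\mathcal{I}$ is the mean-zero primitive, followed by a spatial translation depending on the mass. Both operations are Lipschitz in $H^{1/2}$, with constants controlled by $\|v\|_{H^{1/2}}$, $\|v^n\|_{H^{1/2}}$ and the conserved mass. So it suffices to prove the second inequality, provided $v^n$ stays bounded in $H^{1/2}$, which the argument below also gives.

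\textbf{Step 1: truncation error.} We split the error through the frequency-truncated model $v_K$, with $K=\tau^{-1/2}$ by the CFL condition. We write $v(t_n)-v^n=(v-v_K)(t_n)+(v_K(t_n)-v^n)$. For the first piece we use the continuous periodic Bourgain theory of Herr. The solution $v$ lies in $X^{s,b}_T$ with $b>1/2$ close to $1/2$, and the trilinear estimate for the gauged nonlinearity (terms like $v^2\partial_x\bar v$ and the quintic term) holds at regularity $1/2$. A difference estimate in $X^{1/2,b}$ then gives $\|v-v_K\|_{L^\infty H^{1/2}}\lesssim K^{-(s-1/2)}=\tau^{s/2-1/4}$. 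This already has the claimed order.

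\textbf{Step 2: local error.} This is the time-discretization error of the filtered exponential-Euler scheme applied to $v_K$. Write it in Duhamel form and freeze the nonlinearity at $t_n$. The defect is then an integral of $e^{i(t_n+\sigma)\partial_x^2}$ acting on $N(v_K(t_n+\sigma))-N(v_K(t_n))$. We estimate it in the discrete Bourgain norm $X^{1/2,b}_\tau$, using the time regularity of $v_K$, namely $\|v_K(t+\sigma)-v_K(t)\|$. Each unit of time regularity costs two spatial derivatives, and the projection with $\tau K^2\le1$ caps the loss. This should give a local error of size $\tau\cdot\tau^{s/2-1/4}$ in the appropriate norm. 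Summed over $n\le T/\tau$ with discrete Duhamel estimates, it yields the global contribution $\tau^{s/2-1/4}$.

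\textbf{Step 3: stability, the main obstacle.} We must prove the discrete trilinear (and quintic) estimates in $X^{1/2,b}_\tau$. The form is
\[
\Bigl\|\tau\sum_{m\le n} e^{i(n-m)\tau\partial_x^2}\Pi_K N(w_m)\Bigr\|_{X^{1/2,b}_\tau}\lesssim T^{\delta}\|w\|_{X^{1/2,b}_\tau}^3,
\]
together with its multilinear difference version. Here we follow Herr's resonance analysis. The derivative falls on $\bar v$, so the resonance function is $2(k-k_1)(k-k_3)$, and this is used to distribute the derivative. The periodic $L^4$ Strichartz estimate is replaced by its discrete analogue for frequency-localized sequences; under $\tau K^2\le1$ this analogue holds without loss, as in Ostermann et al. The delicate points are twofold. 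First, discrete modulations are only defined modulo $2\pi/\tau$, so we must check that the multiplier bounds survive this aliasing using $|k|\le K$. Second, a $T^\delta$ gain is needed from $b<b'$. With these estimates, a standard bootstrap on a short interval gives the result. It shows $\|v^n\|_{H^{1/2}}$ stays bounded and that $\|v_K(t_n)-v^n\|_{H^{1/2}}\lesssim\tau^{s/2-1/4}$. We then iterate over finitely many intervals up to $T$, using the a priori bound for $v$ on $[0,T]$, with $\tau_0$ chosen small enough that the bootstrap closes.
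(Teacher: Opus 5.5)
There is a genuine gap in the function spaces you work in. In Step~1 you take $v\in X^{s,b}$ with $b>\frac12$ and run the difference estimate in $X^{\frac12,b}$. Step~3 is built on $X^{\frac12,b}_\tau$ with a $T^\delta$ gain from $b<b'$, and you need $b>\frac12$ there to read off $\|v^n\|_{H^{1/2}}$.

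For the gauged periodic problem, the trilinear estimate $\|v_1v_2\,\partial_x\overline{v}_3\|_{X^{1/2,b-1}}\lesssim\prod_j\|v_j\|_{X^{1/2,b}}$ fails for every $b>\frac12$. Take $v_1=v_2=\eta(t)$ and $v_3=\eta(t)\mathrm{e}^{iNx-iN^2t}$. The output sits at $k=-N$ with modulation $\sigma+k^2\approx 2N^2$, and the two sides differ by a factor $\sim N\cdot N^{2(b-1)}=N^{2b-1}$. (With $\overline v$ as the third factor, the resonance function is $2(k-k_1)(k-k_2)$, not $2(k-k_1)(k-k_3)$.) The resonance pays for the derivative on $\overline v$ only at exactly $b=\frac12$.

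In the discrete setting, $|k|\le\tau^{-1/2}$ caps the loss at $\tau^{-(b-1/2)}$. But this configuration is precisely the linear error term $(\Pi_\tau v_\tau)^2\partial_x\Pi_\tau\overline{e}^n$, so the loss lands in the Lipschitz constant of your bootstrap. Closing it forces $T_1^\delta\tau^{-(b-1/2)}\ll 1$. That means polynomially many subintervals in $1/\tau$ and an exponential amplification of the error.

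Retreating to $b=\frac12$ with pure $X$-norms does not help either. Then neither $X^{s,1/2}\subset L^\infty_tH^s$ nor boundedness of the Duhamel map $X^{s,-1/2}\to X^{s,1/2}$ holds. Discretely both hold only up to a $\sqrt{\log(1/\tau)}$ loss, which again sits in the Lipschitz constant.

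The missing idea is Herr's critical framework and its discrete counterpart. Work at $b=\frac12$ in $Z^{s,1/2}=X^{s,1/2}\cap Y^{s,0}$ and in $Z_\tau^{s,b}=X_\tau^{s,b}\cap Y_\tau^{s,b-1/2}$, where $Y$ carries an $L^1$ norm in the time frequency. The $Y$-part gives the embedding \eqref{eq:linfty_embdis} and the Duhamel bound \eqref{Eqn:discrete-convolution} from $Z_\tau^{s,-1/2}$ to $Z_\tau^{s,1/2}$. The price is proving the trilinear estimate in $Y_\tau^{1/2,-1}$ (Lemma~\ref{Lem:multiplier1-Y}, Theorem~\ref{Thm:multiplier1-Y}) as well as in $X_\tau^{1/2,-1/2}$ (Theorem~\ref{Thm:multiplier1}). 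The $T_1^\varepsilon$ gain then comes via \eqref{Eqn:T-inverse} from slack inside the multilinear bounds: some factors are only needed in $X_\tau^{1/2,7/16}$, $X_\tau^{1/2,3/8}$ or $X_\tau^{1/2,23/48}$.

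Accordingly, Step~1 has to be done in $Z^{1/2,1/2}$ (Proposition~\ref{diffpj}), and the local error must be measured in $Z_\tau^{1/2,-1/2}$. The paper does this with \eqref{eq:XtautoZtao}, the bound $\|v_\tau\|_{X^{s,1}(T)}\lesssim\tau^{-1/2}$ of Proposition~\ref{highregbound}, and a split into $s\in(\frac12,\frac32]$, $(\frac32,2]$, $(2,\frac52]$.

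Two smaller omissions:
\begin{enumerate}
\item The nonlocal term $i\psi(v)v$ needs its own Lipschitz bound \eqref{eq:gen_psi_est}. That bound uses $\ell^4_\tau$ control of $\psi$ and $\ell^\infty$ control of the mass, again supplied by $Z_\tau^{0,1/2}$.
\item In the gauge step, the translation must be the same for $v(t_n)$ and $v^n$; the paper uses $\tau_{\mu(v)}$ for both. A mass-dependent shift is not Lipschitz in the mass on $H^{1/2}$.
\end{enumerate}
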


	\subsection*{Outline of the paper.}
	The rest of the paper is organized as follows. Section~\ref{sectionmethod} introduces the gauge transformation for the periodic dNLS equation. Section~\ref{Sec:bourgain} recalls the Bourgain space framework and establishes the corresponding well-posedness and approximation results for the projected problem. Section~\ref{Sec:integrator} introduces the filtered exponential integrator \eqref{filteredsch}. Section~\ref{Sec:discrete-bourgain} develops discrete Bourgain spaces and the discrete multilinear estimates used in the numerical error analysis. The local and global error analyses are carried out in Sections~\ref{sectionlocal} and~\ref{sectionglobal}, respectively. Finally, numerical experiments illustrating the theoretical results are presented in Section~\ref{sectionnumerexp}. The required estimates for the trilinear term with derivative are proved in Appendix~\ref{multilinear}.

	\subsection*{Notations.}
	\begin{itemize}[leftmargin=1em]
		\item For $a, b\ge 0$, we write \( a \lesssim b \) to indicate that there exists a constant \( C > 0 \), independent of \( \tau \in (0,1] \), such that \( a \le Cb \). If the constant depends on a parameter \( \gamma \), we write \( \lesssim_\gamma \). The notation \( a \sim b \) means both \( a \lesssim b \) and \( b \lesssim a \).
		
		\item For $y\in \mathbb{R}$, the Japanese bracket is $\langle y\rangle=(1+|y|^2)^{\frac12}$.
		
		\item For a sequence of functions \( \{u^n\}_{n\in \mathbb{Z}} \), with each \( u^n \) taking values in a Banach space \( B \), we define the following discrete-in-time norms  
		\[
		\|u^n\|_{\ell_{\tau}^p B} = \left( \tau \sum_n \|u^n\|_B^p \right)^{1/p}, 
		\quad 
		\|u^n\|_{\ell^{\infty} B} = \sup_{n \in \mathbb{Z}} \|u^n\|_B.
		\]
		Similarly, for a function \( u(t,x) \in B \), we define the corresponding continuous-in-time norms by  
		\[
		\|u\|_{L^p_tB} = \left( \int_{\mathbb{R}} \|u(t)\|_B^p   \dd t \right)^{1/p}, 
		\quad 
		\|u\|_{L^\infty_t B} = \sup_{t \in \mathbb{R}} \|u(t)\|_B.
		\]
		
		\item For a function $f(x)=\sum\limits_{k\in\mathbb{Z}}\widehat{f}_k\fe^{i  k x}$, we define the operator $\langle \partial_x \rangle^s$ via Fourier multipliers:
		\begin{align*}
			\langle \p_x \rangle^s f=\sum\limits_{k\in\Z}\langle k\rangle^s\widehat{f}_k\fe^{i  k  x },\quad s \in\mathbb{R}.
		\end{align*}
		We define the Sobolev norm for $H^s(\mathbb{T})$ as 
		$$\|f\|_{H^s}^2=2\pi\sum_{k\in \mathbb{Z}}(1+k^2)^s|\widehat{f}_k|^2.$$
		Across this paper, for the sake of brevity, we use $H^s$ to denote the functions defined on $\mathbb{T}$ with the above norm $\| \cdot\|_{H^s}$. Moreover, for $s=0$, the space reduces to $L^2$ and the corresponding norm denoted as $\|\cdot \|_{L^2}$, which is derived from the inner product
		\begin{equation}
			\langle f, g \rangle =2\pi \sum_{k \in \mathbb{Z}} \widehat{f}_k \overline{\widehat{g}}_k = \int_\mathbb{T} f(x) \overline{g(x)} \dd x.
		\end{equation}
		\item For $u \in C([-T,T],L^2)$, we define $\mu(u)=\frac{1}{2\pi}\|u(0)\|^2_{}$.
		
		\item For $\mu \in \R$, we define the translation $\tau_{\mu} u(t,x):=u(t,x+2\mu t)$ for $u \in C([-T,T],L^2)$.
		
		\item \( \eta \colon \mathbb{R} \to [0,1] \) denotes a smooth, nonnegative, and even function that equals 1 on the interval \([-1,1]\) and is supported in \([-2,2]\).

	\end{itemize}

	%%%%%%%%%%%%%%%%%%%%%%%%%%%
	\section{The gauge transformation}\label{sectionmethod}
	%%%%%%%%%%%%%%%%%%%%%%%%%%%
	\begin{definition}[{\cite[Definition~2.1]{Herr2006}}]\label{def:gauge_trafo}
		For $f \in L^2$, we let
		$$\mathcal{G}_0(f)(x)=\e^{-i\mathcal{I}(f)}f(x)
		$$
		where
		$$
		\mathcal{I}(f)(x)=\frac{1}{2\pi}\int_0^{2\pi}\int_{\xi}^x |f(y)|^2
		-\frac{1}{2\pi} \|f\|_{L^2}^2 \dd y \dd \xi.
		$$
		Then, for $u \in C([0,T],L^2)$, we can define the gauge transformation
		\begin{equation}\label{Eqn:gauge}
			\mathcal{G}(u)(t,x)=\tau_{-\mu(u)}\mathcal{G}_0(u(t))(x).
		\end{equation}
	\end{definition}
	Since $|f(y)|^2 - \frac{1}{2\pi} \|f\|_{}^2$ is a zero-mean function on $\mathbb{T}$, its primitive $\mathcal{I}(f)(x)$ is $2\pi$-periodic. As a consequence, the gauge-transformed function $\mathcal{G}(f)(x)= \e^{-i \mathcal{I}(f)(x)} f(x)$ is also $2\pi$-periodic.
	
	The gauge transformation introduced above plays a central role in the analysis of periodic dNLS. Its main purpose is to rewrite the derivative nonlinearity in a more favorable form, in which the most troublesome derivative interaction can be handled more effectively within the Bourgain space framework. Before deriving the transformed equation, we first record the basic mapping properties of $\mathcal{G}$, which show that the transformation is well defined, invertible, and stable in Sobolev spaces.
	\begin{lemma}[{\cite[Lemma~2.3]{Herr2006}}]\label{lem:gauge_trafo_est}
		For $s \geq 0$ the map
		$$\mathcal{G}: C([0,T],H^s)\to
		C([0,T],H^s)$$
		is a homeomorphism, and its inverse is given by
		$$
		\mathcal{G}^{-1}(v)(t,x)=\mathcal{G}^{-1}_0 ( \tau_{\mu(v)} v(t)) (x)  = \e^{i\mathcal{I}(\tau_{\mu(v)}v)}\tau_{\mu(v)} v(t,x).
		$$
		Moreover, the map $\mathcal{G}_0$ is locally Lipschitz continuous on $H^s$:
		\begin{equation}\label{eq:gauge_trafo_est}
			\|\mathcal{G}_0(f_1) -\mathcal{G}_0(f_2) \|_{H^s}\leq C
			\|f_1 - f_2 \|_{H^s}, 
		\end{equation}
		where $C$ depends on $\|f_1\|_{H^{s}}$ and $\|f_2\|_{H^{s}}$.
		Furthermore, the inverse map $\mathcal{G}_0^{-1}$ is also locally Lipschitz continuous on $H^s$.
	\end{lemma}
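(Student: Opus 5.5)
The plan is to treat the statement in two layers: first the fixed-time map $\mathcal{G}_0$ on $H^s$, then the time-dependent map $\mathcal{G}$ on $C([0,T],H^s)$.

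\textbf{Step 1: $\mathcal{G}_0$ is locally Lipschitz on $H^s$.} For $f\in L^2$, the function $g_f:=|f|^2-\frac{1}{2\pi}\|f\|_{L^2}^2$ has zero mean. Hence $\mathcal{I}(f)$ is the zero-mean primitive of $g_f$, namely $\widehat{\mathcal{I}(f)}_k=\widehat{(g_f)}_k/(ik)$ for $k\neq0$ and $\widehat{\mathcal{I}(f)}_0=0$. This gives
\[
\|\mathcal{I}(f)\|_{H^{\sigma+1}}\lesssim\|g_f\|_{H^\sigma}.
\]
In particular, $\mathcal{I}(f)\in H^1\subset L^\infty$ is real-valued, with $\|\mathcal{I}(f)\|_{H^1}\lesssim\|f\|_{L^2}^2$.

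For $s\le 1$, I would bound $\|\e^{-i\mathcal{I}(f)}f\|_{H^s}$ using the fractional Leibniz rule. The factor $\e^{-i\mathcal{I}(f)}$ belongs to $H^1\cap L^\infty$ with bounds depending only on $\|f\|_{L^2}$, which gives $\|\mathcal{G}_0(f)\|_{H^s}\le C(\|f\|_{L^2})\|f\|_{H^s}$.

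For larger $s$, I would induct in steps of one derivative. The key identity is
\[
\partial_x\mathcal{G}_0(f)=\e^{-i\mathcal{I}(f)}\bigl(\partial_x f - i g_f f\bigr).
\]
By the algebra property of $H^s$ for $s>1/2$, or Sobolev product estimates at lower $s$, $g_f f$ lies in $H^{s-1}$ with bounds polynomial in $\|f\|_{H^{s-1}}$ and $\|f\|_{H^{\max(s-1,1/2+)}}$.

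For the difference estimate, write
\[
\mathcal{G}_0(f_1)-\mathcal{G}_0(f_2)=\e^{-i\mathcal{I}(f_1)}(f_1-f_2)+\bigl(\e^{-i\mathcal{I}(f_1)}-\e^{-i\mathcal{I}(f_2)}\bigr)f_2 .
\]
For the second term, use $|\e^{ia}-\e^{ib}|\le|a-b|$ together with the same product and Leibniz estimates. The needed bound is
\[
\|\mathcal{I}(f_1)-\mathcal{I}(f_2)\|_{H^{\max(s,1)}}\lesssim(\|f_1\|+\|f_2\|)\|f_1-f_2\|,
\]
with norms at the appropriate level, which follows from $|f_1|^2-|f_2|^2=\mathrm{Re}\bigl((f_1-f_2)(\overline{f_1}+\overline{f_2})\bigr)$.

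\textbf{Step 2: the inverse of $\mathcal{G}_0$.} Since $|\mathcal{G}_0(f)|=|f|$ pointwise, we have $\mathcal{I}(\mathcal{G}_0 f)=\mathcal{I}(f)$. Therefore $\mathcal{G}_0^{-1}(v)=\e^{i\mathcal{I}(v)}v$ is an explicit two-sided inverse, and it is locally Lipschitz by the same argument with the sign of the phase flipped.

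\textbf{Step 3: the time-dependent map $\mathcal{G}$.} Mass is preserved pointwise in modulus, so $\mu(\mathcal{G}(u))=\mu(u)$. The translation $\tau_\mu$ is an isometry on each $H^s$ and is strongly continuous in $t$. Hence $\mathcal{G}$ maps $C([0,T],H^s)$ into itself continuously, with continuity in $t$ coming from Step 1 together with the continuity of translations. Composing with $\tau_{\mu(v)}$ and $\mathcal{G}_0^{-1}$ yields the stated inverse formula. That inverse is also continuous, since $\mu(v)$ depends continuously on $v(0)$ in $L^2$ and $\tau_\mu$ is continuous in $\mu$ for the sup-in-time topology on the compact interval $[0,T]$.

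\textbf{Main obstacle.} I expect the hardest part to be the product estimates in the low range $0\le s<1/2$, where $H^s$ is not an algebra. There I would rely on the fact that the phase $\e^{-i\mathcal{I}(f)}$ always has one full derivative more than $|f|^2\in L^1$. Concretely, I would use $\|\mathcal{I}(f)\|_{W^{1,1}}\lesssim\|f\|_{L^2}^2$ and $W^{1,1}\subset L^\infty$, and then interpolate: multiplication by a function in $L^\infty\cap W^{1,1}$ is bounded on $L^2$, and via $H^1$ it is bounded on $H^s$, $0\le s\le1$, by interpolation. This handles the problem without requiring $f\in L^\infty$.
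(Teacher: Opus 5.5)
The paper does not prove this lemma; it quotes it from Herr, so there is no in-paper argument to compare with. Your architecture is sound: the explicit inverse from $|\mathcal{G}_0(f)|=|f|$, the reduction of $\mathcal{G}$ to $\mathcal{G}_0$ plus translations using $\mu(\mathcal{G}(u))=\mu(u)$, and the one-derivative induction via $\partial_x\mathcal{G}_0(f)=\e^{-i\mathcal{I}(f)}(\partial_x f-ig_f f)$ for $s>1$ all work.

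\textbf{The gap: the base case $0<s<\frac14$.}

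Your claim $\|\mathcal{I}(f)\|_{H^1}\lesssim\|f\|_{L^2}^2$ is false. Since $\partial_x\mathcal{I}(f)=g_f$ and $|\partial_x\e^{-i\mathcal{I}(f)}|=|g_f|$, the phase lies in $H^1$ exactly when $f\in L^4$; for $f\in L^2$ you only have $W^{1,1}$. Consequently your $H^1$-multiplier argument, and the difference bound on $\|\mathcal{I}(f_1)-\mathcal{I}(f_2)\|_{H^1}$, are available only when $H^s\subset L^4$, i.e.\ for $s\ge\frac14$. There the constants depend on $\|f\|_{H^{1/4}}$ rather than $\|f\|_{L^2}$, which the lemma allows.

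Your fallback in the last paragraph also fails. Multiplication by $h\in L^\infty\cap W^{1,1}$ is not bounded on $H^1$: take $u\equiv 1$ and $h\in W^{1,1}\setminus H^1$, e.g.\ $h(x)=|x|^{1/4}$ on $[-\pi,\pi]$. So there is no $H^1$ endpoint to interpolate from. As written, your proof covers $s=0$ and $s\ge\frac14$, but not $0<s<\frac14$. At $s=0$ the bound $\|\mathcal{I}(f_1)-\mathcal{I}(f_2)\|_{L^\infty}\lesssim(\|f_1\|_{L^2}+\|f_2\|_{L^2})\|f_1-f_2\|_{L^2}$ alone suffices.

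\textbf{How to close it.} Use the $H^s$ regularity of $f$, on which the lemma lets the constant depend. For $0<s<\frac14$, Sobolev gives $H^s\subset L^p$ with $\frac1p=\frac12-s$. Hence $g_f\in L^q$ with $\frac1q=\frac2p=1-2s$ and $\|g_f\|_{L^q}\lesssim\|f\|_{H^s}^2$. The dual embedding $L^q\subset H^{2s-\frac12}$ then gives
\[
\|\e^{-i\mathcal{I}(f)}\|_{H^{\frac12+2s}}\lesssim 1+\|g_f\|_{L^q}\lesssim 1+\|f\|_{H^s}^2 .
\]
Since $\frac12+2s>\frac12$ and $s\le\frac12+2s$, multiplication by elements of $H^{\frac12+2s}$ is bounded on $H^s$. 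This gives $\|\mathcal{G}_0(f)\|_{H^s}\le C(\|f\|_{H^s})\|f\|_{H^s}$.

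For the difference, set $h_j=\e^{-i\mathcal{I}(f_j)}$ and write
\[
\partial_x(h_1-h_2)=-i(g_{f_1}-g_{f_2})h_1-ig_{f_2}(h_1-h_2).
\]
Combine $\|\,|f_1|^2-|f_2|^2\,\|_{L^q}\le\|f_1-f_2\|_{L^p}\|\,|f_1|+|f_2|\,\|_{L^p}$ with your $L^\infty$ bound on $\mathcal{I}(f_1)-\mathcal{I}(f_2)$. This bounds $\|h_1-h_2\|_{H^{\frac12+2s}}$ by $C\|f_1-f_2\|_{H^s}$, and the product estimate then handles $(h_1-h_2)f_2$.

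Alternatively, you can estimate the Gagliardo seminorm of $hf$ directly from $|h(x)-h(y)|\le\min\{2,\|g_f\|_{L^q}|x-y|^{2s}\}$, which follows from H\"older. With this base case in place, the rest of your argument goes through unchanged.
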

	
	Based on the stability and invertibility of the gauge transformation, the original problem can be reformulated in an equivalent form for the transformed variable $v=\mathcal{G}(u)$.
	
	\begin{lemma}[{\cite[Theorem~1.1]{Herr2006}}]\label{eq-gaugelemma}
		For $u_0 \in H^s$ with $s\geq \frac{1}{2}$, the dNLS equation \eqref{dnls} admits a unique solution $u\in C([0,T];H^s)$ for some $T>0$, and its transformed variable $v=\mathcal{G}(u)$ satisfies the following problem
		\begin{equation}\label{gdnls}
			\left\{
			\begin{aligned}
				&\partial_t v-i\partial^2_{x}v=F(v) \quad\text{in}~ [0, T]\times \T,\\
				&v(0)=\mathcal{G}(u_0)=v_0,
			\end{aligned}
			\right.
		\end{equation}
		where
		\begin{align}
			F(v)&=-v^2\overline{v}_x+\frac{i}{2}|v|^4v-i\mu|v|^2v+i\psi(v)v, \label{Eqn:def-Fv}\\
			\psi(v)&=\frac{1}{2\pi}\int_0^{2\pi}2\Imag (\overline{v}_x v)(t,\theta)-\frac{1}{2}|v|^4(t,\theta)\dd\theta+\mu^2 \quad \text{with}~ \mu=\frac{1}{2\pi}\|v(0)\|^2. \label{Eqn:def:psiv}
		\end{align}
	\end{lemma}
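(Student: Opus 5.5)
The statement to prove is the reformulation in Lemma~\ref{eq-gaugelemma}. I would first prove it for smooth solutions and then pass to $H^s$ with $s\ge\frac12$ by density.

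\textbf{Smooth case.} Take $u$ smooth, say $u\in C([0,T],H^3)\cap C^1([0,T],H^1)$, solving \eqref{dnls} with $\lambda=1$. Mass conservation \eqref{Eqn:mass} makes $\mu=\frac{1}{2\pi}\|u(t)\|_{L^2}^2$ constant in time. Set $w=\mathcal{G}_0(u)=\e^{-i\mathcal{I}(u)}u$, and let $\Phi=\mathcal{I}(u)$, so that $\partial_x\Phi=|u|^2-\mu$.

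\textbf{Time derivative of the phase.} I compute $\partial_t\Phi$ from
\[
\partial_t|u|^2=2\,\mathrm{Re}(\bar u\,\partial_t u)=\partial_x\bigl(-2\,\Imag(\bar u u_x)+\tfrac32|u|^4\bigr).
\]
Integrating in $x$ and subtracting the mean coming from the $\xi$-average in the definition of $\mathcal{I}$ gives
\[
\partial_t\Phi=-2\,\Imag(\bar u u_x)+\tfrac32|u|^4-c(t),
\]
where $c(t)$ is the spatial mean of the right-hand side.

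\textbf{Derivatives of $w$.} Using $u=\e^{i\Phi}w$, $u_x=\e^{i\Phi}(w_x+i(|w|^2-\mu)w)$ and $|u|=|w|$, I compute $w_t$, $w_x$ and $w_{xx}$. In the expression $w_t-iw_{xx}$:
\begin{itemize}
\item the cubic-derivative terms $2|w|^2w_x+w^2\bar w_x$ combine with the terms produced by $\Phi_t$ and $\Phi_{xx}$ into $-w^2\bar w_x-2i\mu w_x$ plus lower-order terms;
\item the quintic terms collect to $\frac i2|w|^4w$;
\item the cubic terms collect to $-i\mu|w|^2w$, together with $\mu^2$ contributions;
\item the spatial constants give $i\psi(w)w$ after rewriting $c(t)$ in terms of $w$, using $\Imag(\bar u u_x)=\Imag(\bar w w_x)+|w|^2(|w|^2-\mu)$.
\end{itemize}

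\textbf{Removing the transport term.} The leftover term $-2i\mu w_x$ (up to the sign conventions) is removed by the translation $\tau_{-\mu}$. Indeed, $v(t,x)=w(t,x-2\mu t)$ satisfies $v_t=w_t-2\mu w_x$, which cancels this transport term. This is exactly why $\mathcal{G}$ includes $\tau_{-\mu(u)}$. The result is $\partial_t v-i\partial_x^2v=F(v)$ with $F$ and $\psi$ as in \eqref{Eqn:def-Fv}--\eqref{Eqn:def:psiv}.

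\textbf{Main obstacle: bookkeeping.} The hardest part is purely algebraic. One must track the sign conventions, the factors of $2$ in the translation speed $2\mu$, and the precise constant in $\psi$, especially the $+\mu^2$ term and the $-\frac12|v|^4$ term. Both arise from combining $\mu(|w|^2-\mu)$ terms with the mean $c(t)$. I would check this constant by testing on plane waves $u=a\e^{i(kx+\omega t)}$, where $|u|$ is constant and every term is explicit.

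\textbf{Low-regularity case.} For $u_0\in H^s$ with $s\ge\frac12$, existence and uniqueness of $u\in C([0,T];H^s)$ are taken from Herr's local theory, which works on the gauged side via the trilinear estimate. For smooth approximations $u_0^{(m)}\to u_0$ in $H^s$, the corresponding gauged solutions satisfy \eqref{gdnls}. Lemma~\ref{lem:gauge_trafo_est} gives $\mathcal{G}(u^{(m)})\to\mathcal{G}(u)$ in $C([0,T],H^s)$, by local Lipschitz continuity together with continuity of the translation. Since $s\ge\frac12$, the nonlinearity $F(v)$ converges in the distributional sense: $v^2\bar v_x$ makes sense as a distribution because $H^{1/2}$ products with $H^{-1/2}$ are controlled in the Bourgain framework. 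Passing to the limit in the Duhamel formulation then shows that $v$ solves \eqref{gdnls}.
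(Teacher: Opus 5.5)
The paper gives no argument here. The lemma is quoted from Herr's Theorem~1.1, where the gauged equation is derived for smooth solutions and the rough-data theory is then built directly on the gauged side. Your smooth computation is that standard derivation and is right in substance, but it has a slip in exactly the bookkeeping you flag. Carrying it out gives
\[
w_t-iw_{xx}=-w^2\overline{w}_x+2\mu w_x+\tfrac{i}{2}|w|^4w-i\mu|w|^2w+i\psi(w)w,\qquad c(t)=\psi(w)+\mu^2 .
\]
So the leftover term is $+2\mu w_x$, coming from $2\Phi_x w_x$ in $u_{xx}$, with a real coefficient. This is not a sign convention. A term $-2i\mu w_x$ would have real Fourier symbol $2\mu k$; it is not a transport term and no translation removes it. By contrast, $+2\mu w_x$ is cancelled exactly by $v(t,x)=w(t,x-2\mu t)$, as you then use. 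Your plane-wave check also cannot settle the constants you worry about. When $|v|$ is constant, $\frac{i}{2}|v|^4v$ cancels the $-\frac12|v|^4$ part of $i\psi(v)v$, and $-i\mu|v|^2v$ cancels its $\mu^2$ part, so only combined coefficients are tested.

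The genuine gap is the limit passage at the endpoint $s=\frac12$, which the statement includes. Lemma~\ref{lem:gauge_trafo_est} only gives $\mathcal{G}(u^{(m)})\to\mathcal{G}(u)$ in $C([0,T],H^{1/2})$. But $v\mapsto v^2\overline{v}_x$ is neither defined nor continuous from $H^{1/2}$ into distributions, since $H^{1/2}(\mathbb{T})$ is not an algebra. For instance, take $\widehat{v}(k)=k^{-1}(\log k)^{-\alpha}$ for $k\ge 2$ and $0$ otherwise, with $\frac12<\alpha\le\frac23$. Then $v\in H^{1/2}$. Yet for the truncations $v_N$ to $|k|\le N$, the quantity $\int_{\mathbb{T}}v_N^2\,\overline{\partial_x v_N}\dd x$ is $-2\pi i$ times a partial sum of the divergent positive series $\sum_{k_1,k_2\ge2}\widehat{v}(k_1)\widehat{v}(k_2)(k_1+k_2)\widehat{v}(k_1+k_2)$.

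The trilinear estimate of Lemma~\ref{multilinearcontinuous} is what gives $v^2\overline{v}_x$ a meaning, but only under convergence in $X^{1/2,1/2}$ on $[0,T]$. That convergence does not follow from Lemma~\ref{lem:gauge_trafo_est}. It is the Lipschitz dependence of the fixed point of the gauged Duhamel map in $Z^{s,\frac12}(T)$, which is Herr's theorem itself. There, the notion of solution at $s=\frac12$ and its uniqueness class are defined through $\mathcal{G}(u)\in Z^{s,\frac12}(T)$. So at the endpoint your density step reduces to the same citation the paper makes.

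For $s>\frac12$ your argument works as written. $H^s$ is an algebra and $H^s\cdot H^{s-1}\subset H^{s-1}$, so $F$ is continuous from $C([0,T],H^s)$ to $C([0,T],H^{s-1})$, with $\psi$ continuous on $H^{1/2}$. You can then pass to the limit in the Duhamel formula. This covers the range $s\in(\frac12,\frac52]$ used in the main theorem.
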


	\section{Bourgain spaces}\label{Sec:bourgain}
	Our approach relies on the well-known Bourgain spaces, which have been extensively utilized in the numerical analysis of dispersive equations \cite{Jimcom,Jisiam,Jiima,JiZhao,Ostjems,Roupaa}.
	
	Let us recall the Bourgain spaces for the Schr\"odinger operator $\p_t-i\p_x^2$. A tempered distribution $v(t,x)$ on $\R\times\T$ belongs to the Bourgain space $X^{s,b}$ if the following norm is finite:
	\begin{equation*}
		\label{eq:X_s,b}
		\|v\|^2_{X^{s,b}}= \sum_{k \in \Z}\int_\R  \lb k \rb^{2s} \lb \sigma +
		k^2\rb^{2b}  | \widetilde{v} (\sigma,k)|^2 \dd \sigma,
	\end{equation*} where $\widetilde{v}$ is the space-time Fourier transform of $v$: 
	$$
	\widetilde{v}(\sigma, k)=\int_{\R\times\T}{\rm e}^{-i\sigma t-ikx}v(t,x)\dd t\dd x.
	$$ 
	%Then we can define $X_{-}^{s,b}$ by replacing $\lb \sigma + k^2\rb$ with $\lb \sigma -k^2\rb$. 
	Furthermore, we define the space $Y^{s,b}$ by using the (stronger) $L^1$ norm for the time-frequency variable $\sigma$
	\begin{equation*}
		\label{eq:Y_s}
		\|v\|^2_{Y^{s,b}}= \sum_{k \in \Z} \left(\int_\R \lb \sigma +
		k^2\rb^{b} \lb k \rb^{s} |\widetilde{v} (\sigma,k)|\dd \sigma\right)^2,
	\end{equation*}
	and we consider the space $Z^{s,b}=X^{s,b} \cap Y^{s,b-\frac{1}{2}}$ with the norm
	$$
	\|v\|_{Z^{s,b}}= \|v\|_{X^{s,b}} + \|v\|_{Y^{s,b-\frac{1}{2}}}. 
	$$
	Next, we give a localized version of the space $Z^{s,b}$. Let $I\subset \R$ be a closed interval. We say that $v\in Z^{s,b}(I)$ if the following norm is finite:
	$$\|v\|_{Z^{s,b}(I)}=\inf\{\|v^{\star}\|_{Z^{s,b}} \mid v^{\star}|_{I}=v, \, v^{\star}\in Z^{s,b}\}.$$
	When $I=[0, T]$, we will often simply use the notation $Z^{s,b}(T)$. 
	
	Now, we start with frequently used lemmas.
	\begin{lemma}[{\cite[Lemma~3.3]{Herr2006}}]\label{sobolevcontinu}
		Let $s\in\mathbb{R}$. Then the following embeddings hold:
		\begin{align*}
			\|v\|_{L_t^p H^s}
			&\lesssim \|v\|_{X^{s,b}},
			\quad 2\le p<\infty, ~ b\ge \tfrac12-\tfrac1p,\\
			\|v\|_{L^{\infty}_t H^s}
			&\lesssim \|v\|_{Z^{s,\frac{1}{2}}}, \\
			\|v\|_{Y^{s,b}} &\lesssim \|  v\|_{X^{s,b^\prime}},\quad b^\prime>b+\tfrac{1}{2}.
		\end{align*}
	\end{lemma}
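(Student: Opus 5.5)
The three embeddings can be handled separately, and all of them reduce to one-dimensional statements in the time variable at each fixed spatial frequency $k$. Write $\widetilde v(\sigma,k)$ for the space-time Fourier transform and substitute $\sigma\mapsto\sigma-k^2$. This amounts to conjugating by the free Schr\"odinger group: set $w(t)=\e^{-it\p_x^2}v(t)$, so that $\widehat{w}_k(t)=\e^{itk^2}\widehat v_k(t)$ and $\lb\sigma+k^2\rb$ becomes $\lb\sigma\rb$ on the Fourier side of $\widehat w_k$. Since $\e^{-it\p_x^2}$ is unitary on every $H^s$, we have $\|v(t)\|_{H^s}=\|w(t)\|_{H^s}$. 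We also have $\|v\|_{X^{s,b}}=\|w\|_{H^b_tH^s_x}$, and $\|v\|_{Y^{s,b}}$ is the $\ell^2_k$ norm of $\lb k\rb^s\|\lb\sigma\rb^b\mathcal F_t\widehat w_k\|_{L^1_\sigma}$. Each claim is therefore a vector-valued (in $k$) version of a scalar fact about functions on $\R$.

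\emph{First estimate.} For $2\le p<\infty$ and $b\ge\frac12-\frac1p$, the one-dimensional Sobolev embedding $H^b(\R)\hookrightarrow L^p(\R)$ gives $\|f\|_{L^p_t}\lesssim\|f\|_{H^b_t}$. We need it for the $\ell^2_k$-valued function $k\mapsto\lb k\rb^s\widehat w_k(t)$. Minkowski's inequality applies since $p\ge2$:
\[
\Bigl\|\Bigl(\sum_k\lb k\rb^{2s}|\widehat w_k(t)|^2\Bigr)^{1/2}\Bigr\|_{L^p_t}\le\Bigl(\sum_k\lb k\rb^{2s}\|\widehat w_k\|_{L^p_t}^2\Bigr)^{1/2}.
\]
Applying the scalar embedding to each $\widehat w_k$ and using Plancherel in $t$ yields $\|v\|_{L^p_tH^s}\lesssim\|v\|_{X^{s,b}}$. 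This step is elementary.

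\emph{Second estimate.} The bound $\sup_t|\widehat w_k(t)|\le\frac1{2\pi}\|\mathcal F_t\widehat w_k\|_{L^1_\sigma}$ holds pointwise in $t$ by Fourier inversion. Multiplying by $\lb k\rb^{s}$, squaring and summing over $k$ gives
\[
\sup_t\|v(t)\|_{H^s}\lesssim\|v\|_{Y^{s,0}}\le\|v\|_{Z^{s,\frac12}},
\]
because $Z^{s,\frac12}=X^{s,\frac12}\cap Y^{s,0}$. The supremum is taken outside the $k$-sum, and that is legitimate since the bound is pointwise in $t$ for each $k$ before summation.

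\emph{Third estimate.} For each $k$, Cauchy--Schwarz in $\sigma$ gives
\[
\int\lb\sigma+k^2\rb^b|\widetilde v(\sigma,k)|\dd\sigma\le\Bigl(\int\lb\sigma+k^2\rb^{-2(b'-b)}\dd\sigma\Bigr)^{1/2}\Bigl(\int\lb\sigma+k^2\rb^{2b'}|\widetilde v|^2\dd\sigma\Bigr)^{1/2}.
\]
The first factor is finite and uniform in $k$ (translate in $\sigma$) exactly when $2(b'-b)>1$, i.e. $b'>b+\frac12$. Multiplying by $\lb k\rb^s$, squaring and summing in $k$ gives $\|v\|_{Y^{s,b}}\lesssim\|v\|_{X^{s,b'}}$.

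None of the steps is a genuine obstacle. The only points needing care are the normalization constants $2\pi$ in the Fourier transform, which are absorbed into $\lesssim$, and the use of Minkowski's inequality in the first estimate, which requires $p\ge2$. That restriction is exactly why $p\ge 2$ appears in the hypothesis.
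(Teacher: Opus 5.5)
Your proof is correct, but it reaches the first estimate by a different route from the one the paper uses for the analogous discrete result.

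The paper does not prove this lemma itself; it quotes it from Herr. Its appendix does prove the discrete counterpart, Lemma~\ref{sobolevdiscrete}, by essentially your scheme: conjugate by the free flow, bound the $L^\infty_tH^s$ norm by Fourier inversion in the $L^1_\sigma$ norm, and obtain the $Y$-embedding by Cauchy--Schwarz in $\sigma$.

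The genuine difference is the $L^p_tH^s$ estimate.

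\begin{itemize}
\item The paper proves an $L^\infty$-in-time bound at $b=\tfrac12+\varepsilon$ (Fourier inversion plus Cauchy--Schwarz), combines it with the trivial $L^2$ bound at $b=0$, and interpolates. This yields only $b>\tfrac12-\tfrac1p$, which is why \eqref{eq:sobdis} has a strict inequality.
\item You apply the sharp one-dimensional embedding $H^{1/2-1/p}(\R)\hookrightarrow L^p(\R)$ at each frequency and recombine with Minkowski's inequality in $L^{p/2}_t$. This reaches the endpoint $b=\tfrac12-\tfrac1p$ that the continuous lemma actually asserts.
\end{itemize}

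The price of your route is that it relies on the Hardy--Littlewood--Sobolev-type sharp embedding instead of a purely elementary interpolation. Your Minkowski reduction to scalar functions is also a clean way to avoid Hilbert-space-valued Sobolev theory.

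One small correction to your closing remark: $p\ge2$ is not needed only for Minkowski. The scalar embedding $H^b(\R)\hookrightarrow L^p(\R)$ already fails for every $b$ when $p<2$; for example, $\langle t\rangle^{-1/p}$ lies in every $H^b$ but not in $L^p$.
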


	\begin{lemma}[\cite{Herr2006,Tao2006}]\label{intoft}
		Let $s\in\R$. We have 
		\begin{align*}
			\|\eta(t){\rm e}^{it\p_x^2}f\|_{Z^{s,\frac{1}{2}}}+\|\eta(t){\rm e}^{it\p_x^2}f\|_{X^{s,1}}&\lesssim_{\eta}\|f\|_{H^s},\quad f\in H^s.
		\end{align*}
		For $\supp(F)\subset \{(t,x)\mid|t|\leq 2\}$, we have
		\begin{align*}
			\bigg\|\eta(t)\int_0^t{\rm e}^{i(t-{\xi})\p_x^2}F({\xi} )\dd {\xi} \bigg\|_{Z^{s,\frac{1}{2}}}&\lesssim_{\eta}\|F\|_{Y^{s,-1}}+\|F\|_{X^{s,-\frac{1}{2}}},\quad F\in Y^{s,-1}\cap X^{s,-\frac{1}{2}}, \\
			\bigg\|\eta(t)\int_0^t{\rm e}^{i(t-{ \xi})\p_x^2}F({\xi} )\dd {\xi} \bigg\|_{X^{s,1}}&\lesssim_{\eta}\|F\|_{X^{s,0}},\quad F\in X^{s,0}.
		\end{align*}
	\end{lemma}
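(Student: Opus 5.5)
The statement immediately above is Lemma~\ref{intoft}, the standard linear estimates in $Z^{s,\frac12}$ and $X^{s,1}$. I would prove it on the Fourier side, treating each spatial frequency $k$ separately. All norms involve only $\langle k\rangle^{s}$ times a weight in $\sigma+k^2$, so the factor $\langle k\rangle^s$ factors out. Conjugating by the free flow (that is, replacing $\widetilde v(\sigma,k)$ by a function of $\lambda=\sigma+k^2$) reduces everything to one-dimensional estimates in time for the scalar functions $t\mapsto \eta(t)$ and $t\mapsto \eta(t)\int_0^t F_k(\xi)\,e^{ik^2\xi}\dd\xi$.

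\textbf{Free part.} For $f\in H^s$ we have $\widetilde{\eta(t)e^{it\p_x^2}f}(\sigma,k)=\widehat\eta(\sigma+k^2)\,\widehat f_k$. Since $\widehat\eta$ is Schwartz, the quantities
\[
\int\langle\lambda\rangle^{2}|\widehat\eta(\lambda)|^2\dd\lambda,\qquad \int\langle\lambda\rangle^{0}|\widehat\eta(\lambda)|\dd\lambda
\]
are finite. These control the $X^{s,1}$ norm, and hence also the $X^{s,\frac12}$ part, as well as the $Y^{s,0}$ part. Summing over $k$ with weight $\langle k\rangle^{2s}|\widehat f_k|^2$ gives the first inequality.

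\textbf{Duhamel part.} Write $G_k(\lambda)=\widetilde F(\lambda-k^2,k)$. For each $k$ the Duhamel term is $e^{-ik^2t}\eta(t)\int_0^t g_k(\xi)\dd\xi$, where $g_k$ has Fourier transform $G_k$. One then has
\[
\int_0^t g_k=\int \frac{e^{it\lambda}-1}{i\lambda}\,G_k(\lambda)\dd\lambda .
\]
I would split this as in Tao's book / Ginibre's argument.

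On $|\lambda|\le1$, expand $e^{it\lambda}-1=\sum_{n\ge1}(it\lambda)^n/n!$. Each term gives $\eta(t)t^n$, which has $H^1_t$ norm and $\widehat{\ }\in L^1$ norm $\lesssim C^n$. The coefficient is bounded by $\int_{|\lambda|\le1}|G_k|$, which is controlled by the $Y^{s,-1}$ norm.

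On $|\lambda|>1$, split further into the $\eta(t)e^{it\lambda}/\lambda$ piece and the $\eta(t)/\lambda$ piece. The latter has coefficient $\int_{|\lambda|>1}|G_k|/|\lambda|$, again bounded by $Y^{s,-1}$. The former equals $\eta\cdot h$ with $\widehat h=G_k/(i\lambda)$ restricted to $|\lambda|>1$. Multiplication by $\eta$ is convolution with $\widehat\eta$, and this is bounded on the weighted spaces $\langle\lambda\rangle^{b}L^2$ for $|b|\le1$ and on $L^1$, by Peetre's inequality $\langle\lambda\rangle^b\lesssim\langle\lambda-\mu\rangle^{|b|}\langle\mu\rangle^b$. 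This yields an $X^{s,\frac12}$ bound by $X^{s,-\frac12}$ and a $Y^{s,0}$ bound by $Y^{s,-1}$.

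For the $X^{s,1}$ estimate, I would instead use the identity $(\p_t-i\p_x^2)\bigl[\eta\int_0^t\cdots\bigr]=\eta F+\eta'\int_0^t\cdots$. Here $\eta' \int_0^t\cdots$ is bounded in $L^2_t$ by $\|F_k\|_{L^2_t}$, using the support assumption $|t|\le2$ and Cauchy--Schwarz. Combined with the $X^{s,0}$ bound on $\eta\int_0^t$ obtained in the same way, this gives the estimate.

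\textbf{Main obstacle.} The delicate point is the $Y^{s,0}$ (i.e.\ $L^1_\sigma$) bound of the Duhamel term. The $X^{s,\frac12}$ estimate alone fails to give $L^\infty_tH^s$ control, so one genuinely needs $Y^{s,-1}$ on the right-hand side and must check carefully that the low-modulation Taylor expansion and the $\eta/\lambda$ term are controlled in $L^1$ rather than $L^2$.
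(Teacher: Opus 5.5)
Your argument is correct and is essentially the standard Ginibre/Tao proof behind the references the paper cites; the paper itself gives no proof and only invokes Herr and Tao. You reduce to the scalar functions $\eta(t)\int_0^t g_k$, Taylor-expand on $|\lambda|\le 1$, and split the high-modulation part into the $e^{it\lambda}/\lambda$ piece (convolution with $\widehat\eta$) and the constant $\eta(t)/\lambda$ piece, which is what forces $Y^{s,-1}$. For the $X^{s,1}$ bound, your identity $(\partial_t-i\partial_x^2)(\eta w)=\eta F+\eta' w$ combined with $\|v\|_{X^{s,1}}\lesssim\|v\|_{X^{s,0}}+\|(\partial_t-i\partial_x^2)v\|_{X^{s,0}}$ is a slightly more direct variant of Tao's Fourier-side argument with $b=1$, and it is fine.
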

	
	Next, we state the multilinear estimates.
	\begin{lemma}[{\cite[Corollary 4.6]{Herr2006}}]\label{multilinearcontinuous}
		Let $s\geq \frac{1}{2}$ and $\varepsilon_0>0$. For $T_1 \in(0,1]$ and $\supp(v_j) \subset \{(t,x) \mid
		|t|\leq 2T_1\}$, $j=1,\ldots, 5$, there exists $\varepsilon>0$ such that 
		\begin{align*}
			\|v_1 v_2 \, \partial_x \overline{v}_3 \|_{Y^{s,-1} \cap X^{s,-\frac{1}{2}}}
			&\lesssim T_1^{\eps}\sum\limits_{k=1}^3\|v_k\|_{X^{s,\frac{1}{2}}}
			\prod_{\genfrac{}{}{0pt}{}{j=1}{j\not=k}}^3\|v_j\|_{X^{\frac{1}{2},\frac{1}{2}}},	\\%\label{congen_tri}\\
			\bigg\|\overline{v}_1\overline{v}_2\prod_{j=3}^5 v_j\bigg\|_{X^{s,-\frac{3}{8}-\eps_0}}&\lesssim T_1^{\eps}
			\sum\limits_{k=1}^5\|v_k\|_{X^{s,\frac{1}{2}}}
			\prod_{\genfrac{}{}{0pt}{}{j=1}{j\not=k}}^5\|v_j\|_{X^{\frac{1}{2},\frac{1}{2}}},	\\%\label{congen_quint_pol_est}\\
			\|\overline{v}_3v_4v_5 \|_{X^{s,-\frac{3}{8}-\eps_0}}
			&\lesssim T_1^{\eps} 
			\sum\limits_{k=3}^5\|v_k\|_{X^{s,\frac{1}{2}}} \prod_{\genfrac{}{}{0pt}{}{j=3}{j\not=k}}^5\|v_j\|_{X^{\frac{1}{2},\frac{1}{2}}},  \\%\label{congen_tri_pol_est}\\
			\|(\psi(v_1)-\psi(v_2))v_3 \|_{X^{s,0}} 
			&\lesssim T_1^{\eps}\Big(1+\|v_1\|_{X^{\frac{1}{2},\frac{1}{2}}\cap Z^{0,\frac{1}{2}}}+\|v_2\|_{X^{\frac{1}{2},\frac{1}{2}}\cap Z^{0,\frac{1}{2}}}\Big)^3 \notag \\
			&\qquad\quad  \times \|v_1-v_2\|_{X^{\frac{1}{2},\frac{1}{2}}\cap Z^{0,\frac{1}{2}}}\|v_3\|_{X^{s,\frac{1}{2}}}, %\label{congen_psi_est}
		\end{align*}
		and 
		\begin{equation}\label{con_psi}
			\|\psi(v)(t)\|_{L^4_t}\leq \|v\|_{L_t^8 H^{\frac{1}{2}}}^2+\|v\|_{L_t^{16} L_x^4}^4\lesssim \|v\|_{X^{\frac{1}{2}, \frac{3}{8}}}^2+\|v\|_{X^{\frac{1}{4}, \frac{7}{16}}}^4.
		\end{equation}
	\end{lemma}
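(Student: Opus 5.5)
The statement immediately above is Lemma~\ref{multilinearcontinuous}, quoted from \cite[Corollary~4.6]{Herr2006}. I would not reprove the trilinear/quintilinear core. Instead I would show how the four estimates and \eqref{con_psi} follow from Herr's multiplier analysis together with a time-localization gain. The plan has three steps: (1) establish the untruncated estimates with exponent $b=\frac12$ on the inputs; (2) extract the factor $T_1^{\eps}$ from the support condition $|t|\le 2T_1$; (3) treat the $\psi$ terms separately, since they are spatial averages times functions.

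\textbf{Step 1: the derivative trilinear term.} This is the main obstacle. Write the space-time Fourier transform of $v_1v_2\partial_x\overline{v}_3$ as a convolution over $k=k_1+k_2-k_3$ and $\sigma=\sigma_1+\sigma_2-\sigma_3$, with multiplier $ik_3$. The resonance identity is
\[
(\sigma+k^2)-(\sigma_1+k_1^2)-(\sigma_2+k_2^2)+(-\sigma_3+k_3^2)\cdot(-1)=2(k_1-k_3)(k_2-k_3)\cdot(\pm1).
\]
So the largest modulation satisfies $\gtrsim |k_1-k_3||k_2-k_3|$. When $|k_3|$ is the largest frequency and is comparable to $|k|$, this modulation gain absorbs the derivative. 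This is exactly why $\partial_x$ on the conjugate is favourable. The dangerous region is $k_1=k_3$ or $k_2=k_3$, where there is no gain. On that region the sum collapses to $\widehat{v_2}\cdot\sum_{k_1}|\widehat{v_1}|^2 k_1$-type terms. These are precisely the terms that the gauge removed through $\psi$ and the $\mu$-translation, so the nonresonant part carries the restriction $k_1,k_2\neq k_3$. After this reduction I would split into cases according to which modulation is maximal. In each case I would dualize, apply Cauchy--Schwarz, and close with the periodic $L^4$ Strichartz estimate $\|v\|_{L^4_{t,x}}\lesssim\|v\|_{X^{0,3/8}}$. This requires only $b=\frac38<\frac12$ on two factors, which leaves room for the time gain. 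The $Y^{s,-1}$ part needs an extra $L^1_\sigma$ bound. I would obtain it by Cauchy--Schwarz in $\sigma$ with weight $\langle\sigma+k^2\rangle^{-1/2-}$, reducing it to an $X^{s,-1/2+}$ bound away from the case where the output modulation is dominant. In that remaining case I would use the output modulation weight $\langle\sigma+k^2\rangle^{-1}$ directly.

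\textbf{Step 2: the polynomial terms.} For the quintic and cubic terms without derivative I would use duality and H\"older in $L^4_{t,x}$ or $L^6$, together with Sobolev embedding $H^{1/2+}\subset L^\infty$ in space on all but the highest-frequency factor. I would then move the $\langle k\rangle^s$ weight onto the largest frequency; this produces the symmetric sums over $k$.

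\textbf{Step 3: the time gain and the $\psi$ terms.} In every case the estimate closes with some slack in $b$: inputs in $X^{\cdot,b'}$ with $b'<\frac12$, or output in $X^{\cdot,-\frac12+}$. The standard localization bound $\|\eta(t/T_1)v\|_{X^{s,b'}}\lesssim T_1^{\frac12-b'-}\|v\|_{X^{s,1/2}}$ then converts this slack into $T_1^{\eps}$. For $\psi$, \eqref{con_psi} follows from H\"older in $t$. One bounds $\int\Imag(\overline{v}_xv)\,d\theta$ by $\|v\|_{H^{1/2}}^2$ using the duality pairing $H^{-1/2}\times H^{1/2}$, and bounds $\int|v|^4$ by $\|v\|_{L^4}^4$; the time embeddings come from Lemma~\ref{sobolevcontinu}. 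The difference estimate is handled by writing $\psi(v_1)-\psi(v_2)$ as a multilinear expression in $v_1-v_2$. Since this difference is a function of $t$ only, its $L^4_t$ norm multiplies $\|v_3\|_{L^4_tH^s}\lesssim\|v_3\|_{X^{s,1/4}}$. Using $\mu$ and the $Z^{0,1/2}$ norm controls the $L^\infty_t L^2$ mass difference, and the extra time integrability gives $T_1^{\eps}$.
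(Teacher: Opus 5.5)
Apart from one step, your outline follows the route of Herr's proof, which the paper only cites and then mirrors in the discrete setting (Lemma~\ref{Lem:multiplier1}, Theorems~\ref{Thm:multiplier1} and~\ref{Thm:multiplier1-Y}): a modulation case split, periodic $L^4$ Strichartz, the $Y$-part by Cauchy--Schwarz in $\sigma$, $T_1^{\eps}$ from slack in $b$, and $\psi$ by H\"older in time. The genuine gap is in Step~1. There you discard the resonant set $k_1=k_3$ or $k_2=k_3$ because these interactions were supposedly ``removed by the gauge through $\psi$ and the $\mu$-translation.'' That reduction is not available. The first estimate concerns the full product $v_1v_2\,\partial_x\overline{v}_3$ for three independent functions. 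The nonlinearity $F(v)$ of Lemma~\ref{eq-gaugelemma} contains the whole term $-v^2\overline{v}_x$, and $\psi$ is estimated separately in the fourth inequality. For independent inputs the resonant part is, up to a diagonal term, $c(t)v_2+c'(t)v_1$, with for example $c(t)=-i\sum_m m\,\widehat{v_1}(t,m)\overline{\widehat{v_3}(t,m)}$. This is not a $\psi$-term of anything, so nothing cancels it. Moreover, the estimate is used for differences, with different functions in the three slots (e.g.\ in Section~\ref{sectionglobal}). As written, your Step~1 therefore proves the bound only for a modified trilinear form.

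The missing idea is that the resonant set needs no modulation gain, precisely because the derivative sits on the conjugate. This, and not removal of resonances, is what the gauge buys. If $k_1=k_3$, then $k=k_2$, so $\langle k\rangle^s=\langle k_2\rangle^s$ and $|k_3|\le\langle k_1\rangle^{1/2}\langle k_3\rangle^{1/2}$. Hence $|c(t)|\le\|v_1(t)\|_{H^{1/2}}\|v_3(t)\|_{H^{1/2}}$, and
\[
\|c\,v_2\|_{Y^{s,-1}\cap X^{s,-\frac12}}\lesssim\|c\,v_2\|_{X^{s,0}}\le\|c\|_{L^4_t}\|v_2\|_{L^4_tH^s}\lesssim\|v_1\|_{L^8_tH^{\frac12}}\|v_3\|_{L^8_tH^{\frac12}}\|v_2\|_{L^4_tH^s},
\]
which closes with $T_1^{\eps}$ by Lemma~\ref{sobolevcontinu} and time localization. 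In multiplier language, the resonant configurations fall into the bounded regions of Lemma~\ref{Lem:multiplier1} (the $N$ region and the subcases of (3)--(4)). Had the derivative been on $v_1$, the resonant multiplier would be $|k_1|/\langle k_2\rangle$, which is unbounded; that is the obstruction the gauge removes. Your description of this region as ``dangerous, with no gain'' therefore misidentifies the mechanism.

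A smaller point concerns Step~2. At $s=\frac12$ the non-top factors lie only in $X^{\frac12,\frac12}$, so $H^{\frac12+}\subset L^\infty$ is unavailable. Use finite exponents instead: after the dual $L^4$ bound, estimate the product in $L^{4/3}_{t,x}$ by $\|\langle\partial_x\rangle^s v_k\|_{L^2_{t,x}}\prod_{j\neq k}\|v_j\|_{L^{16}_{t,x}}$, as in the proof of \eqref{eq:gen_quint_pol_est}.
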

	By using Lemmas \ref{intoft} and \ref{multilinearcontinuous} for the map
	\begin{align*}
		G_{v_0}^{\delta}(w):=\eta(t){\rm e}^{it\p_x^2}v_0+\eta(t)\int_0^t{\rm e}^{i(t- {\xi})\p_x^2}\eta \bigg(\frac{\xi}{\delta}\bigg)F(w({\xi} ))\dd {\xi} ,
	\end{align*}
	we obtain the well-posedness result in $H^s$ ($s\geq\frac{1}{2}$) for the equation \eqref{gdnls}. %Finally, by Lemma \ref{eq-gaugelemma}, we  can get the well-posedness result in $H^s(s\geq \frac{1}{2})$ for the equation \eqref{dnls}. 

	\section{A filtered exponential integrator for dNLS}\label{Sec:integrator}
	The projected equation plays an important role in the numerical solution of dispersive problems; see, e.g.,~\cite{Jimcom,Jisiam,Jiima,Ostjems,Roupaa}. For the dNLS equation \eqref{gdnls}, the projected version is
	\begin{equation}\label{pjgdnls}
		\left\{
		\begin{aligned}
			&\partial_t v_\tau -i\partial^2_{x}v_\tau=F_\tau(v_\tau)  \quad\text{in } [0, T]\times \T,\\
			&v_\tau(0)=\Pi_{\tau}\mathcal{G}(u_0)=\Pi_{\tau}v_0,
		\end{aligned}
		\right.
	\end{equation}
	where
	\begin{equation}\label{pjFpsi}
		F_\tau(v) =-\Pi_\tau [(\Pi_\tau v)^2\partial_x \Pi_\tau\overline{v}]+\frac{i}{2}\Pi_\tau(|\Pi_\tau v|^4\Pi_\tau v)-i\mu \Pi_\tau(|\Pi_\tau v|^2\Pi_\tau v)+i\psi(\Pi_\tau v)\Pi_\tau v
	\end{equation}
	and $\psi$ is defined in \eqref{Eqn:def:psiv}.
	Here, the projection operator $\Pi_\tau$ for $\tau>0$ is defined by the Fourier multiplier
	\begin{equation}\label{Eqn:projection}
		\Pi_\tau = \chi \bigg(\frac{- i \partial_x}{\tau^{-\frac{1}{2}}}\bigg)  = \overline{\Pi}_\tau,
	\end{equation}
	where $\chi$ is the characteristic function $\chi= {1}_{[-1,1]}$. The projected equation \eqref{pjgdnls} also conserves mass. Taking the $L^2$-inner product of \eqref{pjgdnls} with ${v}_\tau$ and taking the real part, we obtain
	$$ 
	\frac{1}{2} \frac{\mathrm{d}}{\mathrm{d}t} \|v_\tau(t)\|_{L^2}^2 =\mathrm{Re}\, \langle \partial_t v_\tau -i\partial^2_{x}v_\tau,{v}_\tau \rangle= \mathrm{Re}\,\langle F_\tau(v_\tau) ,{v}_\tau \rangle. 
	$$
	Noting that $\Pi_\tau$ is self-adjoint and $v_\tau = \Pi_\tau v_\tau$, we have
	$$
	\begin{aligned}
		\mathrm{Re}\,\langle -\Pi_\tau [(\Pi_\tau v_\tau)^2\partial_x \Pi_\tau\overline{v}_\tau] ,{v}_\tau \rangle &= -\frac{1}{4}\int_{\T} \partial_x (|v_\tau|^4)\dd x =0, \\
		\mathrm{Re}\, \langle i\Pi_\tau(|\Pi_\tau v_\tau|^4\Pi_\tau v_\tau),{v}_\tau \rangle&= \mathrm{Re}\, \int_{\T} i|{v}_\tau|^6\dd x =0, \\
		\mathrm{Re}\,\langle i\mu \Pi_\tau(|\Pi_\tau v_\tau|^2\Pi_\tau v_\tau),{v}_\tau \rangle& = \mathrm{Re}\, \int_{\T} i\mu |v_\tau|^4\dd x =0,\\
		\mathrm{Re}\,\langle i\psi(\Pi_\tau v_\tau)\Pi_\tau v,{v}_\tau \rangle& = \mathrm{Re}\, \int_{\T} i\psi(\Pi_\tau v_\tau) |v_\tau|^2\dd x =0.
	\end{aligned}
	$$
	Therefore, we arrive at $\frac{\mathrm{d}}{\mathrm{d}t} \|v_\tau(t)\|_{L^2}^2 = 0$, which implies 
	\begin{equation}\label{Eqn:mass-vtau}
		\|v_\tau(t)\|_{L^2} = \|v_\tau(0)\|_{L^2},\quad t \in [0,T]. 
	\end{equation}

	Our filtered exponential integrator is constructed by directly applying the exponential Euler method (see \cite{Hochacta}) to the projected dNLS equation \eqref{pjgdnls}. To this end, we first write Duhamel's formula of \eqref{pjgdnls} as follows
	\begin{align*}
		v_{\tau}(t_n+\tau)={\rm e}^{i\tau\p_x^2}v_{\tau}(t_n)+\int_0^\tau{\rm e}^{i(\tau-{\xi} )\p_x^2}F_\tau(v_{\tau}(t_n+{ \xi} ))\dd {\xi} .
	\end{align*}
	Approximating $v_{\tau}(t_n+\xi)$ by $v_{\tau}(t_n)$ in the nonlinear term leads to the explicit exponential Euler integrator
	\begin{align}\label{filteredsch}
		v^{n+1}=\Phi^\tau(v^n):={\rm e}^{i\tau\p_x^2}v^n+\tau {\rm e}^{i\tau\p_x^2}\varphi_1(-i\tau\p_x^2)F_\tau(v^n), \quad v^0=\Pi_\tau v_0,
	\end{align}
	where $\varphi_1(z)=\tfrac{{\rm e}^z -1}{z}$. {This is the numerical method we will analyze.} We note that the explicit exponential Euler integrator for \eqref{pjgdnls} can also be viewed as a filtered scheme for \eqref{dnls}.  Additionally, a CFL-type condition $K^2\tau \leq 1$ arises from the projection operator $\Pi_\tau$, where $K$ is the number of Fourier modes in space.

	Note that we will analyze our integrator for the projected equation. Therefore, we need to study the regularity of \eqref{pjgdnls} and estimate the difference between \eqref{pjgdnls} and \eqref{gdnls}. To do this, we require the following two results: Propositions~\ref{diffpj} and~\ref{highregbound}. 
	
	Following the ideas presented in \cite{Jimcom}, by using Lemmas \ref{sobolevcontinu}--\ref{multilinearcontinuous}  and the straightforward estimates
	$$
	\begin{aligned}
		\| (1 - \Pi_{\tau}) f \|_{H^{\frac{1}{2}}}  & \leq \tau^{\frac{s}{2}-\frac{1}{4}}\| f \|_{H^{s_{}}}, \quad \forall f \in H^{s}, \\
		\| (1 - \Pi_{\tau}) f \|_{Z^{\frac{1}{2},b}} & \leq \tau^{\frac{s}{2}-\frac{1}{4}} \|f \|_{Z^{s,b}}, \quad \forall f \in Z^{s,b},
	\end{aligned}
	$$
	we obtain:
	\begin{proposition}\label{diffpj}
		For $v_0\in H^s$ with $s>\frac{1}{2}$, let $[0, T_{max})$ denote the maximal interval of existence of \eqref{dnls}. Then for any $T<T_{max}$, there exists $\tau_0>0$ such that for $\tau\leq \tau_0$, the dNLS equation \eqref{pjgdnls} admits a unique solution $v_\tau\in Z^{s,\frac{1}{2}}(T)\subset  C([0, T], H^s)$ satisfying 
		\begin{align*}
			\|v_\tau\|_{Z^{s,\frac{1}{2}}(T)}&\leq C_T,\\
			\|v-v_\tau\|_{L^{\infty}_t H^{\frac{1}{2}}}\lesssim \|v-v_\tau\|_{Z^{\frac{1}{2},\frac{1}{2}}(T)}& \leq C_T\tau^{\frac{s}{2}-\frac{1}{4}},
		\end{align*}
		where $C_T$ is a constant depending on $v_0$ and $T$.
	\end{proposition}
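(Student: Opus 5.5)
We argue by a continuity/bootstrap scheme on short time intervals. The argument runs on the Duhamel map $G_{v_0}^{\delta}$ built for \eqref{gdnls}, together with its projected analogue for \eqref{pjgdnls}. Since $T<T_{\max}$, the exact gauged solution $v$ satisfies $\|v\|_{L^\infty_tH^s([0,T])}\le R$ for some $R$ depending on $v_0$ and $T$. By the local theory, $v$ also has finite $Z^{s,\frac12}$ norm on every subinterval of a fixed length $\delta=\delta(R)$.

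\textbf{Step 1 (local well-posedness with uniform bounds).} We consider
\[
G^{\delta}_{\tau}(w)=\eta(t)\mathrm{e}^{it\partial_x^2}\Pi_\tau v_0+\eta(t)\int_0^t\mathrm{e}^{i(t-\xi)\partial_x^2}\eta\Big(\frac{\xi}{\delta}\Big)F_\tau(w(\xi))\dd\xi .
\]
Because $\Pi_\tau$ is a Fourier multiplier in $x$ bounded by $1$, it is bounded on every $X^{s,b}$, $Y^{s,b}$ and $Z^{s,b}$ uniformly in $\tau$. Lemmas \ref{intoft} and \ref{multilinearcontinuous} therefore apply verbatim to $F_\tau$, with the factor $T_1^{\varepsilon}=\delta^{\varepsilon}$. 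Choosing $\delta$ small in terms of $\|\Pi_\tau v_0\|_{H^{\frac12}}\le\|v_0\|_{H^{\frac12}}$ makes $G^\delta_\tau$ a contraction on a ball of $Z^{\frac12,\frac12}$. The tame structure of the estimates (only one factor carries $H^s$) then gives persistence of the $Z^{s,\frac12}$ bound on the same interval, with constants independent of $\tau$. The term $\psi$ is handled by the last estimate of Lemma \ref{multilinearcontinuous} together with \eqref{con_psi}.

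\textbf{Step 2 (difference on one interval).} On $[0,\delta]$ we write $v-v_\tau$ via Duhamel as follows:
\[
v-v_\tau=\eta(t)\mathrm{e}^{it\partial_x^2}(1-\Pi_\tau)v_0+\eta(t)\int_0^t\mathrm{e}^{i(t-\xi)\partial_x^2}\eta\big(\tfrac{\xi}{\delta}\big)\big[F(v)-F_\tau(v)+F_\tau(v)-F_\tau(v_\tau)\big]\dd\xi.
\]
The first term is bounded by $\tau^{\frac s2-\frac14}\|v_0\|_{H^s}$, using Lemma \ref{intoft} and the stated estimate for $1-\Pi_\tau$. The consistency term $F(v)-F_\tau(v)$ is a sum of pieces of two kinds. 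In the first kind, one factor is $(1-\Pi_\tau)v$, or the outer projection $1-\Pi_\tau$ is applied to the whole nonlinearity. We apply the multilinear estimates in $Z^{\frac12,\cdot}$ to the pieces of the first kind, placing $(1-\Pi_\tau)v$ in $X^{\frac12,\frac12}$, where it is $\lesssim\tau^{\frac s2-\frac14}\|v\|_{Z^{s,\frac12}}$. For the outer projection we estimate $(1-\Pi_\tau)F(v)$ in $Y^{\frac12,-1}\cap X^{\frac12,-\frac12}$ by $\tau^{\frac s2-\frac14}\|F(v)\|_{Y^{s,-1}\cap X^{s,-\frac12}}$, and then use the $s$-version of Lemma \ref{multilinearcontinuous}. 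This is where $s>\frac12$ is needed and where the $H^s$ regularity of $v$ is used. The stability term $F_\tau(v)-F_\tau(v_\tau)$ is multilinear in the difference, so it contributes $\delta^{\varepsilon}C(R)\|v-v_\tau\|_{Z^{\frac12,\frac12}}$, which can be absorbed. The $\psi$ difference is controlled by the Lipschitz estimate in Lemma \ref{multilinearcontinuous}. This yields $\|v-v_\tau\|_{Z^{\frac12,\frac12}(\delta)}\le C\tau^{\frac s2-\frac14}$.

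\textbf{Step 3 (iteration to $[0,T]$).} We cover $[0,T]$ by $N\sim T/\delta$ intervals. On each interval we restart with data $v(t_j)$ and $v_\tau(t_j)$, and $\delta$ depends only on $H^{\frac12}$ bounds. Those bounds are available for $v$ from the a priori bound $R$. For $v_\tau$ they follow inductively, since $\|v_\tau(t_j)\|_{H^{\frac12}}\le\|v(t_j)\|_{H^{\frac12}}+C_j\tau^{\frac s2-\frac14}\le R+1$ once $\tau\le\tau_0$. The errors accumulate geometrically, giving $C_T=C^{N}$, and the $Z^{s,\frac12}$ bound propagates in the same way. Gluing the local $Z^{s,\frac12}$ pieces with the cutoffs gives the $Z^{s,\frac12}(T)$ bounds, and the embedding $Z^{\frac12,\frac12}\subset L^\infty_tH^{\frac12}$ from Lemma \ref{sobolevcontinu} gives the first inequality of the error estimate.

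\textbf{Main obstacle.} The main obstacle is that $\mu$ and $\psi$ are nonlocal. The constant $\mu$ is fixed by $\|v(0)\|^2$ for $v$ and, in \eqref{pjFpsi}, is inherited from $v$ instead of $\|\Pi_\tau v_0\|^2$. When restarting at $t_j$ we must keep $\mu$ fixed rather than recompute it, and we use mass conservation \eqref{Eqn:mass-vtau} to keep the $L^2$ parts consistent. The Lipschitz estimate for $\psi$ requires the $Z^{0,\frac12}$ norm of the difference, which is dominated by the $Z^{\frac12,\frac12}$ norm we control.
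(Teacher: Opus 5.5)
Your proposal is correct and follows essentially the same route as the paper. The paper only sketches this result, combining the local theory of Lemmas~\ref{sobolevcontinu}--\ref{multilinearcontinuous}, the projection bounds $\|(1-\Pi_\tau)f\|_{Z^{\frac12,b}}\le\tau^{\frac s2-\frac14}\|f\|_{Z^{s,b}}$, and the short-interval iteration of \cite{Jimcom}; your Duhamel splitting into an absorbable stability part and a consistency part (one factor $(1-\Pi_\tau)v$ or the outer $1-\Pi_\tau$), together with the inductive control of $\|v_\tau(t_j)\|_{H^{\frac12}}$ via closeness to $v$ for $\tau\le\tau_0$, is exactly the argument being invoked there.
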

	
	The following proposition concerns the propagation of higher regularity with respect to time, which is essential for the subsequent error analysis.
	\begin{proposition}\label{highregbound}
		For $s>\frac{1}{2}$, $\tau\in(0,1]$ and every fixed $T<T_{max}$, we have the bound
		\begin{align}
			\|v_\tau\|_{X^{s,1}(T)}  \lesssim_{v_0,T} \tau^{-\frac{1}{2}}.
		\end{align}
	\end{proposition}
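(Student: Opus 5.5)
We bound the solution $v_\tau$ of the projected equation \eqref{pjgdnls} in $X^{s,1}$ by a bootstrap on short time intervals, using the $X^{s,1}$ Duhamel estimate of Lemma~\ref{intoft}. That estimate only asks for the nonlinearity in $X^{s,0}=L^2_tH^s$. Since $v_\tau$ is frequency localized to $|k|\le\tau^{-1/2}$, the derivative in the cubic term costs at most a factor $\tau^{-1/2}$ in the $L^2_tH^s$ norm. This is where the loss $\tau^{-1/2}$ comes from.

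\textbf{Step 1: setup.} By Proposition~\ref{diffpj}, $\|v_\tau\|_{Z^{s,\frac12}(T)}\le C_T$, hence $\sup_{t\le T}\|v_\tau(t)\|_{H^s}\le C_T$. Split $[0,T]$ into $N\sim T/\delta$ intervals $[t_j,t_{j+1}]$ of fixed length $\delta\le1$, with $\delta$ depending only on $v_0$ and $T$. On each interval write
\[
v_\tau(t_j+t)=\eta(t)e^{it\partial_x^2}v_\tau(t_j)+\eta(t)\int_0^t e^{i(t-\xi)\partial_x^2}\eta(\xi/\delta)F_\tau(v_\tau)(t_j+\xi)\,\mathrm d\xi,
\]
which holds for $t\in[0,\delta]$ with a suitable extension of $v_\tau$. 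Lemma~\ref{intoft} then gives
\[
\|v_\tau\|_{X^{s,1}([t_j,t_{j+1}])}\lesssim \|v_\tau(t_j)\|_{H^s}+\|\eta(\cdot/\delta)F_\tau(v_\tau)\|_{X^{s,0}}.
\]
Gluing the pieces (the $X^{s,1}$ norms localized to intervals can be patched with smooth cutoffs at a cost depending only on $N$) reduces the claim to showing $\|F_\tau(v_\tau)\|_{L^2([t_j,t_{j+1}];H^s)}\lesssim \tau^{-1/2}$.

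\textbf{Step 2: estimating $F_\tau$ in $L^2_tH^s$.} We use the algebra property of $H^s$ for $s>\frac12$ together with $\|\partial_x\Pi_\tau f\|_{H^s}\le\tau^{-1/2}\|f\|_{H^s}$.
\begin{itemize}
\item Cubic derivative term: $\|\Pi_\tau[(\Pi_\tau v_\tau)^2\partial_x\Pi_\tau\overline{v}_\tau]\|_{H^s}\lesssim \|v_\tau\|_{H^s}^2\,\tau^{-1/2}\|v_\tau\|_{H^s}$.
\item Quintic and cubic power terms: these are bounded by $\|v_\tau\|_{H^s}^5+\mu\|v_\tau\|_{H^s}^3$, with no loss.
\item $\psi$ term: $|\psi(\Pi_\tau v_\tau)|\lesssim \|v_\tau\|_{H^{1/2}}^2+\|v_\tau\|_{L^4}^4+\mu^2$, which is bounded uniformly in time since $s>\frac12$. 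Alternatively one can use \eqref{con_psi}.
\end{itemize}
Integrating over an interval of length $\delta$ gives $\|F_\tau(v_\tau)\|_{L^2H^s}\lesssim_{C_T}\delta^{1/2}(\tau^{-1/2}+1)$. With $\tau\le1$, each piece is then $\lesssim C_T+\tau^{-1/2}\lesssim\tau^{-1/2}$.

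\textbf{Main obstacle.} The delicate point is that the bound must hold uniformly on all of $[0,T]$ and not only locally. Since we already control $\sup_t\|v_\tau\|_{H^s}$ from Proposition~\ref{diffpj}, no bootstrap on the $X^{s,1}$ norm itself is needed. The estimate is linear in the a priori bound, so the issue is purely the patching of localized $X^{s,1}$ norms.

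For the patching, we sum the pieces using a smooth partition of unity $\sum_j\eta_j(t)=1$ on $[0,T]$ and the estimate $\|\eta_j w\|_{X^{s,1}}\lesssim_{\eta_j}\|w\|_{X^{s,1}}$. This gives $\|v_\tau\|_{X^{s,1}(T)}\lesssim N\tau^{-1/2}$, and $N$ depends only on $v_0$ and $T$. If one prefers to avoid gluing, one can instead bound $\|\partial_t v_\tau-i\partial_x^2v_\tau\|_{L^2([0,T];H^s)}$ directly by the same computation. Combined with $v_\tau\in L^2([0,T];H^s)$ and a cutoff extension, this yields the $X^{s,1}(T)$ bound at once, since $\|w\|_{X^{s,1}}\sim\|w\|_{L^2H^s}+\|(\partial_t-i\partial_x^2)w\|_{L^2H^s}$. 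I would adopt this direct route as the cleanest.
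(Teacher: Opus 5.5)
Your proposal is correct. The first route (localized Duhamel formula, the $X^{s,1}$ estimate of Lemma~\ref{intoft}, then gluing finitely many pieces) is essentially the paper's proof. The paper runs it through the localized fixed point $V_\tau$. It bounds the derivative term by $\tau^{-1/2}\|V_\tau\|_{L^6_tH^s}^3\lesssim\tau^{-1/2}\|V_\tau\|_{X^{s,1/3}}^3$ and the $\psi$-term through \eqref{con_psi}. You instead use the uniform bound $\sup_{[0,T]}\|v_\tau(t)\|_{H^s}\le C_T$ coming from the $Z^{s,\frac12}(T)$ bound of Proposition~\ref{diffpj}, together with the pointwise bound on $\psi$. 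That is simpler and suffices. For the partition-of-unity gluing you need overlapping subintervals, but the paper glosses over this as well.

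Your preferred direct route is genuinely different and more elementary. The symbol of $\partial_t-i\partial_x^2$ is $i(\sigma+k^2)$, so $\|w\|_{X^{s,1}}^2\sim\|w\|_{L^2_tH^s}^2+\|(\partial_t-i\partial_x^2)w\|_{L^2_tH^s}^2$. This removes the need for Lemma~\ref{intoft}, the localized fixed-point problem and the gluing. It also shows that the $\tau^{-1/2}$ loss comes from a single source: the factor $\partial_x$ acting on frequencies $|k|\le\tau^{-1/2}$.

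The one point you must make explicit is the extension. It has to be continuous in time across $t=0$ and $t=T$; a sharp cutoff $\mathbf{1}_{[0,T]}v_\tau$ would put Dirac masses into $(\partial_t-i\partial_x^2)w$. One choice is $w=\theta(t)\tilde v(t)$, where
\begin{itemize}
\item $\tilde v=v_\tau$ on $[0,T]$,
\item $\tilde v(t)=e^{it\partial_x^2}v_\tau(0)$ for $t<0$,
\item $\tilde v(t)=e^{i(t-T)\partial_x^2}v_\tau(T)$ for $t>T$,
\end{itemize}
and $\theta$ is smooth, compactly supported, and equal to $1$ on $[0,T]$. Then $(\partial_t-i\partial_x^2)w=\theta'\tilde v+\mathbf{1}_{[0,T]}F_\tau(v_\tau)$, and your $L^2_tH^s$ bounds apply directly. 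This is rigorous because $v_\tau$ takes values in the finite-dimensional range of $\Pi_\tau$ and is therefore a classical $C^1$ solution.

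The paper's localized formulation mirrors the local well-posedness argument. Given the a priori control from Proposition~\ref{diffpj}, however, your direct computation is shorter and gives the bound on all of $[0,T]$ in one step.
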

	\begin{proof}
		For $\delta>0$ small enough, where the smallness depends only on $\|v_\tau\|_{Z^{s,\frac{1}{2}}(T)}$, we consider the localized fixed-point problem
		%	\begin{align*}
			%		V_\tau(t)=\eta(t){\rm e}^{it\p_x^2}\Pi_{\tau}v_0+\eta(t)\int_0^t{\rm e}^{i(t-s)\p_x^2}\eta \bigg(\frac{t}{\delta}\bigg)F_{\tau}(V_{\tau}(s))\dd s,
			%	\end{align*}
		$$
		V_\tau=\eta(t){\rm e}^{it\p_x^2}\Pi_{\tau}v_0+\eta(t)\int_0^t{\rm e}^{i(t-{\xi})\p_x^2}\eta \bigg(\frac{\xi}{\delta}\bigg)F_{\tau}(V_{\tau}({\xi}))\dd \xi.
		$$
		By the local well-posedness theory, this equation admits a unique fixed point $V_\tau\in Z^{s,\frac{1}{2}}$ 
		and the corresponding solution $v_\tau$ of \eqref{pjgdnls} satisfies
		\[
		v_\tau(t)=V_\tau(t), \quad t\in[0,\delta].
		\]
		By applying Lemma~\ref{intoft}, we obtain 
		\begin{align*}
			\|V_\tau\|_{X^{s,1}}&\lesssim \|v_0\|_{H^s}+\|\Pi_\tau [(\Pi_\tau V_\tau)^2\partial_x \Pi_\tau\overline{V}_\tau]\|_{X^{s,0}}+\|\Pi_\tau(|\Pi_\tau V_\tau|^4\Pi_\tau V_\tau)\|_{X^{s,0}}\\
			&\quad+\mu\| \Pi_\tau(|\Pi_\tau V_\tau|^2\Pi_\tau V_\tau)\|_{X^{s,0}}+\|\psi(\Pi_\tau V_\tau)\Pi_\tau V_\tau\|_{X^{s,0}}.
		\end{align*}
		By applying the bilinear estimate $\|fg\|_{H^s}\lesssim \|f\|_{H^s}\|g\|_{H^s}$ and Lemma~\ref{sobolevcontinu}, we arrive at
		\begin{align*}
			& \|\Pi_\tau(|\Pi_\tau V_\tau|^4\Pi_\tau V_\tau)\|_{X^{s,0}}+\mu\| \Pi_\tau(|\Pi_\tau V_\tau|^2\Pi_\tau V_\tau)\|_{X^{s,0}}\\
			&\qquad \lesssim \|V_\tau\|_{L^\infty_t H^s}^5+\mu \|V_\tau\|_{L^\infty_t H^s}^3 
			\lesssim \|V_\tau\|_{Z^{s,\frac{1}{2}}}^5+\mu \|V_\tau\|_{Z^{s,\frac{1}{2}}}^3.
		\end{align*}
		Since $\Pi_\tau$ projects on frequencies $|k|\leq \tau^{-\frac{1}{2}}$, by using H\"older's inequality, Lemma~\ref{sobolevcontinu} and \eqref{con_psi}, we have
		\begin{align*}
			&\|\Pi_\tau [(\Pi_\tau V_\tau)^2\partial_x \Pi_\tau\overline{V}_\tau]\|_{X^{s,0}}+\|\psi(\Pi_\tau V_\tau)\Pi_\tau V_\tau\|_{X^{s,0}}\\
			&\qquad\lesssim \tau^{-\frac{1}{2}}\|\Pi_\tau V_{\tau}\|_{L_t^6 H^s}^3+\|\psi(\Pi_\tau V_\tau)\|_{L_t^4}\|\Pi_\tau V_\tau\|_{L_t^4 H^s}\\
			&\qquad\lesssim \tau^{-\frac{1}{2}}\|V_{\tau}\|_{X^{s,\frac{1}{3}}}^3+(\|V_{\tau}\|_{X^{\frac{1}{2},\frac{3}{8}}}^2+\|V_{\tau}\|_{X^{\frac{1}{4},\frac{7}{16}}}^4)\|V_{\tau}\|_{X^{s,\frac{1}{4}}}.
		\end{align*}
		Finally, we have that
		\begin{align*}
			\|V_\tau\|_{X^{s,1}}&\lesssim \|v_0\|_{H^s}+\|V_\tau\|_{Z^{s,\frac{1}{2}}}^5+ \mu\|V_\tau\|_{Z^{s,\frac{1}{2}}}^3+\tau^{-\frac{1}{2}}\|V_{\tau}\|_{X^{s,\frac{1}{3}}}^3+(\|V_{\tau}\|_{X^{\frac{1}{2},\frac{3}{8}}}^2+\|V_{\tau}\|_{X^{\frac{1}{4},\frac{7}{16}}}^4)\|V_{\tau}\|_{X^{s,\frac{1}{4}}} \notag
			\\&\lesssim \|v_0\|_{H^s}+\|V_\tau\|_{Z^{s,\frac{1}{2}}}^5+ \mu\|V_\tau\|_{Z^{s,\frac{1}{2}}}^3+\tau^{-\frac{1}{2}}\|V_{\tau}\|_{X^{s,\frac{1}{2}}}^3+(\|V_{\tau}\|_{X^{s,\frac{1}{2}}}^2+\|V_{\tau}\|_{X^{s,\frac{1}{2}}}^4)\|V_{\tau}\|_{X^{s,\frac{1}{2}}},
		\end{align*}
		which together with Proposition~\ref{diffpj} implies $\|V_\tau\|_{X^{s,1}} \leq C_T \tau^{-\frac12}$.
		Since $v_\tau$ coincides locally with the fixed point $V_\tau$, iterating this argument over finitely many subintervals covering $[0,T]$ gives
		$$
		\|v_\tau\|_{X^{s,1}(T)}\lesssim_{v_0,T} \tau^{-\frac12},
		$$
		which is the desired result.
	\end{proof}

	\section{Discrete Bourgain spaces}\label{Sec:discrete-bourgain}
	In order to perform error estimates at low regularity, we shall develop the harmonic analysis tools at the discrete level. Definitions and properties of discrete Bourgain spaces were introduced (in the context of the NLS equation) in \cite{Ostjems}. Nevertheless, as in the continuous case, we need additional results in order to handle the dNLS equation. To this end, we shall introduce the discrete counterparts $X^{s,b}_\tau$, $Y^{s,b}_\tau$ and $Z^{s,b}_\tau$ of the spaces $X^{s,b}$, $Y^{s,b}$ and $Z^{s,b}$.
	
	For sequences of functions $\{v^{n}(x)\}_{n \in \mathbb{Z}},$ we define the Fourier transform $\widetilde{v^{n}}(\sigma, k)$ by
	$$
	\mathcal{F}_{\tau,x}(v^n)(\sigma,k)=\widetilde{v^{n}} (\sigma, k)= \tau \sum_{m \in \mathbb{Z}} \widehat{v^{m}}(k) \e^{-i m \tau \sigma}, \quad \widehat{v^{m}}(k)= {1 \over 2\pi} \int_{-\pi}^\pi v^{m}(x) \e^{-i k x}\dd x.
	$$
	Parseval's identity then reads
	$$
	\| \widetilde{v^{n}}\|_{L^2l^2}= \|v^{n}\|_{l^2_{\tau}L^2},
	$$
	where
	$$
	\| \widetilde{v^{n}}\|_{L^2l^2}^2 = \int_{-{\pi \over \tau}}^{\pi\over \tau} \sum_{k \in \mathbb{Z}}
	|\widetilde{v^{n}}(\sigma, k)|^2 \dd \sigma, \quad
	\|v^{n}\|_{l^2_{\tau}L^2}^2 = \tau \sum_{m \in \mathbb{Z}} \int_{-\pi}^\pi  |v^{m}(x)|^2 \dd x.
	$$
	As in \cite{Jimcom}, we define the discrete Bourgain spaces $X^{s,b}_\tau$ for $s\ge 0$, $b\in\mathbb R$, $\tau>0$ by
	\begin{equation}\label{Xtausb}
		\| v^n \|_{X^{s,b}_{\tau}} =  \| \langle k \rangle^s \langle  d_{\tau}( \sigma + k^2)  \rangle^b \widetilde{v^n}(\sigma, k)   \|_{L^2l^2}\sim %\| \lb D_\tau\rb^b\lb \p_x\rb^s({\rm e}^{-in\tau\p_x^2} v^n)\|_{l_\tau^2 L^2}=
		\|{\rm e}^{-in\tau\p_x^2}v^n\|_{H_\tau^b H^s},
	\end{equation}
	where  $d_{\tau}(\sigma)=\frac{\e^{i \tau \sigma} - 1}\tau$. %and {\color{blue} $(D_\tau (v^n))_n=(\tfrac{v^{n+1}-v^n}{\tau})_n$}.
	Note that $d_{\tau}$ is $2\pi/\tau$ periodic and that uniformly in $\tau$, we have $|d_{\tau}(\sigma)| \sim | \sigma |$ for $|\tau \sigma | \leq \pi$. 
	We define $X_{\tau,-}^{s,b}$ by replacing $\langle d_{\tau}( \sigma + k^2)  \rangle$ with $\langle d_{\tau}( \sigma - k^2)  \rangle$ and observe that 
	\begin{equation}\label{Eqn:equivalent-conjugate}
		\|\overline{v}^n\|_{X_\tau^{s,b}} = \|v^n\|_{X_{\tau,-}^{s,b}}.
	\end{equation}
	This relation will be used later. Furthermore, we define the space $Y_{\tau}^{s,b}$ with respect to the norm
	\begin{equation}\label{Ytausb}
		\|v^n\|_{Y_\tau^{s,b}}^2 =\|\lb k\rb^s\lb d_\tau(\sigma+k^2)\rb^b \widetilde{v^n}(\sigma, k) \|^2_{L^1 l^2} =
		\sum_{k\in \Z}\left(\int_{-\frac{\pi}{\tau}}^{\frac{\pi}{\tau}} \lb k\rb^s\lb d_\tau(\sigma+k^2)\rb^b |\widetilde{v^n}(\sigma, k)| \dd \sigma \right)^2
	\end{equation}
	and the space $Z_{\tau}^{s,b} =X_{\tau}^{s,b} \cap Y_{\tau}^{s,b-\frac{1}{2}}$ with the norm
	\begin{equation*}
		\|v^n\|_{Z_\tau^{s,b}} = \|v^n\|_{X_\tau^{s,b}} + \|v^n\|_{Y_\tau^{s,b-\frac{1}{2}}}.
	\end{equation*}
	We can directly obtain from the definition that for $s\geq s^{\prime}$ and $b\geq b^{\prime}$, 
	\begin{align}
		\|v^n\|_{X^{s,b}_\tau} &\lesssim \tau^{b^{\prime}-b}\|v^n\|_{X^{s,b^\prime}_\tau},\label{normdecr-b} \\
		\|\Pi_\tau v^n\|_{X^{s,b}_\tau} &\lesssim \tau^{\frac{s^{\prime}-s}{2}}\|\Pi_\tau v^n\|_{X^{s^\prime,b}_\tau}.\label{normdecr-s} 
	\end{align}
	
	Some useful technical properties are gathered in the following lemma.
	\begin{lemma}\label{discretelinearest}
		For $s\in \mathbb R$ and $\tau\in(0,1]$, we have
		\begin{align}
			\|\eta(\tfrac{n\tau}{T})v^n\|_{X^{s,b^\prime}_\tau}&\lesssim_{\eta, b, b^{\prime}} T^{b-b^\prime}\|v^n\|_{X^{s,b}_\tau}, \quad -\tfrac{1}{2}<b^\prime\leq b<\tfrac{1}{2},\, 0<T=N\tau\leq 1, \, N\geq 1,\label{Eqn:T-inverse}\\
			\sup\limits_{\delta\in[-4,4]}\|{\rm e}^{i\tau\delta\p_x^2}v^n\|_{X^{s,b}_\tau}&\lesssim \|v^n\|_{X^{s,b}_\tau},\quad \sup\limits_{\delta\in[-4,4]}\|{\rm e}^{i\tau\delta\p_x^2}v^n\|_{Z^{s,b}_\tau}\lesssim \|v^n\|_{Z^{s,b}_\tau}, \quad b\in\R. \label{perturb}
		\end{align}
		For sequences $\{v^n\}_{n\in\mathbb{Z}}$ supported on the time grid $n\tau \in [-2, 2]$, we have
		\begin{align}\label{Eqn:discrete-convolution}
			%\|\eta(t){\rm e}^{it\p_x^2}f\|_{Z_\tau^{s,\frac{1}{2}}\cap X_\tau^{s,1}}&\lesssim_{\eta}\|f\|_{H^s},\\
			\|V^n\|_{Z_\tau^{s,\frac{1}{2}}}&\lesssim_{\eta,b}\|v^n\|_{Z_\tau^{s,-\frac{1}{2}}},\quad v^n \in Z_\tau^{s,-\frac{1}{2}},
		\end{align}
		where 
		$$
		V^n(x)= \eta(n\tau)\tau\sum\limits_{m=0}^{n}{\rm e}^{i(n-m)\tau\p_x^2}v^m(x).
		$$
	\end{lemma}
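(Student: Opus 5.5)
The plan is to reduce all three estimates to scalar statements in the time variable by conjugating out the linear flow, and then to import the known one-dimensional results from \cite{Ostjems,Jimcom}. Set $w^n={\rm e}^{-in\tau\p_x^2}v^n$. For each fixed $k$ the weight $\lb d_\tau(\sigma+k^2)\rb$ then turns into $\lb d_\tau(\sigma)\rb$ acting on the $n$-sequence $\widehat{w^n}(k)$, because multiplying by ${\rm e}^{-in\tau k^2}$ on the physical side is a translation by $k^2$ on the $\sigma$-side. This is the equivalence already recorded in \eqref{Xtausb}. Every estimate therefore becomes a statement about scalar sequences in the discrete Sobolev space $H^b_\tau$ (or its $L^1$-in-$\sigma$ analogue), uniform in $k$. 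The weight $\lb k\rb^s$ factors out, which is why $s\in\R$ is arbitrary.

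\emph{Proof of \eqref{Eqn:T-inverse}.} Multiplication by $\eta(n\tau/T)$ commutes with the conjugation. It therefore suffices to prove $\|\eta(\tfrac{n\tau}{T})a_n\|_{H^{b'}_\tau}\lesssim T^{b-b'}\|a_n\|_{H^b_\tau}$ for scalar sequences, with $-\tfrac12<b'\le b<\tfrac12$. This is the discrete analogue of the standard localization lemma (Tao, Lemma 2.11), and I will follow its proof.
\begin{itemize}
\item First handle $b'=0\le b$. Use the discrete Sobolev/Hölder embedding $\ell^2\subset$ and the bound $\|\eta(n\tau/T)\|_{\ell^{p}_\tau}\sim T^{1/p}$, with $\tfrac1p=b$, together with the discrete embedding $H^b_\tau\hookrightarrow\ell^{q}_\tau$, $\tfrac1q=\tfrac12-b$. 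The latter follows from the continuous one via $|d_\tau(\sigma)|\sim|\sigma|$ on $[-\pi/\tau,\pi/\tau]$; the condition $T=N\tau$ with $N\ge1$ guarantees that the sum of the cutoff behaves like the integral.
\item The case $b\le0$ with $b'=b$ follows by duality.
\item Cases with mixed signs are obtained by composing the two.
\item The remaining cases with $0\le b'\le b$ are obtained by interpolating the $b'=b$ bound (boundedness of multiplication by a smooth discrete cutoff on $H^b_\tau$, $|b|<\tfrac12$) with the $b'=0$ bound.
\end{itemize}

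\emph{Proof of \eqref{perturb}.} The operator ${\rm e}^{i\tau\delta\p_x^2}$ is a Fourier multiplier in $x$ only, so it acts on $\widetilde{v^n}(\sigma,k)$ as multiplication by ${\rm e}^{-i\tau\delta k^2}$, which has modulus one. Both the $X_\tau^{s,b}$ norm and the $Y_\tau^{s,b}$ norm depend only on $|\widetilde{v^n}|$, so both are unchanged, and the inequalities hold with constant $1$. The supremum over $\delta$ is therefore trivial.

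\emph{Proof of \eqref{Eqn:discrete-convolution}.} After conjugation, $V^n$ corresponds for each $k$ to $\eta(n\tau)\tau\sum_{m=0}^n a_m$ with $a_m=\widehat{w^m}(k)$. This is the discrete Duhamel lemma: the $X$-part is \cite[Lemma 3.4]{Ostjems}, and the $Y$-part is in \cite{Jimcom}. I will redo it in the $Z$ setting as follows.
\begin{enumerate}
\item Write $\tau\sum_{m\le n}a_m$ on the Fourier side as $\int \widetilde a(\sigma)\,\frac{\tau({\rm e}^{i(n+1)\tau\sigma}-1)}{{\rm e}^{i\tau\sigma}-1}\,\dd\sigma$, using that $a$ is supported in $n\tau\in[-2,2]$.
\item Split with a cutoff at $|d_\tau(\sigma)|\le1$ versus $|d_\tau(\sigma)|>1$.
\item In the low-modulation region, Taylor expand ${\rm e}^{in\tau\sigma}$. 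Each term gives $\eta(n\tau)(n\tau)^j$, whose $H^{1/2}_\tau\cap$ ($L^1$-weight $0$) norm is $O(C^j/j!)$. The coefficient is bounded by $\int_{|d_\tau|\le1}|\widetilde a|\,\dd\sigma\lesssim\|a\|_{Y^{-1}}$, and the sum converges.
\item In the high-modulation region, the factor $1/d_\tau(\sigma)$ yields two pieces. One is $\eta(n\tau)\times$ a sequence with Fourier transform $\widetilde a/d_\tau$, controlled by $\|a\|_{H^{-1/2}_\tau}$ and $\|a\|_{Y^{-1}}$ through \eqref{Eqn:T-inverse}-type multiplication bounds; at the endpoint $b=\tfrac12$ this uses the $Y$ component. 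The other is $\eta(n\tau)$ times the constant $\int\widetilde a/d_\tau$, which is bounded by $\|a\|_{Y^{-1}}$.
\end{enumerate}

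The main obstacle is this endpoint $b=\tfrac12$ in the high-modulation part, where \eqref{Eqn:T-inverse} does not apply. I will handle it as in the continuous proof of Lemma~\ref{intoft}, using $\lb d_\tau\rb$-weighted convolution bounds for $\widetilde{\eta(n\tau)}$. These follow from the rapid decay of the periodized $\widehat\eta$ on $[-\pi/\tau,\pi/\tau]$. Finally, sum in $k$ with the weight $\lb k\rb^{2s}$ to conclude.
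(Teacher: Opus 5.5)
Your proposal is correct and takes essentially the paper's route. The paper only cites \cite[Lemma~3.4, Remark~3.2]{Ostjems} and \cite[Lemma~3.2]{Roupaa}, which are discrete versions of the standard continuous arguments you reconstruct: conjugation to scalar sequences, Tao-style time localization, and the low/high-modulation Duhamel splitting. Your observation that \eqref{perturb} holds with constant $1$, because $\mathrm{e}^{i\tau\delta\p_x^2}$ acts on $\widetilde{v^n}$ by a unimodular factor, is also right.

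The main input you take for granted rather than prove is the $T$-uniform boundedness of multiplication by $\eta(n\tau/T)$ on $H^b_\tau$ for $|b|<\frac12$. That is the genuinely nontrivial part behind \eqref{Eqn:T-inverse}. By contrast, the endpoint $b=\frac12$ you call the main obstacle is harmless, since it only involves the fixed cutoff $\eta(n\tau)$ and $|d_\tau(\sigma+\sigma')|\le|d_\tau(\sigma)|+|d_\tau(\sigma')|$ gives the needed weighted convolution bound.
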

	
	Inequality \eqref{Eqn:T-inverse} is given in \cite[Lemma~3.4]{Ostjems}. The first inequality in \eqref{perturb} is given in \cite[Remark~3.2]{Ostjems} and the second one can be similarly derived. Inequality \eqref{Eqn:discrete-convolution} can be verified following the ideas present in the proof of \cite[Lemma 3.2]{Roupaa}.

	The following inequalities from \cite[Lemma~3.6]{Ostjems} are frequently used in our analysis:
	\begin{align}
		\|\Pi_\tau v^n\|_{l_\tau^4L^4}& \lesssim \|v^n\|_{X_{\tau}^{0,\frac{3}{8}}}, \label{Eqn:discreteL4bound}\\
		\|\Pi_\tau v^n\|_{X_{\tau}^{0,-\frac{3}{8}}} &\lesssim \|v^n\|_{l_\tau^{\frac{4}{3}}L^{\frac{4}{3}}}.\label{Eqn:discreteX-38bound} 
	\end{align}
	
	Next, we  state some multilinear estimates that will be used in the proof of convergence. For notational simplicity, we shall use the convention that
	$$
	\Pi_\tau f \Pi_\tau g: =  (\Pi_\tau f)( \Pi_\tau g).
	$$
	\begin{theorem}\label{multilineardisc}
		Let $s\geq \frac{1}{2}$. For all sequences $\{v^n_j\}_{n \in \mathbb{Z}}$ $(j = 1,\ldots,5)$ supported on the time grid $n\tau \in [-2T_1, 2T_1]$ for $T_1  \in (0,1]$, there exists $\varepsilon >0$ such that the  following estimates hold:
		\begin{align}
			\|\Pi_{\tau}(\Pi_{\tau}v_{1}^n \Pi_{\tau}v_{2}^n \, \partial_x \Pi_{\tau}\overline{v}_{3}^n)\|_{Z_\tau^{s,-\frac{1}{2}}}
			&\lesssim  T_1^{\varepsilon} \sum\limits_{k=1}^3\|v^n_{k}\|_{X_\tau^{s,\frac{1}{2}}}\prod_{\genfrac{}{}{0pt}{}{j=1}{j\not=k}}^3\|v_j^{n}\|_{X_\tau^{\frac{1}{2},\frac{1}{2}}},	\label{eq:gen_tri}\\ 
			\Bigg\|\Pi_\tau \Bigg(\Pi_{\tau}\overline{v}_1^{n}\Pi_{\tau}\overline{v}_2^{n}\prod_{j=3}^5 \Pi_{\tau}v_j^{n}\Bigg)\Bigg\|_{X_\tau^{s,-\frac{3}{8}}}&\lesssim T_1^{\varepsilon}\sum\limits_{k=1}^5\|v_k^{n}\|_{X_\tau^{s,\frac{1}{2}}}\prod_{\genfrac{}{}{0pt}{}{j=1}{j\not=k}}^5\|v_j^{n}\|_{X_\tau^{\frac{1}{2},\frac{1}{2}}},	\label{eq:gen_quint_pol_est} \\
			\|\Pi_\tau (\Pi_{\tau}\overline{v}_3^{n}\Pi_{\tau}v_4^{n}\Pi_{\tau}v_5^{n})\|_{X_\tau^{s,-\frac{3}{8}}}&\lesssim 
			T_1^{\varepsilon}\sum\limits_{k=3}^5\|v_k^{n}\|_{X_\tau^{s,\frac{1}{2}}}\prod_{\genfrac{}{}{0pt}{}{j=3}{j\not=k}}^5\|v_j^{n}\|_{X_\tau^{\frac{1}{2},\frac{1}{2}}},\label{eq:gen_tri_pol_est} \\
			\|(\psi(\Pi_{\tau}v_1^{n})-\psi(\Pi_{\tau}v_2^{n}))\Pi_{\tau}v_3^{n}\|_{X_\tau^{s,0}}
			&\lesssim T_1^{\varepsilon}\Big(1+\|v_1^{n}\|_{X_\tau^{\frac{1}{2},\frac{1}{2}}\cap Z_\tau^{0,\frac{1}{2}}}+\|v_2^{n}\|_{X_\tau^{\frac{1}{2},\frac{1}{2}}\cap Z_\tau^{0,\frac{1}{2}}}\Big)^3\notag\\
			&\qquad\quad \times \|v_1^{n}-v_2^{n}\|_{X_\tau^{\frac{1}{2},\frac{1}{2}}\cap Z_\tau^{0,\frac{1}{2}}}\|v_3^{n}\|_{X_\tau^{s,\frac{1}{2}}}.\label{eq:gen_psi_est}
		\end{align}
	\end{theorem}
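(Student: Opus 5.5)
The plan is to transfer the continuous multilinear estimates of Lemma~\ref{multilinearcontinuous} to the discrete setting. I would follow the strategy of \cite{Ostjems,Jimcom,Roupaa}, which uses the frequency localization $|k|\le\tau^{-1/2}$ built into $\Pi_\tau$. The key observation is this. For every factor $\Pi_\tau v_j^n$ and for the output (after the outer $\Pi_\tau$), the spatial frequencies satisfy $k^2\le\tau^{-1}$. In the periodized variable $\sigma\in[-\pi/\tau,\pi/\tau]$ we have $|d_\tau(\sigma+k^2)|\sim|\sigma+k^2 \bmod 2\pi/\tau|$. So on the support of the projected objects the discrete weight $\langle d_\tau(\sigma\pm k^2)\rangle$ is comparable to the continuous weight $\langle\sigma\pm k^2\rangle$, once the translate of $\sigma$ in the period interval is chosen appropriately (the distance of $\sigma+k^2$ to $2\pi\Z/\tau$).

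First, I would write the discrete product in Fourier variables. For a product of sequences, $\widetilde{(v_1^n\cdots)}$ is a convolution in $(\sigma,k)$ on $(\R/\frac{2\pi}{\tau}\Z)\times\Z$. Second, I would reduce each estimate by duality to a weighted multilinear form
\[
\sum_{k=k_1+k_2-k_3}\int |k_3|\,\frac{\langle k\rangle^s\, \prod_j |f_j(\sigma_j,k_j)|\,|g(\sigma,k)|}{\langle d_\tau(\sigma+k^2)\rangle^{1/2}\prod_j\langle k_j\rangle^{s_j}\langle d_\tau(\sigma_j\pm k_j^2)\rangle^{1/2}}.
\]
Here all $|k|,|k_j|\le\tau^{-1/2}$. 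The resonance function is $\sigma+k^2-\sum(\sigma_j\pm k_j^2)$ up to multiples of $2\pi/\tau$, and it equals the continuous one $k^2-k_1^2-k_2^2+k_3^2=2(k_1-k_3)(k_2-k_3)$ plus sums of modulations. Since $|2(k_1-k_3)(k_2-k_3)|\lesssim\tau^{-1}$, the resonance can wrap around the torus of $\sigma$ at most $O(1)$ times. I would therefore split into finitely many cases according to which $2\pi/\tau$-translate is active. In each case I would lift the functions to $\R$ (extending $f_j$ by zero outside one period) and bound the form by the continuous form from Herr's proof \cite{Herr2006}, Corollary~4.6. That form is a weighted sum/integral estimate, so it applies verbatim to nonnegative functions.

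The $X^{s,-1/2}$ part of $Z^{s,-1/2}_\tau$ in \eqref{eq:gen_tri} and the $X^{s,-3/8}$ bounds in \eqref{eq:gen_quint_pol_est}, \eqref{eq:gen_tri_pol_est} then follow. For the quintic and cubic polynomial terms I would instead use a more direct argument. Duality with \eqref{Eqn:discreteX-38bound} and H\"older give $l^{4/3}_\tau L^{4/3}$. Then \eqref{Eqn:discreteL4bound}, the Sobolev embedding $H^{1/2}\subset L^p$, and a discrete $l^\infty_\tau$-type bound are applied on the projected functions. The factor $T_1^\varepsilon$ comes from \eqref{Eqn:T-inverse}: we lose a little in $b$ (using $b=\frac12-\varepsilon$ in the intermediate bounds) and recover it through the time localization to $[-2T_1,2T_1]$. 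For \eqref{eq:gen_psi_est}, $\psi(\Pi_\tau v)$ is a function of $n$ only. I would bound it in $l^4_\tau$ by a discrete analogue of \eqref{con_psi} via \eqref{Eqn:discreteL4bound} and discrete Sobolev-in-time embeddings, writing the difference $\psi(v_1)-\psi(v_2)$ as a telescoping multilinear expression. The remaining factor goes in $l^4_\tau H^s\lesssim X^{s,1/4+}_\tau$, and \eqref{Eqn:T-inverse} supplies the $T_1^\varepsilon$.

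The main obstacle is the $Y^{s,-1}_\tau$ component of \eqref{eq:gen_tri}, i.e.\ the $L^1_\sigma$ part, together with the derivative on $\overline v_3$. Herr's argument there uses a delicate case analysis of the resonant set ($k_1=k_3$ or $k_2=k_3$, which is exactly where the gauge-induced cancellations sit) and Cauchy--Schwarz in $\sigma$ with weights $\langle\sigma+k^2\rangle^{-1}$. In the discrete setting the $L^1$ integral is only over one period, and $\langle d_\tau\rangle^{-1}$ is not integrable with the same logarithmic bound unless $|k|\le\tau^{-1/2}$ is used carefully. I would first try to show that the discrete multipliers are pointwise dominated by the continuous ones after the case splitting above. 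This lets me quote the continuous $Y$-estimate directly. If that fails in the nearly resonant region, I would redo Herr's $Y$-argument there with the bound $\int_{-\pi/\tau}^{\pi/\tau}\langle d_\tau(\sigma)\rangle^{-1-}\dd\sigma\lesssim1$.
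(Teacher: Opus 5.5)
\textbf{Verdict: there is a gap in the core step for \eqref{eq:gen_tri}; the other three estimates are handled essentially as in the paper.} For \eqref{eq:gen_quint_pol_est}, \eqref{eq:gen_tri_pol_est} and \eqref{eq:gen_psi_est} you, like the paper, use duality into $l^{4/3}_\tau L^{4/3}$, H\"older, an $l^4_\tau$ bound on the $\psi$-difference, and \eqref{Eqn:T-inverse}.

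The problem is your claim that each wrap-around case is ``bounded by the continuous form from Herr's proof''. After lifting the inputs to modulation variables $\lambda_j=\sigma_j\pm k_j^2\in[-\pi/\tau,\pi/\tau)$, the input weights are indeed comparable. The output is different. Its modulation is $\Lambda=\lambda_1+\lambda_2+\lambda_3+R$, with $R$ the resonance function, and its discrete weight is $\langle d_\tau(\Lambda)\rangle\sim\langle\Lambda-2\pi m/\tau\rangle$, not $\langle\Lambda\rangle$. In the wrapped cases $m\neq0$ the discrete multiplier is \emph{not} dominated by Herr's. For example, take $\lambda_1=\lambda_2=\lambda_3=\frac{2\pi}{3\tau}$ at a resonant frequency configuration ($R=0$). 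Then $\langle d_\tau(\Lambda)\rangle=1$ while $\langle\Lambda\rangle\sim\tau^{-1}$, a loss of $\tau^{-1/2}$. Equivalently, the $m$-th lifted form has resonance function $R-2\pi m/\tau$, and Herr's estimate says nothing about it. So the $X^{s,-\frac12}_\tau$ part does not ``then follow''. The $Y^{s,-1}_\tau$ part has the same defect, independently of the nearly resonant issue you flag. There is also a smaller problem: Herr's $T^\varepsilon$ comes from continuous time localization, which the lifted functions do not have. You would need the form estimate with $b<\frac12$ on some input and then \eqref{Eqn:T-inverse}.

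\textbf{The missing observation is that the wrapped pieces are high-modulation pieces.} Under $\Pi_\tau$ all four frequencies are at most $\tau^{-1/2}$, so $|R|\le 2\tau^{-1}<\pi/\tau$. Hence $m\neq0$ forces $\max_j|\lambda_j|\gtrsim\tau^{-1}\gtrsim\langle k\rangle\langle k_3\rangle$. This absorbs $\langle k\rangle^{1/2}|k_3|\langle k_3\rangle^{-1/2}$ and leaves forms of the type $M_1,M_2,M_3$ in Lemma~\ref{Lem:multiplier1}. Those must be bounded with the discrete tools \eqref{Eqn:discreteL4bound}, \eqref{Eqn:discreteX-38bound}, \eqref{eq:sob2dis}, \eqref{eq:dual_sobdis}, not by quoting Herr.

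The paper never lifts. It proves $|M|\lesssim M_0+M_1+M_2+M_3+N$ directly for the periodic weights. Subadditivity and periodicity of $|d_\tau|$ make this insensitive to which translate is active, and $|d_\tau(x)|\gtrsim|x|$ for $|x|\lesssim\tau^{-1}$ is applied to the resonance. It then bounds each piece with those discrete estimates, taking $b<\frac12$ on at least one input to get $T_1^\varepsilon$ (Theorem~\ref{Thm:multiplier1}). The $Y$ part is done the same way (Lemma~\ref{Lem:multiplier1-Y}, Theorem~\ref{Thm:multiplier1-Y}); only $\widetilde M_0$ needs $\int_{-\pi/\tau}^{\pi/\tau}\langle d_\tau(\sigma)\rangle^{-1-\beta}\dd\sigma\lesssim1$, which is your fallback.

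One minor point for the quintic and cubic estimates: $X^{\frac12,\frac12}_\tau$ does not embed into $l^\infty_\tau$. Put the non-derivative factors in $l^{16}_\tau L^{16}$ via \eqref{eq:sob2dis}, as the paper does.
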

	\begin{proof}
		The estimate \eqref{eq:gen_tri} with $s=\frac{1}{2}$ follows from Theorems~\ref{Thm:multiplier1} and \ref{Thm:multiplier1-Y}. Its extension to the case $s\geq\frac{1}{2}$ can be established as below. Noting that $\langle k \rangle^{s-\frac{1}{2}} \lesssim \sum_{j=1}^3 \langle k_j \rangle^{s-\frac{1}{2}}$ for $k=k_1+k_2+k_3$, we have 
		\begin{align}
			&\|\Pi_\tau (\Pi_\tau v_1^n \Pi_\tau v_2^n \partial_x\Pi_\tau \overline{v}_3^n)\|_{X_\tau^{s,-\frac{1}{2}}}    \notag\\
			&\qquad = \big\|  \langle k \rangle^{\frac{1}{2}}\langle k \rangle^{s-\frac{1}{2}} \langle d_\tau(\sigma+k^2)\rangle^{-\frac{1}{2}}  \mathcal{F}_{\tau,x}\big[ \Pi_\tau (\Pi_\tau v_1^n \Pi_\tau v_2^n \partial_x\Pi_\tau \overline{v}_3^n) \big]\big\|_{L^2l^2} \notag\\
			&\qquad\lesssim   \big\| \langle k \rangle^{1\over2} \langle d_\tau(\sigma+k^2)\rangle^{-\frac{1}{2}} \mathcal{F}_{\tau,x}\big[ \Pi_\tau \big(  (\langle \partial_x \rangle^{s-\frac{1}{2}}\Pi_\tau v_1^n ) \Pi_\tau v_2^n \partial_x\Pi_\tau \overline{v}_3^n  \big) \big] \big\|_{L^2l^2}\notag\\
			&\qquad\quad+\big\|  \langle k \rangle^{1\over2} \langle d_\tau(\sigma+k^2)\rangle^{-\frac{1}{2}} \mathcal{F}_{\tau,x}\big[ \Pi_\tau \big (\Pi_\tau v_1^n (\langle \partial_x \rangle^{s-\frac{1}{2}} \Pi_\tau v_2^n) \partial_x\Pi_\tau \overline{v}_3^n  \big) \big]  \big\|_{L^2l^2}\notag\\
			&\qquad\quad+\big\| \langle k \rangle^{1\over2} \langle d_\tau(\sigma+k^2)\rangle^{-\frac{1}{2}} \mathcal{F}_{\tau,x}\big[  \Pi_\tau \big(\Pi_\tau v_1^n \Pi_\tau v_2^n (\partial_x \langle \partial_x \rangle^{s-\frac{1}{2}} \Pi_\tau \overline{v}_3^n )\big) \big] \big\|_{L^2l^2}\notag \\
			&\qquad = \big\|\Pi_\tau \big(  (\langle \partial_x\rangle^{s-\frac{1}{2}} \Pi_\tau v_1^n )\Pi_\tau v_2^n \partial_x\Pi_\tau \overline{v}_3^n\big)\big\|_{X_\tau^{\frac{1}{2},-\frac{1}{2}}}+\big\|\Pi_\tau \big(\Pi_\tau v_1^n (\langle \partial_x\rangle^{s-\frac{1}{2}}\Pi_\tau v_2^n ) \partial_x\Pi_\tau \overline{v}_3^n\big)\big\|_{X_\tau^{\frac{1}{2},-\frac{1}{2}}} \notag\\
			&\qquad\quad+\big\|\Pi_\tau \big(\Pi_\tau v_1^n \Pi_\tau v_2^n (\langle \partial_x\rangle^{s-\frac{1}{2}}\partial_x\Pi_\tau \overline{v}_3^n) \big)\big\|_{X_\tau^{\frac{1}{2},-\frac{1}{2}}}.\notag
		\end{align}
		Applying \eqref{eq:gen_tri} with $s=\frac{1}{2}$ to each term on the right-hand side gives the desired result.
		
		For proving \eqref{eq:gen_quint_pol_est}, we focus on $s=\frac{1}{2}$ as the extension $s\geq \frac{1}{2}$ is immediate. The proof is obtained by showing that
		$$
		\Bigg\|\Pi_\tau \Bigg(\Pi_{\tau}{v}_1^{n}\Pi_{\tau}{v}_2^{n}\prod_{j=3}^5 \Pi_{\tau}v_j^{n}\Bigg)\Bigg\|_{X_\tau^{\frac{1}{2},-\frac{3}{8}}}\lesssim T_1^{\varepsilon} \|\overline{v}_1^n\|_{X_\tau^{\frac{1}{2},\frac{1}{2}}}\|\overline{v}_2^n\|_{X_\tau^{\frac{1}{2},\frac{1}{2}}}
		\prod_{j=3}^5 \|{v}_j^n\|_{X_\tau^{\frac{1}{2},\frac{1}{2}}}.
		$$
		By noting that $\langle k \rangle^{\frac{1}{2}}\lesssim \sum_{l=1}^5 \langle k_l \rangle^{\frac{1}{2}}$ for $k=\sum_{l=1}^5 k_l$, and using \eqref{Eqn:discreteX-38bound}, H\"older's inequality, \eqref{eq:sob2dis} and \eqref{Eqn:T-inverse}, we have
		\begin{align}
			\Bigg\|\Pi_\tau \Bigg( \Pi_{\tau}{v}_1^{n}\Pi_{\tau}{v}_2^{n}\prod_{j=3}^5 \Pi_{\tau}v_j^{n} \Bigg)\Bigg\|_{X_\tau^{\frac{1}{2},-\frac{3}{8}}} & \lesssim \sum_{l=1}^5 \Bigg\| \Pi_\tau \Bigg( \mathcal{F}_{\tau,x}^{-1} (\langle k_l \rangle^{\frac{1}{2}} |\widetilde{\Pi_\tau v_l^n}| ) \prod_{\genfrac{}{}{0pt}{}{j=1}{j\not=l}}^5   \mathcal{F}_{\tau,x}^{-1} ( |\widetilde{\Pi_\tau v_j^n}| )\Bigg) \Bigg\|_{X_\tau^{0,-\frac{3}{8}}} \notag\\
			& \lesssim \sum_{l=1}^5 \Bigg\|\Pi_\tau\bigg( \mathcal{F}_{\tau,x}^{-1} (\langle k_l \rangle^{\frac{1}{2}} |\widetilde{\Pi_\tau v_l^n}| ) \prod_{\genfrac{}{}{0pt}{}{j=1}{j\not=l}}^5   \mathcal{F}_{\tau,x}^{-1} ( |\widetilde{\Pi_\tau v_j^n}| )\bigg) \Bigg\|_{l_\tau^{\frac{4}{3}}L^{\frac{4}{3}}} \notag\\
			&\lesssim \sum_{l=1}^5  \|\mathcal{F}_{\tau,x}^{-1} (\langle k_l \rangle^{\frac{1}{2}} |\widetilde{\Pi_\tau v_l^n}| )\|_{l_\tau^2L^2}   \prod_{\genfrac{}{}{0pt}{}{j=1}{j\not=l}}^5  \| \mathcal{F}_{\tau,x}^{-1} ( |\widetilde{\Pi_\tau v_j^n}| ) \|_{l_\tau^{16}L^{16}}.\notag
		\end{align}
		It suffices to prove the estimate for the case $l=1$, since the other cases can be treated similarly. By \eqref{eq:sob2dis} and \eqref{Eqn:T-inverse}, we have
		\begin{align}
			\|\mathcal{F}_{\tau,x}^{-1} (\langle k_l \rangle^{\frac{1}{2}} |\widetilde{\Pi_\tau v_l^n}| )\|_{l_\tau^2L^2}   \prod_{j=2}^5  \| \mathcal{F}_{\tau,x}^{-1} ( |\widetilde{\Pi_\tau v_j^n}| ) \|_{l_\tau^{16}L^{16}} 
			&\lesssim  \|\overline{v}_1^n\|_{X_\tau^{\frac{1}{2},0}}\|\overline{v}_2^n\|_{X_\tau^{\frac{15}{32},\frac{15}{32}}} \prod_{j=3}^5 \|{v}_j^n\|_{X_\tau^{\frac{15}{32},\frac{15}{32}}} \notag \\
			&\lesssim T_1^{\varepsilon} \|\overline{v}_1^n\|_{X_\tau^{\frac{1}{2},\frac{1}{2}}}\|\overline{v}_2^n\|_{X_\tau^{\frac{1}{2},\frac{1}{2}}}
			\prod_{j=3}^5 \|{v}_j^n\|_{X_\tau^{\frac{1}{2},\frac{1}{2}}}.\notag
		\end{align}
		The estimate \eqref{eq:gen_tri_pol_est} can be derived in a similar manner.
		
		For the proof of \eqref{eq:gen_psi_est}, by recalling from \cite[Eq.~(2.33)]{Herr2006} that
		\begin{equation}\label{Eeqn:recall-psi}
			\begin{aligned}
				|\psi(u)(t) - \psi(v)(t)| &\lesssim   ( 1 + \|u(t)\|_{H^{\frac{1}{2}}} + \|v(t)\|_{H^{\frac{1}{2}}} )^3 \|u(t)-v(t)\|_{H^{\frac{1}{2}}} \\
				&\quad + \big|  \|u(0)\|_{L^2}^4 - \|v(0)\|_{L^2}^4  \big| \\
				&\lesssim   ( 1 + \|u(t)\|_{H^{\frac{1}{2}}} + \|v(t)\|_{H^{\frac{1}{2}}} )^3 \|u(t)-v(t)\|_{H^{\frac{1}{2}}} \\
				&\quad + ( \|u(0)\|_{L^2}^3 + \|v(0)\|_{L^2}^3 ) \|u(0) - v(0)\|_{L^2},
			\end{aligned}
		\end{equation}
		we obtain
		\begin{align}
			\|\psi(u^n) - \psi(v^n)\|_{l^4_\tau} &\lesssim  \Big\| \Big( 1 + \|u^n\|_{H^{\frac{1}{2}}} + \|v^n\|_{H^{\frac{1}{2}}} \Big)^3\Big\|_{l_\tau^8} \| u^n-v^n\|_{l_\tau^8 H^{\frac{1}{2}}} \notag \\
			&\quad + ( \|u(0)\|_{L^2}^3 + \|v(0)\|_{L^2}^3 ) \|u(0) - v(0)\|_{L^2} \notag \\
			&\lesssim  \Big(1  + \|u^n\|_{l_\tau^{24} H^{\frac{1}{2}}} + \|v^n\|_{l_\tau^{24} H^{\frac{1}{2}}} \Big)^3 \| u^n-v^n\|_{l_\tau^8 H^{\frac{1}{2}}}\notag \\
			&\quad + \Big( \|u^n\|_{Z_\tau^{0,\frac{1}{2}}}^3 + \|v^n\|_{Z_\tau^{0,\frac{1}{2}}}^3 \Big) \|u^n - v^n\|_{Z_\tau^{0,\frac{1}{2}}}.\notag
		\end{align}
		Using \eqref{eq:sobdis} and the embedding \eqref{eq:linfty_embdis}, we arrive at 
		\begin{equation}\label{Eqn:psil4}
			\|\psi(u^n) - \psi(v^n)\|_{l^4_\tau} \lesssim \Big(1+\|u^{n}\|_{X_\tau^{\frac{1}{2},\frac{1}{2}}\cap Z_\tau^{0,\frac{1}{2}}}+\|v^{n}\|_{X_\tau^{\frac{1}{2},\frac{1}{2}}\cap Z_\tau^{0,\frac{1}{2}}}\Big)^3  \|u^{n}-v^{n}\|_{X_\tau^{\frac{1}{2},\frac{1}{2}}\cap Z_\tau^{0,\frac{1}{2}}}.
		\end{equation}
		Finally, by H\"older's inequality and \eqref{eq:sobdis}, we obtain
		$$
		\begin{aligned}
			&\|(\psi(\Pi_{\tau}v_1^{n})-\psi(\Pi_{\tau}v_2^{n}))\Pi_{\tau}v_3^{n}\|_{X_\tau^{s,0}} 
			\lesssim \|\psi(\Pi_{\tau}v_1^{n})-\psi(\Pi_{\tau}v_2^{n})\|_{l_\tau^4} \|\Pi_{\tau}v_3^{n}\|_{l_\tau^4H^s}  \\
			&\qquad \lesssim   \|\psi(\Pi_{\tau}v_1^{n})-\psi(\Pi_{\tau}v_2^{n})\|_{l_\tau^4} \|v_3^{n}\|_{X_\tau^{s,\frac{3}{8}}} \lesssim  T_1^{\varepsilon} \|\psi(\Pi_{\tau}v_1^{n})-\psi(\Pi_{\tau}v_2^{n})\|_{l_\tau^4} \|v_3^{n}\|_{X_\tau^{s,\frac{1}{2}}},
		\end{aligned}
		$$
		where the last inequality follows by applying \eqref{Eqn:T-inverse}, using the fact that each $\{v_j^n\}_{n\in\Z}$ is supported on the time grid $n\tau\in[-2T_1,2T_1]$.
		Plugging \eqref{Eqn:psil4} into the above inequality completes the proof.
	\end{proof}
	
	The following lemma establishes estimates of the exact solution in discrete Bourgain spaces. The proof can be obtained by following the ideas in \cite[Lemma~3.4]{Roupaa}, making use of Proposition~\ref{highregbound}.
	
	\begin{lemma}\label{lemmadisc-cont}
		Denote $v_{\tau}$ as an extension of the solution of the projected dNLS \eqref{pjgdnls}. Then for $s>\frac{1}{2}$ and every $\tau \in (0, 1]$, 
		we have the estimate
		\begin{align}\label{bounddis}
			\sup_{\theta \in [-4 \tau, 4 \tau]}   \|v_{\tau}(t_{n}+ \theta)  \|_{X^{s, {1 \over 2}}_{\tau}} + \sup_{\theta \in [-4 \tau, 4 \tau]}   \|v_{\tau}(t_{n}+ \theta)  \|_{Y^{s,0}_{\tau}} 
			\lesssim_T 1.\end{align}
		
	\end{lemma}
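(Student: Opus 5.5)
We need to prove Lemma \ref{lemmadisc-cont}. The plan is to follow \cite[Lemma~3.4]{Roupaa}: compare the discrete norm of the sampled sequence $v_\tau(t_n+\theta)$ with continuous Bourgain norms of a suitable extension of $v_\tau$, and then invoke Proposition~\ref{diffpj} and Proposition~\ref{highregbound}.

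First, I fix an extension of $v_\tau$ to $\R$ (localised with $\eta$), still denoted $v_\tau$, such that
\[
\|v_\tau\|_{Z^{s,\frac12}}\lesssim C_T, \qquad \|v_\tau\|_{X^{s,1}}\lesssim C_T\tau^{-\frac12}.
\]
Such an extension exists by Proposition~\ref{diffpj} and Proposition~\ref{highregbound}; one takes a common extension by gluing the local fixed points $V_\tau$ with cutoffs. Since $v_\tau=\Pi_\tau v_\tau$, only $|k|\le\tau^{-1/2}$ occurs, so $k^2\tau\le1$.

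Next, I write $w(t)=\e^{-it\p_x^2}v_\tau(t)$. Then $\|v_\tau\|_{X^{s,b}}=\|w\|_{H^b_tH^s}$ and, by \eqref{Xtausb},
\[
\|v_\tau(t_n+\theta)\|_{X^{s,b}_\tau}\sim \|\e^{-in\tau\p_x^2}v_\tau(t_n+\theta)\|_{H^b_\tau H^s}=\|\e^{i\theta\p_x^2}w(t_n+\theta)\|_{H^b_\tau H^s}.
\]
By \eqref{perturb}, the factor $\e^{i\theta\p_x^2}$ with $|\theta|\le4\tau$ is harmless. For each fixed $k$, the question therefore reduces to a one-dimensional sampling estimate for $g(t)=\widehat{w}(t,k)$: bound the discrete $H^{1/2}_\tau$ norm of $(g(t_n+\theta))_n$ by continuous norms. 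Poisson summation gives
\[
\widetilde{g(\cdot_n+\theta)}(\sigma)=\sum_{m\in\Z}\widehat g(\sigma+2\pi m/\tau)\,\e^{i\theta(\sigma+2\pi m/\tau)}.
\]
The term $m=0$ is controlled by $\|g\|_{H^{1/2}}$, since $|d_\tau(\sigma)|\sim|\sigma|$ on the fundamental domain. The aliased terms $m\neq0$ live at frequencies $|\sigma'|\gtrsim 1/\tau$. By Cauchy--Schwarz in $m$ with weight $\langle\sigma'\rangle^{-2}$, they are bounded by
\[
\tau^{-1/2}\cdot\tau\,\|\langle\sigma'\rangle \widehat g\|_{L^2(|\sigma'|\gtrsim1/\tau)}\lesssim\tau^{1/2}\|g\|_{H^1}.
\]
I expect this aliasing bound to be the main obstacle. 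It is exactly where the loss $\tau^{-1/2}$ from Proposition~\ref{highregbound} must be compensated by the gain $\tau^{1/2}$. Summing over $k$ with weight $\langle k\rangle^{2s}$ then gives
\[
\|v_\tau(t_n+\theta)\|_{X^{s,\frac12}_\tau}\lesssim \|v_\tau\|_{X^{s,\frac12}}+\tau^{\frac12}\|v_\tau\|_{X^{s,1}}\lesssim_T1,
\]
uniformly in $\theta$.

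For the $Y^{s,0}_\tau$ part, I repeat the same aliasing decomposition in the $L^1_\sigma$ norm. The $m=0$ term is bounded by $\|v_\tau\|_{Y^{s,0}}\le\|v_\tau\|_{Z^{s,\frac12}}$. For the aliased terms, I use $\sum_{m\ne0}\int_{|\sigma'|\gtrsim 1/\tau}|\widehat g|\,d\sigma'\lesssim \tau^{1/2}\|\langle\sigma'\rangle\widehat g\|_{L^2}$, again by Cauchy--Schwarz, and this is once more controlled by $\tau^{1/2}\|v_\tau\|_{X^{s,1}}\lesssim1$.

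Finally, the coupling between $\langle d_\tau(\sigma+k^2)\rangle$ and the shift by $k^2$ is harmless because $k^2\le\tau^{-1}$. The shift therefore moves frequencies by at most one period, which is absorbed in the periodicity of $d_\tau$. Combining the two parts yields \eqref{bounddis}.
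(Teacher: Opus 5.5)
Your proposal is correct and is essentially the argument the paper defers to, namely \cite[Lemma~3.4]{Roupaa} combined with Proposition~\ref{highregbound}: you apply Poisson summation in time to the twisted profile of a single extension that carries both the $Z^{s,\frac12}$ bound and the $X^{s,1}$ bound, and the $\tau^{1/2}$ gain on the aliased modes exactly cancels the $\tau^{-1/2}$ loss in $\|v_\tau\|_{X^{s,1}}$. (A small simplification: the $Y^{s,0}_\tau$ part needs no $X^{s,1}$ input, because the $L^1$ norm over one period of a periodization is bounded by the full-line $L^1$ norm, which gives $\|v_\tau(t_n+\theta)\|_{Y^{s,0}_\tau}\lesssim\|v_\tau\|_{Y^{s,0}}$ directly.)
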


	%%%%%%%%%%%%%%%%%%%%%%%%%%%
	\section{Local error analysis}\label{sectionlocal}
	%%%%%%%%%%%%%%%%%%%%%%%%%%%
	
	In this section, we examine the local error of the explicit exponential Euler method~\eqref{filteredsch} applied to the system~\eqref{pjgdnls}. The local error is given by	
	\begin{align}
		\mathcal{E}_{loc}(t_{n+1})&=v_\tau(t_n+\tau)-\Phi^\tau(v_\tau(t_n))\nonumber\\
		&=\fe^{i\tau\p_x^2}\int_0^\tau\fe^{-i\zeta\p_x^2} \Big[F_\tau(v_\tau(t_n+\zeta))-F_\tau(v_\tau(t_n))\Big] \dd\zeta\nonumber\\
		&=-\fe^{i\tau\p_x^2}\int_0^\tau\fe^{-i\zeta\p_x^2}\Pi_{\tau}\Big[(\Pi_\tau v_\tau(t_n+\zeta))^2\p_x\Pi_\tau\overline{v}_\tau(t_n+\zeta)-(\Pi_\tau v_\tau(t_n))^2\p_x\Pi_\tau\overline{v}_\tau(t_n)\Big]\dd\zeta\nonumber\\
		&\quad +\frac{i}{2}\fe^{i\tau\p_x^2}\int_0^\tau\fe^{-i\zeta\p_x^2}\Pi_{\tau} \Big[|\Pi_\tau v_\tau(t_n+\zeta)|^4\Pi_\tau v_\tau(t_n+\zeta)-|\Pi_\tau v_\tau(t_n)|^4\Pi_\tau v_\tau(t_n)\Big]\dd\zeta\nonumber\\
		&\quad -i\mu\fe^{i\tau\p_x^2}\int_0^\tau\fe^{-i\zeta\p_x^2}\Pi_{\tau}\Big[|\Pi_\tau v_\tau(t_n+\zeta)|^2\Pi_\tau v_\tau(t_n+\zeta)-|\Pi_\tau v_\tau(t_n)|^2\Pi_\tau v_\tau(t_n)\Big]\dd\zeta\nonumber\\
		&\quad +i\fe^{i\tau\p_x^2}\int_0^\tau\fe^{-i\zeta\p_x^2}\Big[\psi(\Pi_\tau v_\tau(t_n+\zeta))\Pi_\tau v_\tau(t_n+\zeta)-\psi(\Pi_\tau v_\tau(t_n))\Pi_\tau v_\tau(t_n)\Big]\dd\zeta\nonumber\\
		&=:\fe^{i\tau\p_x^2}(\mathcal{E}_1(t_{n+1})+\mathcal{E}_2(t_{n+1})+\mathcal{E}_3(t_{n+1})+\mathcal{E}_4(t_{n+1})).
		\label{vloc}
	\end{align}
	The following lemma presents an estimate for $\mathcal{E}_{loc}$. In the proof, we frequently use the embedding
	\begin{equation}\label{eq:XtautoZtao}
		\|v^n\|_{Z_\tau^{\frac{1}{2},-\frac{1}{2}}} = \|v^n\|_{X_\tau^{\frac{1}{2},-\frac{1}{2}}}+\|v^n\|_{Y_\tau^{\frac{1}{2},-1}}
		\lesssim  \|v^n\|_{X_\tau^{\frac{1}{2},-\frac{1}{2}+\varepsilon}},\quad\varepsilon>0,
	\end{equation}
	where the last inequality follows from \eqref{eq:XtautoYtao}.
	
	\begin{lemma}\label{Lem:local-error}
		For $s\in (\frac{1}{2},\frac{5}{2}]$ and $\tau,T_1\in(0,1]$, we have 
		$$
		\tau^{-1}\|\eta(\tfrac{t_{n+1}}{T_1})\mathcal{E}_{{loc}}(t_{n+1})\|_{Z_{\tau}^{\frac{1}{2},-\frac{1}{2}}} \leq C_{T} \tau^{\tfrac{s}{2}-\frac{1}{4}},
		$$
		where $C_T$ is a constant depending on $v_0$ and $T$.
	\end{lemma}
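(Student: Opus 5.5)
We will treat the four pieces $\mathcal{E}_1,\dots,\mathcal{E}_4$ of \eqref{vloc} separately. First we remove the outer factor $\fe^{i\tau\p_x^2}$ by \eqref{perturb}, so it suffices to bound $\tau^{-1}\|\eta(\tfrac{t_{n+1}}{T_1})\mathcal{E}_j(t_{n+1})\|_{Z_\tau^{1/2,-1/2}}$. By \eqref{eq:XtautoZtao} we may then work in $X_\tau^{1/2,-1/2+\varepsilon}$. In that norm we rely on the discrete multilinear estimates of Theorem~\ref{multilineardisc}, which are applied at time-shifted sequences. The common mechanism is to write each difference $N(v_\tau(t_n+\zeta))-N(v_\tau(t_n))$ telescopically as a sum of multilinear expressions. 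In each such expression exactly one slot carries the increment $w_\zeta^n:=v_\tau(t_n+\zeta)-v_\tau(t_n)$, and the other slots carry $v_\tau(t_n+\zeta)$ or $v_\tau(t_n)$. We then pull the $\zeta$-integral outside the norm by Minkowski, so that $\tau^{-1}\int_0^\tau\sup_\zeta(\cdots)\dd\zeta\le\sup_{\zeta\in[0,\tau]}(\cdots)$. The factors $\fe^{-i\zeta\p_x^2}$ are harmless by \eqref{perturb}.

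The key quantity is the increment $w^n_\zeta$, measured in a discrete Bourgain norm at regularity $\tfrac12$, or slightly below $\tfrac12$ in $b$. We write $w^n_\zeta=(\fe^{i\zeta\p_x^2}-1)v_\tau(t_n)+\int_0^\zeta\fe^{i(\zeta-\xi)\p_x^2}F_\tau(v_\tau(t_n+\xi))\dd\xi$, and we aim for
\[
\sup_{\zeta\in[0,\tau]}\|\eta(\tfrac{n\tau}{T_1})w^n_\zeta\|_{X_\tau^{\frac12,\frac12-\varepsilon'}}\lesssim_T\tau^{\frac s2-\frac14}.
\]
For the linear part we use $|\fe^{-i\zeta k^2}-1|\lesssim(\tau k^2)^{\gamma}$ with $\gamma=\tfrac s2-\tfrac14\le1$, which is allowed since $s\le \tfrac52$. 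Together with $\langle k\rangle^{1/2}(\tau k^2)^\gamma\le\tau^{\gamma}\langle k\rangle^{s}$ and Lemma~\ref{lemmadisc-cont}, this gives the rate. Alternatively, we can express the increment through $\p_t v_\tau$ and interpolate between Proposition~\ref{highregbound}, which gives $\|v_\tau\|_{X^{s,1}}\lesssim\tau^{-1/2}$, and the $X^{s,1/2}$ bound of Proposition~\ref{diffpj}. Transferring to the discrete setting follows the argument of \cite[Lemma~3.4]{Roupaa}. The Duhamel part carries an explicit factor $\zeta\le\tau$. On frequencies $|k|\le\tau^{-1/2}$ we bound it via \eqref{normdecr-b}, \eqref{normdecr-s} and the continuous multilinear bounds, losing at most $\tau^{-1/2}$ from $\p_x$, so it is $O(\tau^{1/2})\le O(\tau^{s/2-1/4})$.

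With this increment bound, we estimate each term as follows. For $\mathcal{E}_1$ we apply \eqref{eq:gen_tri} with $s=\tfrac12$. The increment may sit in either undifferentiated slot or in the differentiated conjugate slot; all cases are covered, since \eqref{eq:gen_tri} is symmetric in the needed sense and uses $X_\tau^{1/2,1/2}$ norms. The remaining factors are bounded by Lemma~\ref{lemmadisc-cont}. For $\mathcal{E}_2$ and $\mathcal{E}_3$ we use \eqref{eq:gen_quint_pol_est} and \eqref{eq:gen_tri_pol_est}, together with $X_\tau^{1/2,-3/8}\hookrightarrow X_\tau^{1/2,-1/2+\varepsilon}$. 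For $\mathcal{E}_4$ we split into $(\psi(\Pi_\tau v_\tau(t_n+\zeta))-\psi(\Pi_\tau v_\tau(t_n)))\Pi_\tau v_\tau(t_n+\zeta)$, which we bound by \eqref{eq:gen_psi_est}, and $\psi(\Pi_\tau v_\tau(t_n))\Pi_\tau w^n_\zeta$, which we bound by Hölder and \eqref{con_psi}. Here the $Z_\tau^{0,1/2}$ part of the increment (its $Y^{0,0}_\tau$ component) must also be shown to be $O(\tau^{s/2-1/4})$, by the same splitting of $w^n_\zeta$ as above.

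\textbf{Main obstacle.} The main difficulty is that the multilinear estimates demand the full exponent $b=\tfrac12$ on the increment. The naive linear bound loses exactly the time regularity we need, and the bound must hold uniformly in the cutoff scale $T_1$. We would first try to carry out the estimates of Theorem~\ref{multilineardisc} with $b=\tfrac12-\varepsilon'$ on the increment slot; since those proofs give a $T_1^\varepsilon$ gain, there is room for this. We would then absorb $\eta(\tfrac{\cdot}{T_1})$ using \eqref{Eqn:T-inverse}. If this fails, we fall back on measuring $w^n_\zeta$ in $X_\tau^{1/2,1/2}$ through the discrete derivative representation and Proposition~\ref{highregbound}, accepting the loss $\tau^{1/2}\cdot\tau^{-1/2}\tau^{(s-1/2)/2}$.
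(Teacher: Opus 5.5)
Your skeleton matches the paper's: you telescope each difference so that one slot carries the increment $v_\tau(t_n+\zeta)-v_\tau(t_n)$, split the increment by Duhamel, bound the linear piece $(\fe^{i\zeta\p_x^2}-1)v_\tau(t_n)$ via $|\fe^{i\theta}-1|\lesssim|\theta|^{s/2-1/4}$, and insert it into \eqref{eq:gen_tri}. For $s\le\frac32$ this works. The gap is the Duhamel piece of the increment when $s\in(\frac32,\frac52]$.

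You bound that piece by $O(\tau^{1/2})$ and then write $O(\tau^{1/2})\le O(\tau^{s/2-1/4})$. For $s>\frac32$ one has $\frac s2-\frac14>\frac12$, so the inequality goes the wrong way, and at $s=\frac52$ you need $O(\tau)$. This is not a bookkeeping slip. As long as the increment sits in a slot of \eqref{eq:gen_tri} (or of a variant with $b=\frac12-\varepsilon'$), its Duhamel piece must be measured with $b\approx\frac12$. Lifting from $b=0$ to $b\approx\frac12$ by \eqref{normdecr-b} consumes $\tau^{-1/2}$ of the factor $\tau$ produced by the $\xi$-integral. Without a separate estimate on the time regularity of $F_\tau(v_\tau)$, including the nonlocal term $\psi(\Pi_\tau v_\tau)\Pi_\tau v_\tau$, the tools you list do not get past $\tau^{1/2}$.

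Incidentally, for $s<\frac32$ those tools give $\tau\cdot\tau^{-1/2}\cdot\tau^{-(3/2-s)/2}=\tau^{s/2-1/4}$ rather than $\tau^{1/2}$. That suffices, and it is exactly the paper's computation for $\mathcal{E}_{1,1}^{(2)}$ in that range.

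Your fallback hits the same ceiling. The time-difference part of the increment is at best of size $\tau^{1/2}\|v_\tau\|_{X^{\frac12,1}}$ in $X_\tau^{\frac12,\frac12}$. Proposition~\ref{highregbound} as stated only makes this $O(1)$. Rerunning its proof at spatial index $\frac12$ gives $\tau^{s/2-1/4}$ for $s\le\frac32$, but it saturates at $\tau^{1/2}$ for $s\ge\frac32$, because $\|F_\tau(v_\tau)\|_{X^{\frac12,0}}$ is then merely bounded. Relatedly, the obstacle you single out is not the real one. The linear piece is a time-independent spatial multiplier applied to $v_\tau(t_n)$, so, as your own bound shows, it comes at full $b=\frac12$ through Lemma~\ref{lemmadisc-cont}.

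The missing idea is that for $s>\frac32$ the paper does not route the Duhamel contributions through the trilinear Bourgain estimate at all. By \eqref{eq:XtautoZtao}, $\|\cdot\|_{Z_\tau^{1/2,-1/2}}\lesssim\|\cdot\|_{X_\tau^{1/2,0}}$. Terms such as $\mathcal{E}_{1,1}^{(2)}$ and $\mathcal{E}_{1,3}^{(2)}$ are double time integrals of size $\tau^2$, so the paper estimates the whole term at $b=0$, where no time regularity is lost.
\begin{itemize}
\item For $s\in(\frac32,2]$ it passes to $X_\tau^{0,0}$ at the cost $\tau^{-1/4}$ from \eqref{normdecr-s}. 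It places $v_\tau$ and $\p_x\overline{v}_\tau$ in $l^\infty L^\infty$ using $H^s\subset W^{1,\infty}$, which gives $\tau^{3/4}$.
\item For $s>2$ it stays in $l^2_\tau H^{1/2}$ with $H^1$ bounds on $\p_x v_\tau$, which gives $\tau$.
\item The extra derivative in $\mathcal{E}_{1,3}^{(2)}$ is paid for by $\|\Pi_\tau v\|_{X_\tau^{2,0}}\lesssim\tau^{(s-2)/2}\|v\|_{X_\tau^{s,0}}$, or the analogous bound with $X_\tau^{5/2,0}$.
\end{itemize}

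The same device handles the increment in $\mathcal{E}_4$. There only its $X_\tau^{1/2,0}$ norm is needed, because the $\psi$-difference is bounded pointwise by \eqref{Eeqn:recall-psi} and the initial-mass term drops out by mass conservation \eqref{Eqn:mass-vtau}. Your use of \eqref{eq:gen_psi_est} there instead requires the increment in $X_\tau^{1/2,1/2}\cap Z_\tau^{0,1/2}$, and so runs into the same $\tau^{1/2}$ barrier.
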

	\begin{proof}
		By the triangle inequality and \eqref{perturb}, we have
		\begin{equation*}%\label{Eqn:Eloc-E1234}
			\tau^{-1}\|\eta(\tfrac{t_{n+1}}{T_1})\mathcal{E}_{loc}(t_{n+1})\|_{Z_\tau^{\frac{1}{2},-\frac{1}{2}}} \lesssim \tau^{-1}\sum_{i=1}^4 \|\eta(\tfrac{t_{n+1}}{T_1})\mathcal{E}_i(t_{n+1})\|_{Z_\tau^{\frac{1}{2},-\frac{1}{2}}}.
		\end{equation*}
		Then it suffices to estimate the four terms on the right-hand side  separately.

		\medskip \noindent  {(a)~Estimate of} $\tau^{-1}\|\eta(\frac{t_{n+1}}{T_1})\mathcal{E}_1(t_{n+1})\|_{Z_\tau^{\frac{1}{2},-\frac{1}{2}}}$.

		We rewrite $\mathcal{E}_1$ as
		\begin{align}
			\mathcal{E}_1(t_{n+1})&= -\int_0^\tau\fe^{-i\zeta\p_x^2}\Pi_{\tau}\Big[(\Pi_\tau v_\tau(t_n+\zeta) - \Pi_\tau v_\tau(t_n) )
			\Pi_\tau v_\tau(t_n+\zeta)\p_x\Pi_\tau\overline{v}_\tau(t_n+\zeta) \Big]\dd\zeta \notag \\
			&\quad -\int_0^\tau\fe^{-i\zeta\p_x^2}\Pi_{\tau}\Big[\Pi_\tau v_\tau(t_n)(\Pi_\tau v_\tau(t_n+\zeta) - \Pi_\tau v_\tau(t_n) )
			\p_x\Pi_\tau\overline{v}_\tau(t_n+\zeta) \Big]\dd\zeta \notag \\
			&\quad - \int_0^\tau\fe^{-i\zeta\p_x^2}\Pi_{\tau}\Big[(\Pi_\tau v_\tau(t_n))^2 \p_x\Pi_\tau( \overline{v}_\tau(t_n+\zeta)-\overline{v}_\tau(t_n) ) \Big]\dd\zeta \notag \\
			& =: \mathcal{E}_{1,1}(t_{n+1})+ \mathcal{E}_{1,2}(t_{n+1})+ \mathcal{E}_{1,3}(t_{n+1}).\notag
		\end{align}
		With the help of Duhamel's formula, we have
		$$
		v_\tau(t_n+\zeta)-v_\tau(t_n) =(\fe^{i\zeta\p_x^2}-1)v_\tau(t_n)+\int_0^{\zeta}\fe^{i(\zeta-\xi)\p_x^2}F_\tau(v_\tau(t_n+\xi))\dd\xi.
		$$
		Then $\mathcal{E}_{1,1}$ can be decomposed as
		\begin{align}
			&\mathcal{E}_{1,1}(t_{n+1}) \notag\\
			&=-\int_0^\tau\fe^{-i\zeta\p_x^2}\Pi_{\tau}\bigg[ \Pi_\tau(\fe^{i\zeta\p_x^2}-1)v_\tau(t_n)
			\Pi_\tau v_\tau(t_n+\zeta)\p_x\Pi_\tau\overline{v}_\tau(t_n+\zeta) \bigg]\dd\zeta \notag\\
			&~+\int_0^\tau\fe^{-i\zeta\p_x^2}\Pi_{\tau}\bigg[ \Pi_\tau \int_0^{\zeta}\fe^{i(\zeta-\xi)\p_x^2}(\Pi_\tau v_\tau(t_n+\xi) )^2\partial_x \Pi_\tau\overline{v}_\tau(t_n+\xi) \dd\xi \, 
			\Pi_\tau v_\tau(t_n+\zeta)\p_x\Pi_\tau\overline{v}_\tau(t_n+\zeta) \bigg]\dd\zeta \notag \\
			&~-\int_0^\tau\fe^{-i\zeta\p_x^2}\Pi_{\tau}\bigg[ \Pi_\tau \int_0^{\zeta}\fe^{i(\zeta-\xi)\p_x^2}\frac{i}{2}|\Pi_\tau v_\tau(t_n+\xi)|^4\Pi_\tau v_\tau(t_n+\xi)\dd\xi \, 
			\Pi_\tau v_\tau(t_n+\zeta)\p_x\Pi_\tau\overline{v}_\tau(t_n+\zeta) \bigg]\dd\zeta \notag \\
			&~+\int_0^\tau\fe^{-i\zeta\p_x^2}\Pi_{\tau}\bigg[ \Pi_\tau \int_0^{\zeta}\fe^{i(\zeta-\xi)\p_x^2}i\mu  |\Pi_\tau v_\tau(t_n+\xi)|^2\Pi_\tau v_\tau(t_n+\xi)\dd\xi \, 
			\Pi_\tau v_\tau(t_n+\zeta)\p_x\Pi_\tau\overline{v}_\tau(t_n+\zeta) \bigg]\dd\zeta \notag \\
			&~-\int_0^\tau\fe^{-i\zeta\p_x^2}\Pi_{\tau}\bigg[ \Pi_\tau \int_0^{\zeta}\fe^{i(\zeta-\xi)\p_x^2}i\psi(\Pi_\tau v_\tau(t_n+\xi))\Pi_\tau v_\tau(t_n+\xi)\dd\xi \, 
			\Pi_\tau v_\tau(t_n+\zeta)\p_x\Pi_\tau\overline{v}_\tau(t_n+\zeta) \bigg]\dd\zeta \notag \\
			&=: \mathcal{E}_{1,1}^{(1)}(t_{n+1})+\mathcal{E}_{1,1}^{(2)}(t_{n+1})+\mathcal{E}_{1,1}^{(3)}(t_{n+1})+\mathcal{E}_{1,1}^{(4)}(t_{n+1})+\mathcal{E}_{1,1}^{(5)}(t_{n+1}). \label{Eqn:E11-12345}
		\end{align}
		
		Recall that $\eta$ is a smooth and compactly supported function, which is one on $[-1,1]$ and supported in $[-2,2]$. For $\tau,T_1\in (0,1]$, by \eqref{perturb} and \eqref{eq:gen_tri}, we derive that
		$$
		\begin{aligned}
			\tau^{-1} \|\eta(\tfrac{t_{n+1}}{T_1})\mathcal{E}_{1,1}^{(1)}(t_{n+1})\|_{Z_\tau^{\frac{1}{2},-\frac{1}{2}}} & \lesssim \sup_{\zeta\in[0,\tau]} \Big( \|(\fe^{i\zeta\p_x^2}-1)\Pi_\tau v_\tau(t_n)\|_{X_\tau^{\frac{1}{2},\frac{1}{2}}}  \|
			\Pi_\tau v_\tau(t_n+\zeta)\|_{X_\tau^{\frac{1}{2},\frac{1}{2}}}^2 \Big).
		\end{aligned}
		$$
		Since both $\fe^{i\zeta\p_x^2}-1$ and $\Pi_\tau$ are Fourier multipliers for the spatial variable, by noting that $|\mathrm{e}^{i\theta} - 1| \lesssim |\theta|^\alpha$ for $\alpha\in[0,1]$, we have
		$$
		\begin{aligned}
			\tau^{-1} \|\eta(\tfrac{t_{n+1}}{T_1})\mathcal{E}_{1,1}^{(1)}(t_{n+1})\|_{Z_\tau^{\frac{1}{2},-\frac{1}{2}}} &   \lesssim 
			\|(\fe^{i\zeta\p_x^2}-1)\Pi_\tau v_\tau(t_n)\|_{X_\tau^{\frac{1}{2},\frac{1}{2}}}  \sup_{\zeta\in[0,\tau]}\|v_\tau(t_n+\zeta)\|_{X_\tau^{\frac{1}{2},\frac{1}{2}}}^2 \\
			&  \lesssim \tau^{\frac{s}{2}-\frac{1}{4}} \|v_\tau(t_n)\|_{X_\tau^{s,\frac{1}{2}}}\sup_{\zeta\in[0,\tau]}\|
			v_\tau(t_n+\zeta)\|_{X_\tau^{\frac{1}{2},\frac{1}{2}}}^2\leq C_T \tau^{\frac{s}{2}-\frac{1}{4}} ,
		\end{aligned}
		$$
		where we chose $\alpha = \frac{s}2-\frac14$ and where \eqref{bounddis} was used in the last inequality.
		
		By using \eqref{perturb} and \eqref{eq:gen_tri}, we first bound $\mathcal{E}_{1,1}^{(2)}$ by
		\begin{align}
			&\tau^{-1} \|\eta(\tfrac{t_{n+1}}{T_1})\mathcal{E}_{1,1}^{(2)}(t_{n+1})\|_{Z_\tau^{\frac{1}{2},-\frac{1}{2}}}\notag  \\
			&\qquad\lesssim
			\sup_{\zeta\in[0,\tau]}\bigg( \bigg\|\Pi_\tau \int_0^{\zeta}\fe^{i(\zeta-\xi)\p_x^2}(\Pi_\tau v_\tau(t_n+\xi) )^2\partial_x \Pi_\tau\overline{v}_\tau(t_n+\xi) \dd\xi \bigg\|_{X_\tau^{\frac{1}{2},\frac{1}{2}}} \|
			\Pi_\tau v_\tau(t_n+\zeta)\|_{X_\tau^{\frac{1}{2},\frac{1}{2}}}^2\bigg).\notag
		\end{align}
		For the case $s\in (\frac{1}{2},\frac{3}{2}]$, by using \eqref{perturb} and \eqref{normdecr-b}, we have
		$$
		\begin{aligned}
			&
			\bigg\|\Pi_\tau \int_0^{\zeta}\fe^{i(\zeta-\xi)\p_x^2}(\Pi_\tau v_\tau(t_n+\xi) )^2\partial_x \Pi_\tau\overline{v}_\tau(t_n+\xi) \dd\xi \bigg\|_{X_\tau^{\frac{1}{2},\frac{1}{2}}}\\
			&\qquad \lesssim
			\tau \sup_{\xi \in[0,\tau]}  \Big\| \Pi_\tau \Big[ (\Pi_\tau v_\tau(t_n+\xi) )^2\partial_x \Pi_\tau\overline{v}_\tau(t_n+\xi) \Big] \Big\|_{X_\tau^{\frac{1}{2},\frac{1}{2}}} \\
			&\qquad
			\lesssim \tau^{1\over2} \sup_{\xi \in[0,\tau]}  \Big\| \Pi_\tau \Big[ (\Pi_\tau v_\tau(t_n+\xi) )^2\partial_x \Pi_\tau\overline{v}_\tau(t_n+\xi) \Big] \Big\|_{X_\tau^{\frac{1}{2},0}} . 
		\end{aligned}
		$$
		By applying the bilinear estimate $\|fg\|_{H^\frac{1}{2}}\lesssim \|f\|_{H^{\frac{1}{2}}}\|g\|_{H^s}$, H\"older's inequality, \eqref{eq:sobdis} and \eqref{normdecr-s}, we obtain that
		\begin{align}
			&\tau^{-1} \|\eta(\tfrac{t_{n+1}}{T_1})\mathcal{E}_{1,1}^{(2)}(t_{n+1})\|_{Z_\tau^{\frac{1}{2},-\frac{1}{2}}} \notag\\
			&\qquad\lesssim 
			\tau^{\frac{1}{2}} \sup_{\zeta,\xi\in[0,\tau]} \Big(  \Big\|\Pi_\tau \Big[(\Pi_\tau v_\tau(t_n+\xi) )^2\partial_x \Pi_\tau\overline{v}_\tau(t_n+\xi) \Big] \Big\|_{l_\tau^2 H^{1\over2}}\|
			v_\tau(t_n+\zeta)\|_{X_\tau^{\frac{1}{2},\frac{1}{2}}}^2\Big) \notag\\
			&\qquad\lesssim   \tau^{\frac{1}{2}} \sup_{\zeta,\xi\in[0,\tau]} \Big(  \|\Pi_\tau v_\tau(t_n+\xi) \|^2_{l_\tau^6 H^{s}} \|\Pi_\tau\overline{v}_\tau(t_n+\xi) \|_{l_\tau^6 H^{\frac{3}{2}}} \|v_\tau(t_n+\zeta)\|_{X_\tau^{\frac{1}{2},\frac{1}{2}}}^2\Big)\notag \\  
			&\qquad\lesssim   \tau^{\frac{1}{2}} \sup_{\zeta,\xi\in[0,\tau]} \Big(  \|\Pi_\tau v_\tau(t_n+\xi) \|^2_{X_\tau^{s,\frac{1}{2}}} \|\Pi_\tau {v}_\tau(t_n+\xi) \|_{X_\tau^{\frac{3}{2},\frac{1}{2}}}\|
			v_\tau(t_n+\zeta)\|_{X_\tau^{\frac{1}{2},\frac{1}{2}}}^2\Big)\notag \\
			&\qquad\lesssim   \tau^{\frac{s}{2}-\frac{1}{4}} \sup_{\zeta,\xi\in[0,\tau]} \Big(  \| v_\tau(t_n+\xi) \|^2_{X_\tau^{s,\frac{1}{2}}} \|\Pi_\tau {v}_\tau(t_n+\xi) \|_{X_\tau^{s,\frac{1}{2}}}\|
			v_\tau(t_n+\zeta)\|_{X_\tau^{\frac{1}{2},\frac{1}{2}}}^2\Big)     \notag\\
			&\qquad\leq C_T  \tau^{\frac{s}{2}-\frac{1}{4}}.\notag
		\end{align}
		For $s> \frac{3}{2}$, noting that $H^{s}\subset W^{1,\infty}$ and $\Pi_\tau \mathcal{E}_{1,1}^{(2)}=\mathcal{E}_{1,1}^{(2)}$, by using \eqref{eq:XtautoZtao} and \eqref{normdecr-s} we get 
		\begin{align}
			& \tau^{-1} \|\eta(\tfrac{t_{n+1}}{T_1})\mathcal{E}_{1,1}^{(2)}(t_{n+1})\|_{Z_\tau^{\frac{1}{2},-\frac{1}{2}}} \lesssim \tau^{-1} \|\eta(\tfrac{t_{n+1}}{T_1})\mathcal{E}_{1,1}^{(2)}(t_{n+1})\|_{X_\tau^{\frac{1}{2},0}}\lesssim \tau^{-\frac{5}{4}} \|\eta(\tfrac{t_{n+1}}{T_1})\mathcal{E}_{1,1}^{(2)}(t_{n+1})\|_{X_\tau^{0,0}} \notag \\
			&\qquad \lesssim \tau^{\frac{3}{4}} \sup_{\zeta,\xi\in[0,\tau]} \Big(\| v_\tau(t_n+\xi)^2\|_{l_\tau^2 L^2} \|\partial_x\overline{v}_\tau(t_n+\xi)\|_{l^\infty L^\infty}
			\|v_\tau(t_n+\zeta)\|_{l^\infty L^\infty} \|\partial_x \overline{v}_\tau(t_n+\zeta)\|_{l^\infty L^\infty}\Big) \notag \\
			&\qquad\leq C_T\tau^{\frac{3}{4}}. \notag
		\end{align}
		For $s>2$, by using \eqref{eq:XtautoZtao} and the bilinear estimate we have
		\begin{align}
			&\tau^{-1} \|\eta(\tfrac{t_{n+1}}{T_1})\mathcal{E}_{1,1}^{(2)}(t_{n+1})\|_{Z_\tau^{\frac{1}{2},-\frac{1}{2}}} \lesssim \tau^{-1} \|\eta(\tfrac{t_{n+1}}{T_1})\mathcal{E}_{1,1}^{(2)}(t_{n+1})\|_{X_\tau^{\frac{1}{2},0}} \notag \\
			& \qquad\lesssim  \tau \sup_{\zeta,\xi\in[0,\tau]} \|(\Pi_\tau v_\tau(t_n+\xi) )^2\partial_x \Pi_\tau\overline{v}_\tau(t_n+\xi) \Pi_\tau v_\tau(t_n+\zeta)\p_x\Pi_\tau\overline{v}_\tau(t_n+\zeta)\|_{l^2_\tau H^{1\over2}} \notag \\
			&\qquad \lesssim  \tau  \sup_{\zeta,\xi\in[0,\tau]} \Big( \|v_\tau(t_n+\xi)^2\|_{l_\tau^2 H^{ {1\over2}}} \|   \partial _x \overline{v}_\tau(t_n+\xi)\|_{l^\infty {H^1}}  \|   v_\tau(t_n+\zeta)\|_{l^\infty {H^1}} \|  \partial _x  \overline{v}_\tau(t_n+\zeta)\|_{l^\infty {H^1}}\Big)\notag \\
			&\qquad\leq  C_T \tau. \notag
		\end{align}
		In summary, we have 
		$$
		\tau^{-1} \|\eta(\tfrac{t_{n+1}}{T_1})\mathcal{E}_{1,1}^{(2)}(t_{n+1})\|_{Z_\tau^{\frac{1}{2},-\frac{1}{2}}} \leq C_T 
		\tau^{\frac{s}{2}-\frac{1}{4}},\quad s \in (\tfrac{1}{2},\tfrac{5}{2}].
		$$
		
		For $\mathcal{E}_{1,1}^{(3)}$, by \eqref{perturb}, \eqref{eq:gen_tri}, \eqref{normdecr-b}  and \eqref{normdecr-s}, we obtain that, for $s>\frac{1}{2}$,
		\begin{align}
			&\tau^{-1} \|\eta(\tfrac{t_{n+1}}{T_1})\mathcal{E}_{1,1}^{(3)}(t_{n+1})\|_{Z_\tau^{\frac{1}{2},-\frac{1}{2}}}\notag  \\
			&\qquad\lesssim
			\sup_{\zeta\in[0,\tau]}\bigg( \bigg\|\Pi_\tau \int_0^{\zeta}\fe^{i(\zeta-\xi)\p_x^2} (|\Pi_\tau v_\tau(t_n+\xi)|^4\Pi_\tau v_\tau(t_n+\xi)) \dd\xi \bigg\|_{X_\tau^{\frac{1}{2},\frac{1}{2}}} \|
			\Pi_\tau v_\tau(t_n+\zeta)\|_{X_\tau^{\frac{1}{2},\frac{1}{2}}}^2\bigg) \notag \\
			&\qquad \lesssim  \tau^{1\over4} \sup_{\zeta,\xi\in[0,\tau]}\Big( \|\Pi_\tau (|\Pi_\tau v_\tau(t_n+\xi)|^4\Pi_\tau v_\tau(t_n+\xi)) \|_{X_\tau^{0,0}}  \|\Pi_\tau v_\tau(t_n+\zeta)\|_{X_\tau^{\frac{1}{2},\frac{1}{2}}}^2\Big) 
			\notag \\
			&\qquad \lesssim  \tau^{1\over4}\sup_{\zeta,\xi\in[0,\tau]} \Big( \|  v_\tau(t_n+\xi)\|^4_{l^\infty L^\infty} \| v_\tau(t_n+\xi) \|_{l_\tau^2 L^2}\| v_\tau(t_n+\zeta)\|_{X_\tau^{\frac{1}{2},\frac{1}{2}}}^2 \Big)
			\leq C_T \tau^{1\over4}. \notag 
		\end{align}
		For $s> 1$, by using \eqref{eq:XtautoZtao} and \eqref{normdecr-s} we get that
		\begin{align}
			&\tau^{-1} \|\eta(\tfrac{t_{n+1}}{T_1})\mathcal{E}_{1,1}^{(3)}(t_{n+1})\|_{Z_\tau^{\frac{1}{2},-\frac{1}{2}}}\lesssim \tau^{- 1} \|\eta(\tfrac{t_{n+1}}{T_1})\mathcal{E}_{1,1}^{(3)}(t_{n+1})\|_{X_\tau^{\frac{1}{2},0}} \lesssim \tau^{- {5\over4}} \|\eta(\tfrac{t_{n+1}}{T_1})\mathcal{E}_{1,1}^{(3)}(t_{n+1})\|_{X_\tau^{0,0}}\notag  \\
			&\qquad \lesssim \tau^{3\over4} \sup_{\zeta,\xi\in[0,\tau]} \Big(  \| v_\tau(t_n+\xi)\|_{l^\infty L^\infty}^5  \| v_\tau(t_n+\zeta)\|_{l^\infty L^\infty} \|\overline{v}_\tau(t_n+\zeta)\|_{X_\tau^{1,0}} \Big) \leq C_T \tau^{3\over4} . \notag
		\end{align}
		For $s>2$, by using \eqref{eq:XtautoZtao} and the bilinear estimate we have
		\begin{align}
			& \tau^{-1} \|\eta(\tfrac{t_{n+1}}{T_1})\mathcal{E}_{1,1}^{(3)}(t_{n+1})\|_{Z_\tau^{\frac{1}{2},-\frac{1}{2}}}\lesssim \tau^{-1} \|\eta(\tfrac{t_{n+1}}{T_1})\mathcal{E}_{1,1}^{(3)}(t_{n+1})\|_{X_\tau^{{1\over2},0}} \notag  \\
			&\qquad \lesssim  \tau  \sup_{\zeta,\xi\in[0,\tau]} \Big( \| |v_\tau(t_n+\xi)|^4\|_{l_\tau^2  H^{ {1\over2}}} \| {v}_\tau(t_n+\xi)\|_{l^\infty {H^1} }   \| v_\tau(t_n+\zeta)\|_{l^\infty {H^1} } \| \partial_x \overline{v}_\tau(t_n+\zeta)\|_{l^\infty {H^1} }\Big)  \notag \\
			&\qquad\leq  C_T \tau. \notag
		\end{align}
		Combining the above estimates, we have
		$$
		\tau^{-1} \|\eta(\tfrac{t_{n+1}}{T_1})\mathcal{E}_{1,1}^{(3)}(t_{n+1})\|_{Z_\tau^{\frac{1}{2},-\frac{1}{2}}} \leq C_T 
		\tau^{\frac{s}{2}-\frac{1}{4}},\quad s \in (\tfrac{1}{2},\tfrac{5}{2}].
		$$
		Note that $\mathcal{E}_{1,1}^{(4)}$ can be similarly treated. 
		%Thus we have
		%$$
		% \tau^{-1} \|\eta(\tfrac{t_{n+1}}{T_1})\mathcal{E}_{1,1}^{(4)}(t_{n+1})\|_{Z_\tau^{\frac{1}{2},-\frac{1}{2}}} \leq C_T 
		%  \tau^{\frac{s}{2}-\frac{1}{4}},\quad s \in (\tfrac{1}{2},\tfrac{5}{2}].
		%$$
		
		For $\mathcal{E}_{1,1}^{(5)}$, by \eqref{perturb}, \eqref{eq:gen_tri}, \eqref{normdecr-b}  and \eqref{eq:gen_psi_est} and noting that $\psi(0)=0$, we obtain that, for $s>\frac{1}{2}$,
		\begin{align}
			&\tau^{-1} \|\eta(\tfrac{t_{n+1}}{T_1})\mathcal{E}_{1,1}^{(5)}(t_{n+1})\|_{Z_\tau^{\frac{1}{2},-\frac{1}{2}}}\notag  \\
			&\qquad\lesssim
			\sup_{\zeta\in[0,\tau]}\bigg( \bigg\|\Pi_\tau \int_0^{\zeta}\fe^{i(\zeta-\xi)\p_x^2} i\psi(\Pi_\tau v_\tau(t_n+\xi)) v_\tau(t_n+\xi) \dd\xi \bigg\|_{X_\tau^{\frac{1}{2},\frac{1}{2}}} \|
			\Pi_\tau v_\tau(t_n+\zeta)\|_{X_\tau^{\frac{1}{2},\frac{1}{2}}}^2\bigg) \notag \\
			&\qquad \lesssim \tau^{1\over2}\sup_{\zeta,\xi\in[0,\tau]} \Big( \|\psi(\Pi_\tau v_\tau(t_n+\xi)) v_\tau(t_n+\xi)  \|_{X_\tau^{{1\over2},0}}  \|
			v_\tau(t_n+\zeta)\|_{X_\tau^{\frac{1}{2},\frac{1}{2}}}^2\Big)
			\notag \\
			&\qquad \lesssim   \tau^{1\over2} \sup_{\zeta,\xi\in[0,\tau]} \bigg(\big(1+\|v_\tau(t_n+\xi)\|_{X_\tau^{\frac{1}{2},\frac{1}{2}}\cap Z_\tau^{0,\frac{1}{2}}}\big)^3  \|v_\tau(t_n+\xi)\|_{X_\tau^{\frac{1}{2},\frac{1}{2}}\cap Z_\tau^{0,\frac{1}{2}}}\|v_\tau(t_n+\xi)\|_{X_\tau^{\frac{1}{2},\frac{1}{2}}}  \notag \\
			&\qquad\qquad\qquad\qquad\quad\times \|
			v_\tau(t_n+\zeta)\|_{X_\tau^{\frac{1}{2},\frac{1}{2}}}^2\bigg)\notag \\
			&\qquad \leq C_T \tau^{1\over2}. \notag  
		\end{align}
		For $s>\frac{3}{2}$, by \eqref{eq:XtautoZtao}, \eqref{normdecr-s} and \eqref{Eqn:psil4} we have
		\begin{align}
			&\tau^{-1} \|\eta(\tfrac{t_{n+1}}{T_1})\mathcal{E}_{1,1}^{(5)}(t_{n+1})\|_{Z_\tau^{\frac{1}{2},-\frac{1}{2}}}\lesssim \tau^{-1} \|\eta(\tfrac{t_{n+1}}{T_1})\mathcal{E}_{1,1}^{(5)}(t_{n+1})\|_{X_\tau^{{1\over2},0}} \lesssim \tau^{- \frac{5}{4}} \|\eta(\tfrac{t_{n+1}}{T_1})\mathcal{E}_{1,1}^{(5)}(t_{n+1})\|_{X_\tau^{0,0}}  \notag  \\
			&\qquad \lesssim \tau^{\frac{3}{4}}  \sup_{\zeta,\xi\in[0,\tau]} \big( \|\psi(\Pi_\tau v_\tau(t_n+\xi))\|_{l_\tau^4} \|v_\tau(t_n+\xi)\|_{l_\tau^4L^2}
			\| v_\tau(t_n+\zeta)\|_{l^\infty L^\infty} \|\p_x \overline{v}_\tau(t_n+\zeta) \|_{l^\infty L^\infty} \big) \notag \\
			&\qquad\leq  C_T \tau^{\frac{3}{4}}. \notag
		\end{align}
		For $s>2$, by using \eqref{eq:XtautoZtao} and the bilinear estimate we have
		\begin{align}
			&\tau^{-1} \|\eta(\tfrac{t_{n+1}}{T_1})\mathcal{E}_{1,1}^{(5)}(t_{n+1})\|_{Z_\tau^{\frac{1}{2},-\frac{1}{2}}}\lesssim \tau^{-1} \|\eta(\tfrac{t_{n+1}}{T_1})\mathcal{E}_{1,1}^{(5)}(t_{n+1})\|_{X_\tau^{{1\over2},0}}  \notag  \\
			&\qquad \lesssim \tau \sup_{\zeta,\xi\in[0,\tau]} \big( \|\psi(\Pi_\tau v_\tau(t_n+\xi))\|_{l_\tau^4} \|v_\tau(t_n+\xi)\|_{l_\tau^4 H^{{1\over2}}} 
			\|   v_\tau(t_n+\zeta)\|_{l^\infty  {H^1}} \| \p_x \overline{v}_\tau(t_n+\zeta) \|_{l^\infty {H^1}} \big)  \notag \\
			&\qquad\leq  C_T \tau. \notag
		\end{align}
		These estimates give
		$$
		\tau^{-1} \|\eta(\tfrac{t_{n+1}}{T_1})\mathcal{E}_{1,1}^{(5)}(t_{n+1})\|_{Z_\tau^{\frac{1}{2},-\frac{1}{2}}} \leq C_T 
		\tau^{\frac{s}{2}-\frac{1}{4}},\quad s \in (\tfrac{1}{2},\tfrac{5}{2}].
		$$
		Note that the above estimates are easily extended for $\mathcal{E}_{1,2}$. %Consequently, we arrive at
		%$$
		% \tau^{-1} \|\eta(\tfrac{t_{n+1}}{T_1})\mathcal{E}_{1,1}(t_{n+`})\|_{Z_\tau^{\frac{1}{2},-\frac{1}{2}}}+ \tau^{-1} \|\eta(\tfrac{t_{n+1}}{T_1})\mathcal{E}_{1,2}(t_{n+1})\|_{Z_\tau^{\frac{1}{2},-\frac{1}{2}}} \leq C_T 
		%  \tau^{\frac{s}{2}-\frac{1}{4}},\quad s \in  (\tfrac{1}{2},\tfrac{5}{2}].
		%$$
		
		We now consider $\mathcal{E}_{1,3}$, given as
		\begin{align}
			\mathcal{E}_{1,3}(t_{n+1})
			&=-\int_0^\tau\fe^{-i\zeta\p_x^2}\Pi_{\tau}\bigg[ 
			(\Pi_\tau v_\tau(t_n) )^2\p_x\Pi_\tau (\e^{-i\zeta\partial_x^2}-1)\overline{v}_\tau(t_n)  \bigg]\dd\zeta \notag\\
			&\quad+\int_0^\tau\fe^{-i\zeta\p_x^2}\Pi_{\tau}\bigg[ (\Pi_\tau v_\tau(t_n) )^2\p_x\Pi_\tau \int_0^{\zeta}\fe^{-i(\zeta-\xi)\p_x^2}\overline{(\Pi_\tau v_\tau(t_n+\xi) )^2\partial_x \Pi_\tau\overline{v}_\tau(t_n+\xi)} \dd\xi   \bigg]\dd\zeta \notag \\
			&\quad+\int_0^\tau\fe^{-i\zeta\p_x^2}\Pi_{\tau}\bigg[ (\Pi_\tau v_\tau(t_n))^2 \p_x\Pi_\tau \int_0^{\zeta}\fe^{-i(\zeta-\xi)\p_x^2}\frac{i}{2}\overline{(|\Pi_\tau v_\tau(t_n+\xi)|^4\Pi_\tau v_\tau(t_n+\xi))}\dd\xi 
			\bigg]\dd\zeta \notag \\
			&\quad-\int_0^\tau\fe^{-i\zeta\p_x^2}\Pi_{\tau}\bigg[(\Pi_\tau v_\tau(t_n))^2 \partial_x \Pi_\tau \int_0^{\zeta}\fe^{-i(\zeta-\xi)\p_x^2}i\mu  \overline{(|\Pi_\tau v_\tau(t_n+\xi)|^2\Pi_\tau v_\tau(t_n+\xi))}\dd\xi  \bigg]\dd\zeta \notag \\
			&\quad+\int_0^\tau\fe^{-i\zeta\p_x^2}\Pi_{\tau}\bigg[ (\Pi_\tau v_\tau(t_n))^2 \partial_x \Pi_\tau \int_0^{\zeta}\fe^{-i(\zeta-\xi)\p_x^2}i\overline{\psi(\Pi_\tau v_\tau(t_n+\xi)) \Pi_\tau v_\tau(t_n+\xi)}\dd\xi   \bigg]\dd\zeta \notag \\
			&=: \mathcal{E}_{1,3}^{(1)}(t_{n+1})+\mathcal{E}_{1,3}^{(2)}(t_{n+1})+\mathcal{E}_{1,3}^{(3)}(t_{n+1})+\mathcal{E}_{1,3}^{(4)}(t_{n+1})+\mathcal{E}_{1,3}^{(5)}(t_{n+1}). \label{Eqn:E13-12345}
		\end{align}
		Similar to the estimate for $\mathcal{E}_{1,1}^{(1)}$ presented before, we can derive that
		$$
		\tau^{-1} \|\eta(\tfrac{t_{n+1}}{T_1})\mathcal{E}_{1,3}^{(1)}(t_{n+1})\|_{Z_\tau^{\frac{1}{2},-\frac{1}{2}}}  \leq C_T \tau^{\frac{s}{2}-\frac{1}{4}}.
		$$
		For $\mathcal{E}_{1,3}^{(2)}$, if $s\in(\frac{1}{2},\frac{3}{2}]$, following a similar procedure as before, we establish 
		$$
		\tau^{-1} \|\eta(\tfrac{t_{n+1}}{T_1})\mathcal{E}_{1,3}^{(2)}(t_{n+1})\|_{Z_\tau^{\frac{1}{2},-\frac{1}{2}}}\leq C_T \tau^{\frac{s}{2}-\frac{1}{4}}.
		$$
		For $s \in (\frac{3}{2},2]$, noting that $H^{s}\subset W^{1,\infty}$, by using \eqref{eq:XtautoZtao}, \eqref{normdecr-s} and the bilinear estimate we get that
		\begin{align}
			& \tau^{-1} \|\eta(\tfrac{t_{n+1}}{T_1})\mathcal{E}_{1,3}^{(2)}(t_{n+1})\|_{Z_\tau^{\frac{1}{2},-\frac{1}{2}}} \lesssim \tau^{-1} \|\eta(\tfrac{t_{n+1}}{T_1})\mathcal{E}_{1,3}^{(2)}(t_{n+1})\|_{X_\tau^{\frac{1}{2},0}}\lesssim \tau^{-\frac{5}{4}} \|\eta(\tfrac{t_{n+1}}{T_1})\mathcal{E}_{1,3}^{(2)}(t_{n+1})\|_{X_\tau^{0,0}} \notag \\
			&\qquad \lesssim \tau^{\frac{3}{4}} \sup_{\zeta,\xi\in[0,\tau]} \Big(\| v_\tau(t_n)\|^2_{l^\infty L^\infty} \| \overline{v}_\tau(t_n+\xi)\|_{l^\infty H^1}^2  \|  \Pi_\tau {v}_\tau(t_n+\zeta)\|_{X_\tau^{2,0}} \Big) \notag \\
			&\qquad\lesssim \tau^{\frac{s}{2}-\frac{1}{4}}  \sup_{\zeta,\xi\in[0,\tau]} \Big(\| v_\tau(t_n)\|^2_{l^\infty L^\infty} \| \overline{v}_\tau(t_n+\xi)\|_{l^\infty H^1}^2  \|  \Pi_\tau {v}_\tau(t_n+\zeta)\|_{X_\tau^{s,0}} \Big) \leq C_T \tau^{\frac{s}{2}-\frac{1}{4}} . \notag
		\end{align}
		For $s>2$, we have
		\begin{align}
			& \tau^{-1} \|\eta(\tfrac{t_{n+1}}{T_1})\mathcal{E}_{1,3}^{(2)}(t_{n+1})\|_{Z_\tau^{\frac{1}{2},-\frac{1}{2}}} \lesssim \tau^{-1} \|\eta(\tfrac{t_{n+1}}{T_1})\mathcal{E}_{1,3}^{(2)}(t_{n+1})\|_{X_\tau^{\frac{1}{2},0}} \notag \\
			&\qquad \lesssim  \tau  \sup_{\zeta,\xi\in[0,\tau]} \Big(\|v_\tau(t_n)\|^2_{l^\infty {H^1}} \| \overline{v}_\tau(t_n+\xi)\|_{l^\infty {H^2}}^2  \|  \Pi_\tau \overline{v}_\tau(t_n+\zeta)\|_{X^{{5\over 2},0}_\tau}\Big)    \notag \\
			&\qquad\lesssim  \tau^{\frac{s}{2}-\frac{1}{4}}  \sup_{\zeta,\xi\in[0,\tau]} \Big(\|v_\tau(t_n)\|^2_{l^\infty {H^1} } \| \overline{v}_\tau(t_n+\xi)\|_{l^\infty {H^2}}^2    \|\Pi_\tau  \overline{v}_\tau(t_n+\zeta)\|_{X^{s,0}_\tau}\Big)  \leq C_T \tau^{\frac{s}{2}-\frac{1}{4}} . \notag
		\end{align}
		A combination of the above estimates gives
		$$
		\tau^{-1} \|\eta(\tfrac{t_{n+1}}{T_1})\mathcal{E}_{1,3}^{(2)}(t_{n+1})\|_{Z_\tau^{\frac{1}{2},-\frac{1}{2}}}  \leq C_T \tau^{\frac{s}{2}-\frac{1}{4}} ,\quad s \in (\tfrac{1}{2},\tfrac{5}{2}].
		$$
		Proceeding along the same lines, we can bound the remaining terms in \eqref{Eqn:E13-12345} to obtain
		$$
		\tau^{-1} \|\eta(\tfrac{t_{n+1}}{T_1})\mathcal{E}_{1,3}(t_{n+1})\|_{Z_\tau^{\frac{1}{2},-\frac{1}{2}}}    \leq C_T \tau^{\frac{s}{2}-\frac{1}{4}} . 
		$$
		In conclusion, we arrive at
		$$
		\tau^{-1} \|\eta(\tfrac{t_{n+1}}{T_1})\mathcal{E}_{1} (t_{n+1})\|_{Z_\tau^{\frac{1}{2},-\frac{1}{2}}}    \leq C_T \tau^{\frac{s}{2}-\frac{1}{4}} ,\quad s \in  (\tfrac{1}{2},\tfrac{5}{2}].
		$$
		
		\medskip
		
		\noindent {(b)~Estimate of} $\tau^{-1}\|\eta(\frac{t_{n+1}}{T_1})\mathcal{E}_i(t_{n+1})\|_{Z_\tau^{\frac{1}{2},-\frac{1}{2}}}$, $i=2,3$.
		
		The related estimates for $\mathcal{E}_{i}$, $i=2,3$ can be obtained in a similar way. %We have
		%$$
		%\tau^{-1}\|\eta(\tfrac{t_{n+1}}{T_1})\mathcal{E}_{2}(t_{n+1})\|_{Z_{\tau}^{\frac{1}{2},-\frac{1}{2}}}+\tau^{-1}\|\eta(\tfrac{t_{n+1}}{T_1})\mathcal{E}_{3}(t_{n+1})\|_{Z_{\tau}^{\frac{1}{2},-\frac{1}{2}}} \leq C_T \tau^{\frac{s}{2}-\frac{1}{4}}.
		%$$
		
		\medskip
		
		\noindent  {(c)~Estimate of} $\tau^{-1}\|\eta(\frac{t_{n+1}}{T_1})\mathcal{E}_4(t_{n+1})\|_{Z_\tau^{\frac{1}{2},-\frac{1}{2}}}$.
		
		We first rewrite $\mathcal{E}_4$ as
		$$
		\begin{aligned}
			\mathcal{E}_{4}(t_{n+1})&=i  \int_0^\tau\fe^{-i\zeta\p_x^2}\Big[\psi(\Pi_\tau v_\tau(t_n+\zeta))-\psi(\Pi_\tau v_\tau(t_n))   \Big]\Pi_\tau v_\tau(t_n+\zeta)\dd\zeta \\
			&\quad+i \int_0^\tau\fe^{-i\zeta\p_x^2} \psi(\Pi_\tau v_\tau(t_n)) \Big[\Pi_\tau v_\tau(t_n+\zeta)-\Pi_\tau v_\tau(t_n)\Big]\dd\zeta =:\mathcal{E}_{4,1}(t_{n+1})+\mathcal{E}_{4,2}(t_{n+1}).
		\end{aligned}
		$$
		%Thanks to \eqref{eq:linfty_embdis}-\eqref{eq:x_in_ydis} and \eqref{bounddis}, it holds 
		%$$
		%\|v_\tau(t_n)\|_{l^\infty H^{1\over2}}     \lesssim  \tau^{-\epsilon} \|v_{\tau}(t_{n})  \|_{X^{s, {1 \over 2}+\epsilon}_{\tau}}\lesssim_T \tau^{-\epsilon}.
		%$$
		Thanks to Proposition~\ref{diffpj}, we have $\|v_\tau\|_{L_t^\infty H^{1\over2}}\leq C_T$. Noting the mass conservation property \eqref{Eqn:mass-vtau} of \eqref{pjgdnls}, we derive from \eqref{Eeqn:recall-psi} that
		$$
		\begin{aligned}
			|\psi(\Pi_\tau v_\tau(t_n+\zeta)) - \psi(\Pi_\tau v_\tau(t_n))| &\lesssim   ( 1 + \|\Pi_\tau v_\tau(t_n+\zeta)\|_{H^{\frac{1}{2}}} + \|\Pi_\tau v_\tau(t_n)\|_{H^{\frac{1}{2}}} )^3 \\
			&\qquad \times \|\Pi_\tau v_\tau(t_n+\zeta)-\Pi_\tau v_\tau(t_n)\|_{H^{\frac{1}{2}}}.
		\end{aligned}
		$$
		For $\mathcal{E}_{4,1}$ and $\mathcal{E}_{4,2}$, using \eqref{eq:XtautoZtao}, the above inequality, and the fact that $\psi(0)=0$, we have
		\begin{align}
			&\tau^{-1}\|\eta(\tfrac{t_{n+1}}{T_1})\mathcal{E}_{4,1}(t_{n+1})\|_{Z_{\tau}^{\frac{1}{2},-\frac{1}{2}}} \lesssim \tau^{-1}\|\eta(\tfrac{t_{n+1}}{T_1})\mathcal{E}_{4,1}(t_{n+1})\|_{X_{\tau}^{\frac{1}{2},0}} \notag \\
			&\qquad \lesssim \sup_{\zeta\in[0,\tau]} \Big\| \Big[\psi(\Pi_\tau v_\tau(t_n+\zeta))-\psi(\Pi_\tau v_\tau(t_n))   \Big]\Pi_\tau v_\tau(t_n+\zeta)\Big\|_{X_{\tau}^{\frac{1}{2},0}}   \notag \\
			& \qquad 
			\lesssim    \sup_{\zeta\in[0,\tau]} \Big(\| \psi(\Pi_\tau v_\tau(t_n+\zeta))-\psi(\Pi_\tau v_\tau(t_n))   \|_{l_\tau^2} \| \Pi_\tau v_\tau(t_n+\zeta)\|_{l^\infty H^{\frac{1}{2}}}    \Big) \notag\\
			& \qquad 
			\lesssim   \sup_{\zeta\in[0,\tau]}  \Big( \Big\| \big( 1+ \| \Pi_\tau v_\tau(t_n+\zeta) \|_{H^{\frac{1}{2}}} +   \| \Pi_\tau v_\tau(t_n) \|_{H^{\frac{1}{2}}}\big)^3 \| \Pi_\tau v_\tau(t_n+\zeta)- \Pi_\tau v_\tau(t_n)   \|_{H^{\frac{1}{2}}} \Big\|_{l_\tau^2} \notag \\
			&\qquad\qquad\qquad\quad \times \| \Pi_\tau v_\tau(t_n+\zeta)\|_{l^\infty H^{\frac{1}{2}}}    \Big) \notag\\
			&\qquad\leq  C_T \sup_{\zeta\in[0,\tau]} \|\Pi_\tau v_\tau(t_n+\zeta)-\Pi_\tau v_\tau(t_n)\|_{X_\tau^{{1\over2},0}},\notag\\
			&\tau^{-1}\|\eta(\tfrac{t_{n+1}}{T_1})\mathcal{E}_{4,2}(t_{n+1})\|_{Z_{\tau}^{\frac{1}{2},-\frac{1}{2}}} \lesssim \tau^{-1}\|\eta(\tfrac{t_{n+1}}{T_1})\mathcal{E}_{4,2}(t_{n+1})\|_{X_{\tau}^{\frac{1}{2},0}} \notag \\
			&\qquad\lesssim    \sup_{\zeta\in[0,\tau]} \|\psi(\Pi_\tau v_\tau(t_n))\|_{l^\infty} \|\Pi_\tau v_\tau(t_n+\zeta)-\Pi_\tau v_\tau(t_n)\|_{X_\tau^{{1\over2},0}}\notag \\
			&\qquad\lesssim  \sup_{\zeta\in[0,\tau]}  \Big( (1+ \|\Pi_\tau v_\tau(t_n)\|_{l^\infty H^{\frac{1}{2}}} )^3 \|\Pi_\tau v_\tau(t_n)\|_{l^\infty H^{\frac{1}{2}}} \|\Pi_\tau v_\tau(t_n+\zeta)-\Pi_\tau v_\tau(t_n)\|_{X_\tau^{{1\over2},0}} \Big) \notag \\
			&\qquad\leq C_T \sup_{\zeta\in[0,\tau]}  \|\Pi_\tau v_\tau(t_n+\zeta)-\Pi_\tau v_\tau(t_n)\|_{X_\tau^{{1\over2},0}} .\notag
		\end{align}
		Consequently, we only need to estimate $\|\Pi_\tau v_\tau(t_n+\zeta)-\Pi_\tau v_\tau(t_n)\|_{X_\tau^{{1\over2},0}}$. By Duhamel's formula, we arrive at
		\begin{align}
			&\Pi_\tau v_\tau(t_n+\zeta)-\Pi_\tau v_\tau(t_n) \notag\\
			&\qquad= \Pi_\tau(\fe^{i\zeta\p_x^2}-1)v_\tau(t_n) -\int_0^{\zeta}\fe^{i(\zeta-\xi)\p_x^2}\Pi_\tau \Big[(\Pi_\tau v_\tau(t_n+\xi))^2\partial_x \Pi_\tau\overline{v}_\tau(t_n+\xi)\Big]\dd\xi \notag\\
			&\qquad\quad+\frac{i}{2}\int_0^{\zeta}\fe^{i(\zeta-\xi)\p_x^2} \Pi_\tau(|\Pi_\tau v_\tau(t_n+\xi)|^4\Pi_\tau v_\tau(t_n+\xi)) \dd\xi \notag\\
			&\qquad\quad- i \mu \int_0^{\zeta}\fe^{i(\zeta-\xi)\p_x^2} \Pi_\tau(|\Pi_\tau v_\tau(t_n+\xi)|^2\Pi_\tau v_\tau(t_n+\xi))\dd\xi \notag\\
			&\qquad\quad+i \int_0^{\zeta}\fe^{i(\zeta-\xi)\p_x^2} \psi(\Pi_\tau v_\tau(t_n+\xi))\Pi_\tau v_\tau(t_n+\xi) \dd\xi \notag\\
			&\qquad=:\mathcal{E}_4^{(1)} (t_{n+1})+\mathcal{E}_4^{(2)}(t_{n+1})+\mathcal{E}_4^{(3)}(t_{n+1})+\mathcal{E}_4^{(4)}(t_{n+1})+\mathcal{E}_4^{(5)}(t_{n+1}). \notag
		\end{align}
		We focus on $\mathcal{E}_4^{(2)}$, as the remaining terms can be estimated by employing the techniques used in part (a). By \eqref{normdecr-b} and Theorem~\ref{Thm:multiplier1}, for $s>\frac{1}{2}$ we have
		$$
		\|\eta(\tfrac{t_{n+1}}{T_1})\mathcal{E}_{4}^{(2)}(t_{n+1})\|_{X_\tau^{{1\over2},0}} \lesssim \tau^{-\frac{1}{2}} \|\eta(\tfrac{t_{n+1}}{T_1})\mathcal{E}_{4}^{(2)}(t_{n+1})\|_{X_\tau^{{1\over2},-\frac{1}{2}}} \lesssim \tau^{\frac{1}{2}}  \sup_{\xi\in[0,\tau]}\Big( \|v_\tau(t_n+\xi)\|^2_{X_\tau^{\frac{1}{2},\frac{1}{2}}} \|{v}_\tau(t_n+\xi)\|_{X_\tau^{\frac{1}{2},\frac{1}{2}}} \Big).
		$$
		For $s>\frac{3}{2}$, noting that $H^{s}\subset W^{1,\infty}$ and using \eqref{normdecr-s}, one has
		$$
		\|\eta(\tfrac{t_{n+1}}{T_1})\mathcal{E}_{4}^{(2)}(t_{n+1})\|_{X_\tau^{{1\over2},0}} \lesssim \tau^{-\frac{1}{4}} \|\eta(\tfrac{t_{n+1}}{T_1})\mathcal{E}_{4}^{(2)}(t_{n+1})\|_{X_\tau^{0,0}} \lesssim \tau^{\frac{3}{4}}  \sup_{\xi\in[0,\tau]}\Big( \|{v}_\tau(t_n+\xi)\|^2_{l^\infty L^\infty} \|\partial_x \overline{v}_\tau(t_n+\xi)\|_{l_\tau^2L^2}\Big).
		$$
		For $s>2$, we arrive at
		$$
		\|\eta(\tfrac{t_{n+1}}{T_1})\mathcal{E}_{4}^{(2)}(t_{n+1})\|_{X_\tau^{{1\over2},0}} \lesssim  \tau  \sup_{\xi\in[0,\tau]}\Big( \|{v}_\tau(t_n+\xi)\|^2_{l^\infty { H^{1}}} \|\partial_x  \overline{v}_\tau(t_n+\xi)\|_{l_\tau^2 H^{1\over2}}\Big).
		$$
		In summary, it holds
		$$
		\|\eta(\tfrac{t_{n+1}}{T_1})\mathcal{E}_{4}^{(2)}(t_{n+1})\|_{X_\tau^{{1\over2},0}} \leq C_T \tau^{\frac{s}{2}-\frac{1}{4}},\quad s\in(\tfrac{1}{2},\tfrac{5}{2}].
		$$
		Thus, the proof is completed.
	\end{proof}

	%%%%%%%%%%%%%%%%%%%%%%%%%%%
	\section{Global error analysis}\label{sectionglobal}
	%%%%%%%%%%%%%%%%%%%%%%%%%%%
	Let $e^n=v_\tau(t_n)-v^n$ denote the global error for the projected dNLS equation \eqref{pjgdnls}. Thanks to Proposition~\ref{diffpj}, we have
	$$
	\|v(t_n)-v^n\|_{H^{1\over2}} \leq \|v(t_n)-v_\tau(t_n)\|_{H^{1\over2}}+\|v_\tau(t_n)-v^n\|_{H^{1\over2}} \leq C_T\tau^{\frac{s}{2}-\frac{1}{4}} + \|e^n\|_{H^{1\over2}}.
	$$
	It remains to estimate $e^n$, which can be rewritten as
	$$
	\begin{aligned}
		e^n&=  \Phi^\tau(v_\tau(t_{n-1}))  - \Phi^\tau(v^{n-1}) +\mathcal{E}_{loc}(t_{n}) \\
		&=\e^{i\tau \partial_x^2} e^{n-1} +\tau \e^{i \tau \partial_x^2}\varphi_1(-i\tau \partial_x^2) (F_\tau(v_\tau(t_{n-1}))- F_\tau(v^{n-1}) ) 
		+\mathcal{E}_{loc}(t_{n}) \\
		&=\cdots = \tau\sum_{k=0}^{n-1} {\rm e}^{i(n-k)\tau\p_x^2}\varphi_1(-i\tau\p_x^2) ( F_\tau(v_\tau(t_{k})) -F_\tau(v^{k})  ) +
		\sum_{k=0}^{n-1} {\rm e}^{i(n-k-1)\tau\p_x^2} \mathcal{E}_{loc}(t_{k+1}),
	\end{aligned}
	$$
	in which $e^0=0$ was used.
	Using the properties of $\eta$ we find that $e^n$ satisfies: for $0\leq n \leq N_1= \lfloor \frac{T_1}{\tau} \rfloor$ with $T_1\leq \min\{1,T\}$,
	\begin{equation}\label{Eqn:en-formula}
		e^n= \tau\eta(t_n)\sum_{k=0}^{n-1} {\rm e}^{i(n-k)\tau \p_x^2}\eta\bigg(\frac{t_k}{T_1}\bigg) \varphi_1(-i\tau\p_x^2) ( F_\tau(v_\tau(t_{k})) -F_\tau(v^{k})  ) +\mathcal{R}_n,
	\end{equation}
	where
	$$
	\mathcal{R}_n=   \eta(t_n)\sum_{k=0}^{n-1} {\rm e}^{i(n-k-1)\tau \p_x^2} \eta\bigg(\frac{t_{k+1}}{T_1}\bigg) \mathcal{E}_{loc}(t_{k+1}).
	$$
	
	By using \eqref{Eqn:discrete-convolution} and Lemma~\ref{Lem:local-error}, we derive from \eqref{Eqn:en-formula} that
	\begin{equation}\label{Eqn:error-conclusion-1}
		\|e^n\|_{Z_\tau^{\frac{1}{2},\frac{1}{2}}} \leq  C_T   \big\| \eta (\tfrac{t_n}{T_1} ) \big( F_\tau(v_\tau(t_{n})) -F_\tau(v^{n})  \big)  \big\|_{Z_\tau^{\frac{1}{2},-\frac{1}{2}}}  + C_T \tau^{\frac{s}{2}-\frac{1}{4}},
	\end{equation}
	where the first term on the right-hand side is bounded by
	\begin{equation}\label{Eqn:Z1/21/2toZ1/21/2}
		\begin{aligned}
			& \big\| \eta (\tfrac{t_n}{T_1} ) \big( F_\tau(v_\tau(t_{n})) -F_\tau(v^{n})  \big)  \big\|_{Z_\tau^{\frac{1}{2},-\frac{1}{2}}} \\
			&\qquad\lesssim \Big\|\eta(\tfrac{t_n}{T_1}) \Pi_\tau \Big[(\Pi_\tau v_\tau(t_n))^2\p_x\Pi_\tau\overline{v}_\tau(t_n)-(\Pi_\tau v^n)^2\p_x\Pi_\tau\overline{v}^n\Big] \Big\|_{Z_{\tau}^{\frac{1}{2},-\frac{1}{2}}} \\
			&\qquad \quad +  \Big\|\eta(\tfrac{t_n}{T_1})  \Pi_{\tau} \Big[|\Pi_\tau v_\tau(t_n)|^4\Pi_\tau v_\tau(t_n)-|\Pi_\tau v^n|^4\Pi_\tau v^n)\Big] \Big\|_{Z_{\tau}^{\frac{1}{2},-\frac{1}{2}}} \\
			&\qquad \quad +   \Big\|\eta(\tfrac{t_n}{T_1}) \Pi_{\tau}\Big[|\Pi_\tau v_\tau(t_n)|^2\Pi_\tau v_\tau(t_n)-|\Pi_\tau v^n)|^2\Pi_\tau v^n\Big]  \Big\|_{Z_{\tau}^{\frac{1}{2},-\frac{1}{2}}} \\
			&\qquad \quad +   \Big\|\eta(\tfrac{t_n}{T_1})  \Big[\psi(\Pi_\tau v_\tau(t_n))\Pi_\tau v_\tau(t_n)-\psi(\Pi_\tau v^n)\Pi_\tau v^n\Big]  \Big\|_{Z_{\tau}^{\frac{1}{2},-\frac{1}{2}}} .
		\end{aligned}
	\end{equation}
	For the first term on the right-hand side of \eqref{Eqn:Z1/21/2toZ1/21/2}, by noting that 
	$$
	\begin{aligned}
		(\Pi_\tau v^n)^2\p_x\Pi_\tau\overline{v}^n &  =  (\Pi_\tau v_\tau(t_n)-\Pi_\tau e^n)^2 (   \p_x\Pi_\tau\overline{v}_\tau(t_n) - \p_x\Pi_\tau\overline{e}^n) \\
		& =(\Pi_\tau v_\tau(t_n))^2 \p_x \Pi_\tau v_\tau(t_n) - (\Pi_\tau v_\tau(t_n))^2 \p_x\Pi_\tau\overline{e}^n \\
		&\quad - \big[ 2\Pi_\tau v_\tau(t_n) \Pi_\tau e^n -(\Pi_\tau e^n)^2\big](   \p_x\Pi_\tau\overline{v}_\tau(t_n) - \p_x\Pi_\tau\overline{e}^n) ,
	\end{aligned}
	$$
	we obtain from \eqref{eq:gen_tri} that
	$$
	\begin{aligned}
		&\Big\|\eta(\tfrac{t_n}{T_1}) \Pi_\tau \Big[(\Pi_\tau v_\tau(t_n))^2\p_x\Pi_\tau\overline{v}_\tau(t_n)-(\Pi_\tau v^n)^2\p_x\Pi_\tau\overline{v}^n\Big] \Big\|_{Z_{\tau}^{\frac{1}{2},-\frac{1}{2}}} \\
		&\qquad \leq C_T T_1^{\varepsilon}  \Big( \|e^n\|_{Z_\tau^{\frac{1}{2},\frac{1}{2}}} +  \|e^n\|_{Z_\tau^{\frac{1}{2},\frac{1}{2}}}^2 +  \|e^n\|_{Z_\tau^{\frac{1}{2},\frac{1}{2}}}^3 \Big).
	\end{aligned}
	$$
	For the second term on the right-hand side of \eqref{Eqn:Z1/21/2toZ1/21/2}, by noting the relation
	$$
	\begin{aligned}
		|a-b|^4 (a-b) & = (a-b)^3(\overline{a}-\overline{b})^2 = (a^3-3a^2b+3ab^2-b^3)(\overline{a}^2-2\overline{a}\overline{b}+\overline{b}^2)
	\end{aligned}
	$$
	and taking $a= \Pi_\tau{v}_\tau(t_n)$, $b= \Pi_\tau e^n$, it follows from \eqref{eq:XtautoZtao} and \eqref{eq:gen_quint_pol_est}  that
	$$
	\begin{aligned}
		& \Big\|\eta(\tfrac{t_n}{T_1})  \Pi_{\tau} \Big[|\Pi_\tau v_\tau(t_n)|^4\Pi_\tau v_\tau(t_n)-|\Pi_\tau v^n|^4\Pi_\tau v^n)\Big] \Big\|_{Z_{\tau}^{\frac{1}{2},-\frac{1}{2}}} \\
		&\qquad\lesssim \Big\|\eta(\tfrac{t_n}{T_1})  \Pi_{\tau} \Big[|\Pi_\tau v_\tau(t_n)|^4\Pi_\tau v_\tau(t_n)-|\Pi_\tau v^n|^4\Pi_\tau v^n)\Big] \Big\|_{X_{\tau}^{\frac{1}{2},-\frac{3}{8}}} \\
		&\qquad\leq C_T  T_1^\varepsilon \Big( \|e^n\|_{Z_\tau^{\frac{1}{2},\frac{1}{2}}} +  \|e^n\|_{Z_\tau^{\frac{1}{2},\frac{1}{2}}}^2 +  \|e^n\|_{Z_\tau^{\frac{1}{2},\frac{1}{2}}}^3 +  \|e^n\|_{Z_\tau^{\frac{1}{2},\frac{1}{2}}}^4  +  \|e^n\|_{Z_\tau^{\frac{1}{2},\frac{1}{2}}}^5\Big).
	\end{aligned}
	$$
	The third term can be similarly treated. For the last term, using \eqref{eq:XtautoZtao} and \eqref{Eqn:T-inverse}, we obtain
	\begin{align}
		&  \Big\|\eta(\tfrac{t_n}{T_1})  \Big[\psi(\Pi_\tau v_\tau(t_n))\Pi_\tau v_\tau(t_n)-\psi(\Pi_\tau v^n)\Pi_\tau v^n)\Big]  \Big\|_{Z_{\tau}^{\frac{1}{2},-\frac{1}{2}}} \notag \\
		&\qquad \lesssim  T_1^\varepsilon \Big\|\eta(\tfrac{t_n}{T_1})  \Big[\psi(\Pi_\tau v_\tau(t_n))-\psi\big(\Pi_\tau v_\tau(t_n) - \Pi_\tau e^n\big)\Big]\Pi_\tau v_\tau(t_n)  \Big\|_{X_{\tau}^{\frac{1}{2},0}} \notag \\
		&\qquad\quad+ T_1^\varepsilon \Big\|\eta(\tfrac{t_n}{T_1})  \psi\big(\Pi_\tau v_\tau(t_n) - \Pi_\tau e^n\big)  \Pi_\tau e^n \Big\|_{X_{\tau}^{\frac{1}{2},0}} .\notag 
	\end{align}
	Applying \eqref{Eqn:psil4} and noting that $\psi(0)=0$, we further get
	\begin{align}
		&  \Big\|\eta(\tfrac{t_n}{T_1})  \Big[\psi(\Pi_\tau v_\tau(t_n))\Pi_\tau v_\tau(t_n)-\psi(\Pi_\tau v^n)\Pi_\tau v^n)\Big]  \Big\|_{Z_{\tau}^{\frac{1}{2},-\frac{1}{2}}} \notag \\
		&\qquad \lesssim  T_1^\varepsilon 
		\|\psi(\Pi_\tau v_\tau(t_n)) - \psi(\Pi_\tau v_\tau(t_n) - \Pi_\tau e^n)\|_{l^4_\tau} \|\Pi_\tau v_\tau(t_n)\|_{l^4_\tau H^{\frac{1}{2}}} \notag \\
		&\qquad\quad+ T_1^\varepsilon \big\|\psi\big(\Pi_\tau v_\tau(t_n) - \Pi_\tau e^n\big)\big\|_{l_\tau^4} \|\Pi_\tau e^n \|_{l_\tau^4 H^\frac{1}{2}} \notag \\
		&\qquad \lesssim   T_1^\varepsilon \Big( 1+ \|v_\tau(t_n)\|_{Z_\tau^{\frac{1}{2},\frac{1}{2}}} +  \|\Pi_\tau v_\tau(t_n) - \Pi_\tau e^n\|_{Z_\tau^{\frac{1}{2},\frac{1}{2}}}  \Big)^3  \|\Pi_\tau e^n\|_{Z_\tau^{\frac{1}{2},\frac{1}{2}}}  \|\Pi_\tau v_\tau(t_n)\|_{Z_\tau^{\frac{1}{2},\frac{1}{2}}} \notag \\
		&\qquad \quad +   T_1^\varepsilon \Big( 1+ \|\Pi_\tau v_\tau(t_n) - \Pi_\tau e^n\|_{Z_\tau^{\frac{1}{2},\frac{1}{2}}}   \Big)^3   \|\Pi_\tau v_\tau(t_n) - \Pi_\tau e^n\|_{Z_\tau^{\frac{1}{2},\frac{1}{2}}}  \|\Pi_\tau e^n\|_{Z_\tau^{\frac{1}{2},\frac{1}{2}}} \notag \\
		&\qquad\leq  C_T T_1^\varepsilon\Big(  \|e^n\|_{Z_\tau^{\frac{1}{2},\frac{1}{2}}}+\|e^n\|_{Z_\tau^{\frac{1}{2},\frac{1}{2}}}^2+\|e^n\|_{Z_\tau^{\frac{1}{2},\frac{1}{2}}}^3+\|e^n\|_{Z_\tau^{\frac{1}{2},\frac{1}{2}}}^4+\|e^n\|_{Z_\tau^{\frac{1}{2},\frac{1}{2}}}^5\Big).     \notag
	\end{align}
	Combining the above results, we arrive at
	$$
	\big\| \eta (\tfrac{t_n}{T_1} ) \big( F_\tau(v_\tau(t_{n})) -F_\tau(v^{n})  \big)  \big\|_{Z_\tau^{\frac{1}{2},-\frac{1}{2}}} 
	\leq C_T  T_1^\varepsilon \Big( \|e^n\|_{Z_\tau^{\frac{1}{2},\frac{1}{2}}} +  \|e^n\|_{Z_\tau^{\frac{1}{2},\frac{1}{2}}}^2 +  \|e^n\|_{Z_\tau^{\frac{1}{2},\frac{1}{2}}}^3 +  \|e^n\|_{Z_\tau^{\frac{1}{2},\frac{1}{2}}}^4 +  \|e^n\|_{Z_\tau^{\frac{1}{2},\frac{1}{2}}}^5 \Big),
	$$
	which together with \eqref{Eqn:error-conclusion-1} implies
	$$
	\|e^n\|_{Z_\tau^{\frac{1}{2},\frac{1}{2}}} \leq C_T  T_1^\varepsilon \Big( \|e^n\|_{Z_\tau^{\frac{1}{2},\frac{1}{2}}} +  \|e^n\|_{Z_\tau^{\frac{1}{2},\frac{1}{2}}}^2 +  \|e^n\|_{Z_\tau^{\frac{1}{2},\frac{1}{2}}}^3 +  \|e^n\|_{Z_\tau^{\frac{1}{2},\frac{1}{2}}}^4  +  \|e^n\|_{Z_\tau^{\frac{1}{2},\frac{1}{2}}}^5\Big)+ C_T \tau^{\frac{s}{2}-\frac{1}{4}}.
	$$
	By selecting $T_1$ sufficiently small and using \eqref{eq:linfty_embdis}, we conclude  that
	$$ 
	\|e^n\|_{l^\infty H^{1\over2}} \lesssim \|e^n\|_{Z_\tau^{\frac{1}{2},\frac{1}{2}}} \leq C_T \tau^{\frac{s}{2}-\frac{1}{4}},
	$$
	which together with Proposition~\ref{diffpj} actually gives  
	$$
	\|v(t_n)-v^n\|_{H^{1\over2}} \leq C_T \tau^{\frac{s}{2}-\frac{1}{4}}
	$$
	for $0\leq n \leq N_1=\lfloor {T_1\over  \tau}\rfloor$. Observing that the choice of $T_1$ depends only on $T$ and  $v_0$, we can  repeat the argument for $\lfloor{T_1\over \tau}\rfloor\leq n \leq 2\lfloor{ T_1\over \tau}\rfloor$ and so on to obtain the estimate for $0\leq m \leq {T\over \tau}$. 
	
	Using the locally bi-Lipschitz continuity of $\mathcal{G}_0$ from Lemma~\ref{lem:gauge_trafo_est} and the fact that $\mu(u)=\mu(v)$, and observing that for fixed $t$ a translation in $x$ is an isometric isomorphism on $H^s(\T)$, we have
	$$
	\begin{aligned}
		&\|u(t_n)-  \mathcal{G}^{-1}(v^n)\|_{H^{\frac{1}{2}}} = \| \mathcal{G}_0^{-1}(\tau_{\mu(v)} v(t_n)) -\mathcal{G}_0^{-1}(\tau_{\mu(v)}  v^n)\|_{H^{\frac{1}{2}}} \\
		& \qquad \lesssim \| \tau_{\mu(v)}v(t_n)  - \tau_{\mu(v)}v^n \|_{H^{\frac{1}{2}}} = \|v(t_n)-v^n\|_{H^{1\over2}} \leq C_T \tau^{\frac{s}{2}-\frac{1}{4}}.
	\end{aligned}
	$$

	%%%%%%%%%%%%%%%%%%%%%%%%%%%%%%%%%%%%%
	\section{Numerical experiments}\label{sectionnumerexp}
	%%%%%%%%%%%%%%%%%%%%%%%%%%%%%%%%%%%%%
	In this section, we perform some numerical tests to validate our main result (Theorem~\ref{Thm:main}). We take the initial data
	$$
	u_0(x)=\sum_{k\in \Z} \langle k \rangle^{-(s+\frac{1}{2}+\varepsilon)} \widehat{g}_k \mathrm{e}^{ikx}\in H^{s},
	$$
	where  $\varepsilon$ is a positive constant and $\widehat{g}_k$ are random variables uniformly distributed in the square $[-1,1]+i[-1,1]$. In our experiments, we can take $\varepsilon=0$. For spatial discretization, we use a standard Fourier pseudospectral method and choose the largest Fourier mode as $K = 2^{17}$. We normalize the $H^{1/2}$ norm of the initial data to 0.1. As a reference solution, we use the integrator proposed in \cite{ji2026low} with a small time step size, $\tau= 2^{-24}$, computed with a standard Fourier pseudospectral method and largest Fourier mode $K = 2^{17}$. We set $T = 1$ as the final time. The convergence of the filtered exponential integrator \eqref{filteredsch} for rough initial data $u_0\in H^{s}$, $s=0.7,\,1,\,1.5,\,2.5$ is presented in Figure~\ref{Fig:Hs}. The figure confirms our theoretical analysis. 
	
	\begin{figure}[h]
		\centering
		\includegraphics[width=0.44\textwidth]{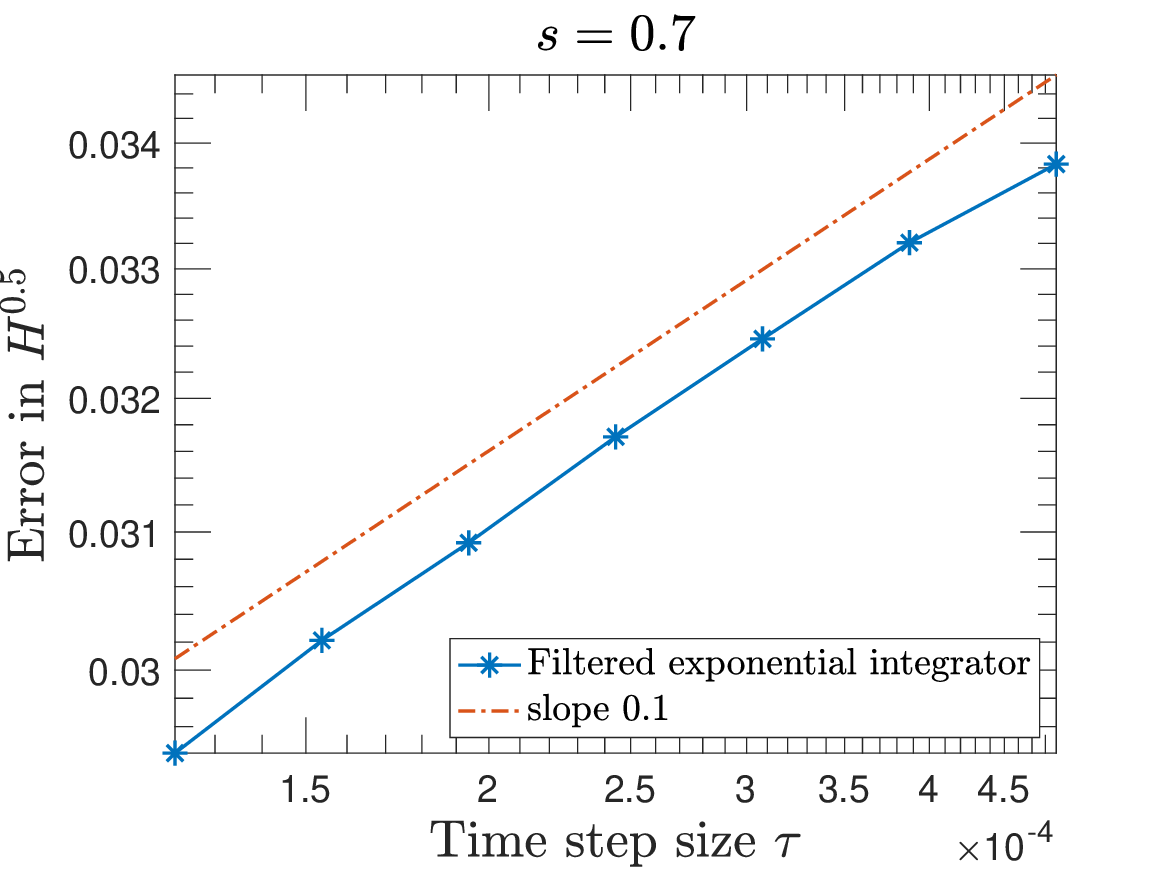}
		\includegraphics[width=0.44\textwidth]{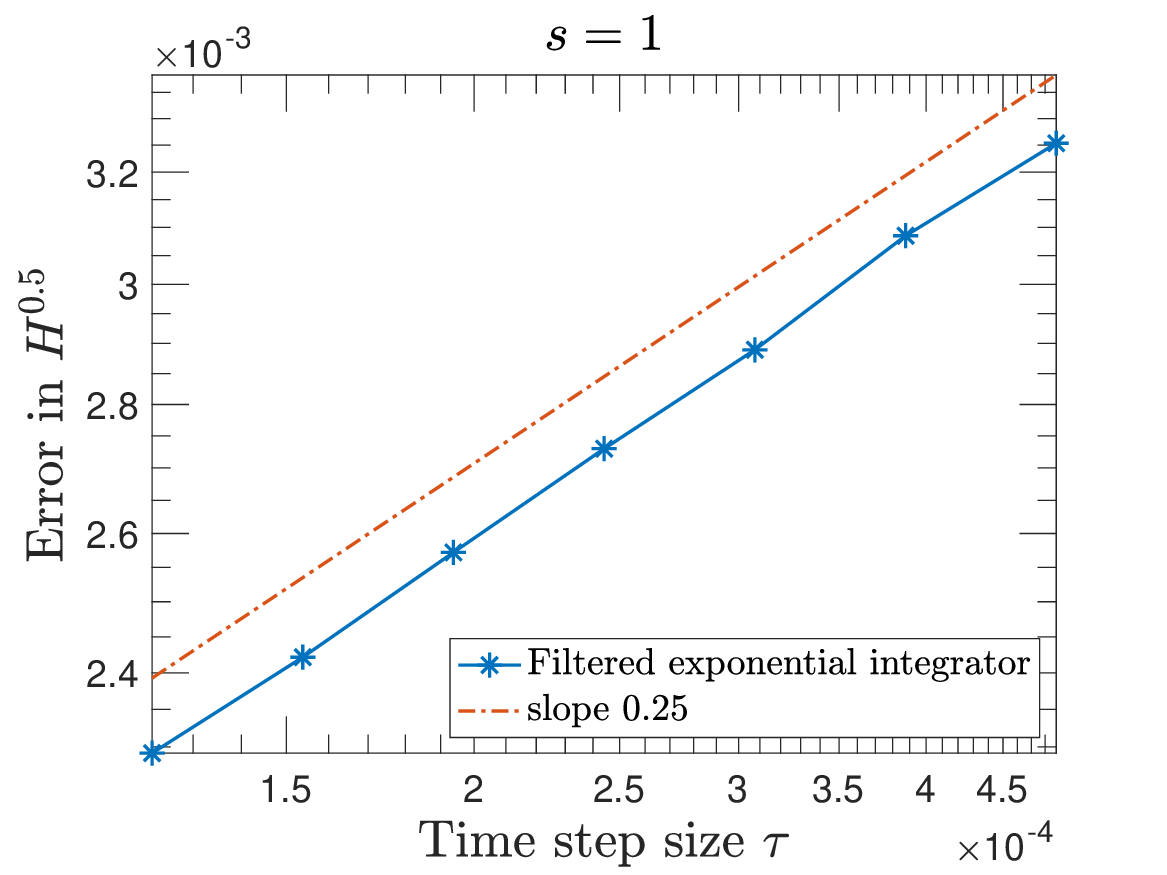} \\[2mm]
		\includegraphics[width=0.44\textwidth]{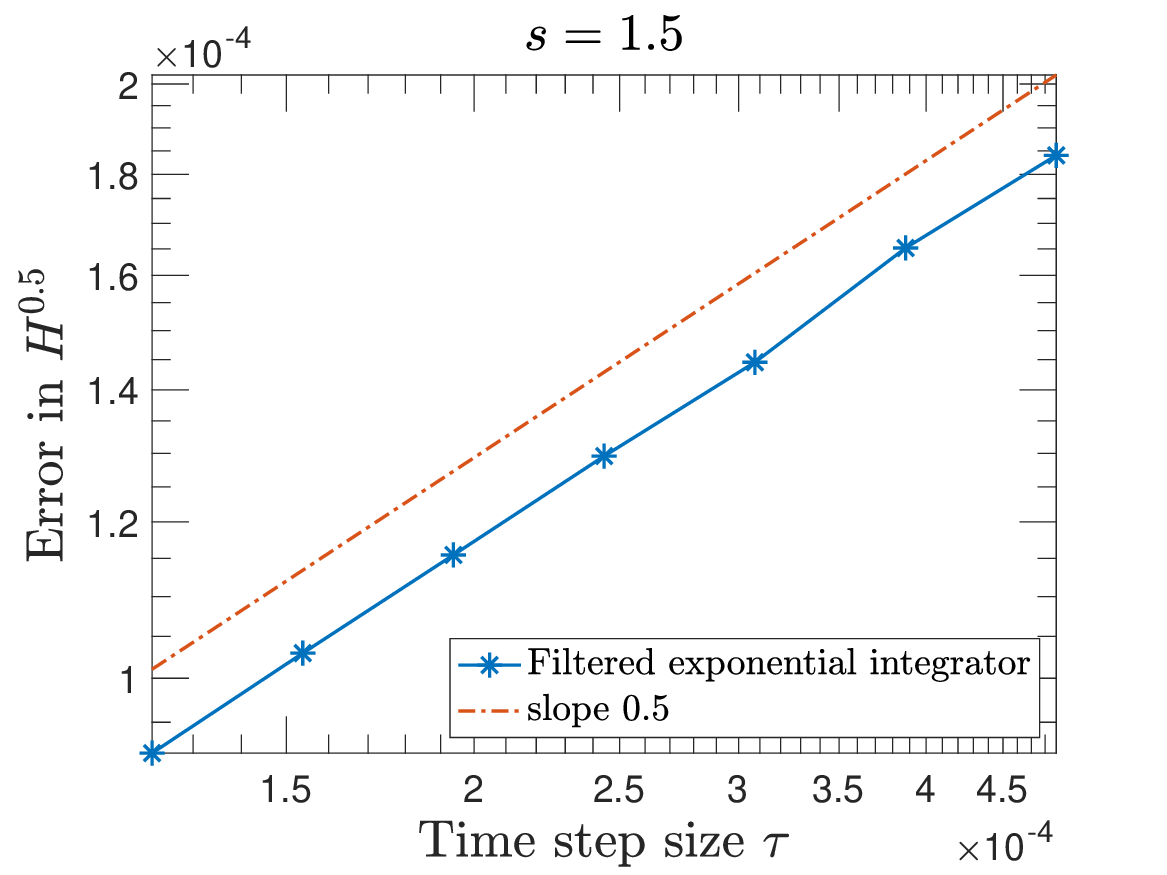}
		\includegraphics[width=0.44\textwidth]{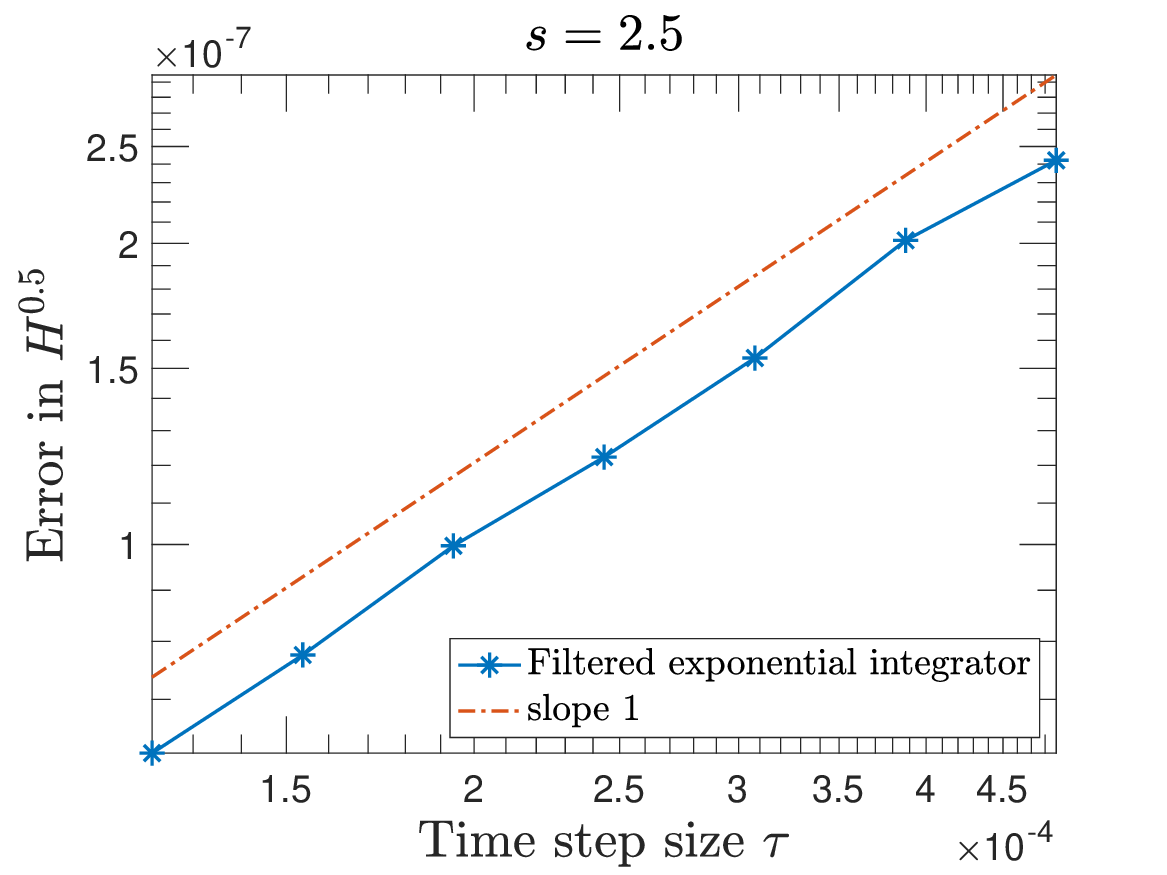}
		\caption{$H^{0.5}$ error of the filtered exponential integrator \eqref{filteredsch} for rough initial data $u_0 \in H^s$ for various values of $s$.}\label{Fig:Hs}
	\end{figure}

	Given the application of the gauge transformation \eqref{Eqn:gauge}, conservation properties, including mass conservation and energy conservation, are important for the dNLS equation. Accordingly, we conduct numerical experiments to evaluate the mass \eqref{Eqn:mass} and energy \eqref{Eqn:energy} conservation of the filtered exponential integrator \eqref{filteredsch} for rough initial data. We set the initial data in $H^1$, and take $K=2^{10}$, $\tau=2^{-12}$, $\|u_0\|_{H^{1\over 2}}=0.1$ and $T=1000$. Figure~\ref{Fig:conserve} illustrates that the filtered method exhibits robust long-term behavior at low regularity.

	\begin{figure}[h]
		\centering
		\includegraphics[width=0.44\textwidth]{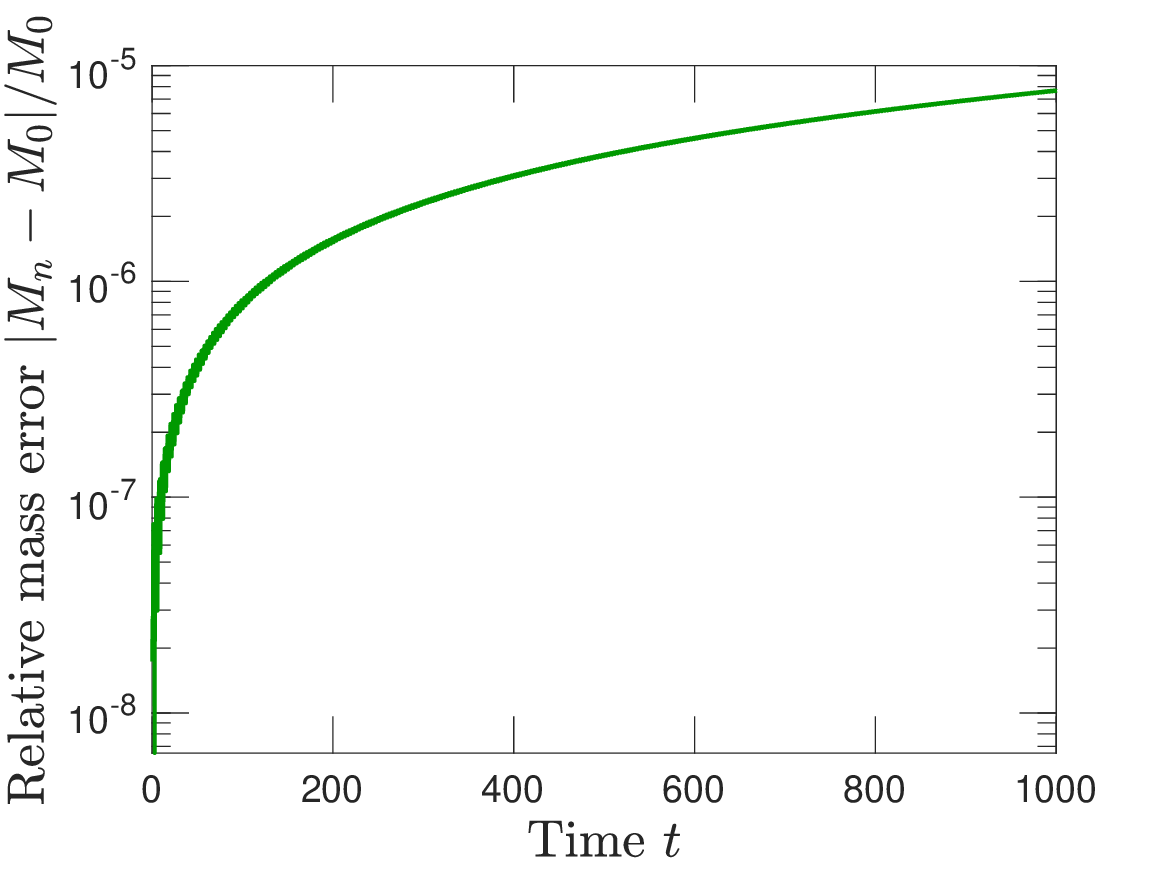}
		\includegraphics[width=0.44\textwidth]{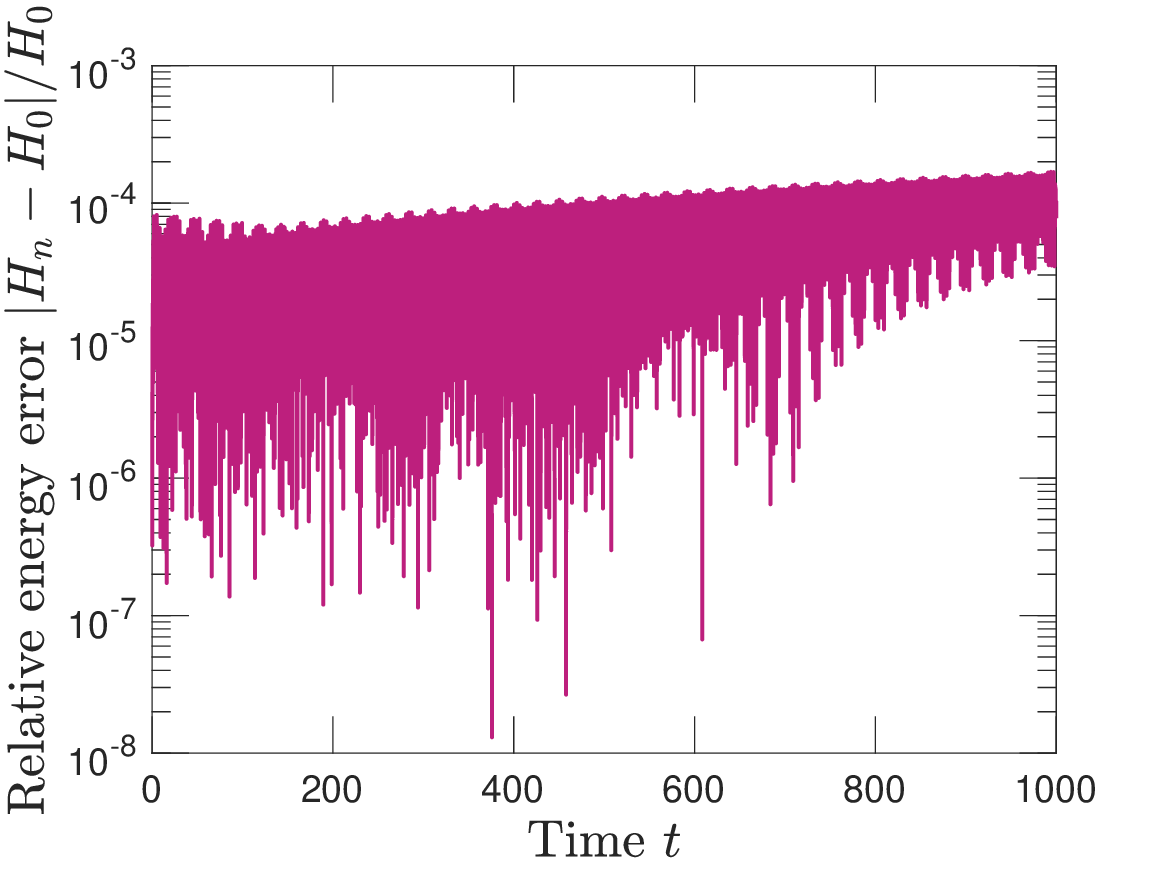}
		\caption{Conservation behavior of the filtered exponential integrator \eqref{filteredsch} for rough initial data $u_0 \in H^1$. Left: relative mass error; Right: relative energy error.}\label{Fig:conserve}
	\end{figure}

	\appendix
	%%%%%%%%%%%%%%%%%%%%%%%%%%%
	\section{Multilinear estimates}\label{multilinear}
	%%%%%%%%%%%%%%%%%%%%%%%%%%%
	We first present some useful embedding theorems.
	\begin{lemma}\label{sobolevdiscrete}
		For a sequence $\{v^n\}_{n\in \Z}$, we have
		\begin{align}
			\|v^n\|_{l^p_\tau H^s}&\lesssim \|v^n\|_{X_\tau^{s,b}},\quad 2 \leq p<\infty, ~ b>
			\tfrac{1}{2}-\tfrac{1}{p}, ~ s \in \R, \label{eq:sobdis}\\
			\|v^n\|_{l^p_\tau L^q}&\lesssim \|v^n\|_{X_\tau^{s,b}},\quad 2 \leq p,q<\infty, ~  b>
			\tfrac{1}{2}-\tfrac{1}{p},~ s>   \tfrac{1}{2}-\tfrac{1}{q},  \label{eq:sob2dis}\\
			\|v^n\|_{X_\tau^{s,b}} & \lesssim
			\|v^n\|_{l^p_\tau H^s},\quad 1 < p \leq 2 ,~ b> \tfrac{1}{2}-\tfrac{1}{p},~ s\in \R,\label{eq:dual_sobdis}\\
			\|v^n\|_{Y_\tau^{s,b^\prime}}  & \lesssim  \|v^n\|_{X_\tau^{s,b}},\quad  b>b^\prime+\tfrac{1}{2}, \label{eq:XtautoYtao} \\
			\|v^n\|_{l_\tau^{\infty}H^s} &\lesssim \|v^n\|_{Z_\tau^{s, \frac{1}{2}}},\quad  s\in \R. \label{eq:linfty_embdis}
		\end{align}
	\end{lemma}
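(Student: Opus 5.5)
The plan is to transfer every estimate to the frequency side, where the $X_\tau^{s,b}$ norm becomes a weighted $L^2$ norm in the twisted variable $w^n={\rm e}^{-in\tau\partial_x^2}v^n$. Concretely, $\|v^n\|_{X^{s,b}_\tau}\sim\|w^n\|_{H^b_\tau H^s}$ by \eqref{Xtausb}, with time frequency $\sigma$ restricted to $[-\pi/\tau,\pi/\tau]$ and weight $\langle d_\tau(\sigma+k^2)\rangle^b$. Since $|d_\tau(\sigma)|\sim|\sigma|$ on the fundamental period, this weight behaves like a continuous Sobolev weight of order $b$ in time.

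For \eqref{eq:sobdis}, I will fix $k$ and apply a one-dimensional discrete Sobolev embedding $\|f^n\|_{l^p_\tau}\lesssim\|\langle d_\tau(\cdot)\rangle^b\widetilde f\|_{L^2(-\pi/\tau,\pi/\tau)}$, valid for $b>\frac12-\frac1p$. To get this:
\begin{itemize}
\item For $p=\infty$, use Cauchy--Schwarz on the inverse transform. For $p=2$, use Parseval. Interpolate between the two; the strict inequality on $b$ supplies the convergent tail integral.
\item The spatial multiplier ${\rm e}^{in\tau\partial_x^2}$ is unitary on $H^s$, so the embedding passes to $v^n$.
\item To handle the mixed norm $l^p_\tau H^s$ with $p\ge2$, use Minkowski's inequality to swap $l^2_k$ and $l^p_\tau$.
\end{itemize}
Estimate \eqref{eq:dual_sobdis} then follows by duality: $l^{p'}$ with $p'\ge2$ is paired against $X^{-s,-b}_\tau$, and this uses the same condition $b>\frac12-\frac1p$ written for $p'$.

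For \eqref{eq:sob2dis}, I will combine \eqref{eq:sobdis} in time with the spatial Sobolev embedding $H^{s}(\mathbb T)\subset L^q$ for $s>\frac12-\frac1q$. Here $\|v^n\|_{L^q}\lesssim\|v^n\|_{H^s}$ pointwise in $n$, and then one takes $l^p_\tau$. Estimate \eqref{eq:XtautoYtao} is Cauchy--Schwarz in $\sigma$:
\[
\int\langle d_\tau(\sigma+k^2)\rangle^{b'}|\widetilde v|\,\dd\sigma\le\Big(\int\langle d_\tau(\sigma+k^2)\rangle^{2(b'-b)}\dd\sigma\Big)^{1/2}\Big(\int\langle d_\tau(\sigma+k^2)\rangle^{2b}|\widetilde v|^2\dd\sigma\Big)^{1/2}.
\]
The first factor is bounded uniformly in $k$ and $\tau$, because $2(b-b')>1$ and, by periodicity, $\sigma\mapsto\langle d_\tau(\sigma+k^2)\rangle$ is comparable to $\langle\sigma'\rangle$ over one period.

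The embedding \eqref{eq:linfty_embdis} is where I expect the main obstacle, since it is the endpoint and the $X^{s,1/2}_\tau$ part alone fails. I would use only the $Y_\tau^{s,0}$ component of $Z^{s,\frac12}_\tau$. Writing $\widehat{w^n}(k)=\frac{1}{2\pi}\int_{-\pi/\tau}^{\pi/\tau}{\rm e}^{in\tau\sigma}\widetilde{w}(\sigma,k)\,\dd\sigma$, we get $|\widehat{v^n}(k)|\le\frac1{2\pi}\int|\widetilde{v^n}(\sigma,k)|\dd\sigma$ uniformly in $n$, because the twist only shifts $\sigma$ by $k^2$ modulo $2\pi/\tau$. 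Summing $\langle k\rangle^{2s}$ times the square over $k$ gives $\sup_n\|v^n\|_{H^s}\lesssim\|v^n\|_{Y^{s,0}_\tau}\le\|v^n\|_{Z^{s,\frac12}_\tau}$. The care needed is in checking the normalisation of the discrete Fourier transform and that the shift by $k^2$ is harmless modulo the period.
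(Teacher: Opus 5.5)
Your arguments for \eqref{eq:sobdis}, \eqref{eq:sob2dis}, \eqref{eq:XtautoYtao} and \eqref{eq:linfty_embdis} are the paper's: Cauchy--Schwarz on the inverse transform for $l^\infty$, Parseval for $l^2$ and interpolation; spatial Sobolev embedding; Cauchy--Schwarz in $\sigma$; the $L^1_\sigma$ bound on the inverse transform. You do the time embedding for each fixed $k$ and then exchange $l^2_k$ and $l^p_\tau$ by Minkowski, which is legitimate since $p\ge 2$. This is only a cosmetic variant of the paper's $L^2_x$-valued version, and it spares you vector-valued interpolation.

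The genuine problem is \eqref{eq:dual_sobdis}. Carrying out the duality you describe gives
\[
\|v^n\|_{X^{s,b}_\tau}\sim\sup_{\|G^n\|_{X^{-s,-b}_\tau}\le 1}\Big|\tau\sum_{n}\langle v^n,G^n\rangle\Big|\le\|v^n\|_{l^p_\tau H^s}\sup_{\|G^n\|_{X^{-s,-b}_\tau}\le 1}\|G^n\|_{l^{p'}_\tau H^{-s}} .
\]
\eqref{eq:sobdis} controls the last supremum only if $-b>\frac12-\frac1{p'}=\frac1p-\frac12$, i.e.\ $b<\frac12-\frac1p$. That is the reverse of the printed hypothesis, so your phrase ``the same condition written for $p'$'' hides a sign error.

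No other argument can rescue the printed range. Take $v^n=\delta_{n0}f$. Then $\|v^n\|_{l^p_\tau H^s}=\tau^{1/p}\|f\|_{H^s}$. On the other hand $\widetilde{v^n}(\sigma,k)=\tau\widehat f(k)$ and $|d_\tau(\sigma)|\sim|\sigma|$ on $[-\pi/\tau,\pi/\tau]$, so $\|v^n\|_{X^{s,b}_\tau}\sim\tau^{\frac12-b}\|f\|_{H^s}$ for $b>-\frac12$. The ratio $\tau^{\frac12-\frac1p-b}$ blows up as $\tau\to0$ exactly when $b>\frac12-\frac1p$.

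Hence \eqref{eq:dual_sobdis} as printed is false uniformly in $\tau$. What your argument proves, and what the paper's one-line ``by duality'' also proves, is the estimate for $b<\frac12-\frac1p$; that is what you should state.

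This matters downstream. The bound for $J(M_1)$ invokes \eqref{eq:dual_sobdis} with $(p,b)=(\tfrac{15}{14},-\tfrac38)$, which lies in the printed range but not in the correct one. That step needs a different H\"older split, e.g.\ through $l^{p}_\tau L^2$ with $p$ slightly above $\tfrac87$, putting the three factors in $l^2_\tau L^8$, $l^{q}_\tau L^8$ with $q$ slightly above $8$, and $l^4_\tau L^4$.
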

	\begin{proof}
		We set $f^n =\langle \partial_x \rangle^s \e^{-i n \tau \partial_x^2}  v^n$. According to \eqref{Xtausb}, it holds $\|v^n\|_{X_\tau^{s,b}} \sim \|f^n\|_{H_\tau^b L^2}$.
		For \eqref{eq:sobdis}, since $\|f^n\|_{l_\tau^p L^2} = \|v^n\|_{l_\tau^p H^s}$, it suffices to show $\|f^n\|_{l_\tau^p L^2} \lesssim \|f^n\|_{H_\tau^b L^2}$. Observing that $\widehat{f^m}(k)=\frac{1}{2\pi}\int_{-\frac{\pi}{\tau}}^{\frac{\pi}{\tau}} \widetilde{f^n}(\sigma,k)\e^{ i m\tau \sigma}\dd \sigma$ and using the Cauchy--Schwarz inequality, we obtain that, for each $m\in \Z$,
		$$
		\begin{aligned}
			\|f^m\|_{L^2} &  \lesssim  \|\widehat{f^m}(k)\|_{l^2(k)}   \lesssim \int_{-\frac{\pi}{\tau}}^{\frac{\pi}{\tau}} \|   \widetilde{f^n}(\sigma,k)\|_{l^2(k)} \dd \sigma
			\\ 
			&
			\lesssim \|\langle d_\tau (\sigma) \rangle^{-\frac{1}{2}-\varepsilon}\|_{L^2({\sigma})} \| \langle d_\tau (\sigma) \rangle^{\frac{1}{2}+\varepsilon} \widetilde{f^n}(\sigma,k)\|_{L^2l^2}\lesssim \|f^n\|_{H_\tau^{\frac{1}{2}+\varepsilon} L^2},
		\end{aligned}
		$$
		which yields $\|f^n\|_{l^\infty L^2}\lesssim \|f^n\|_{H_\tau^{\frac{1}{2}+\varepsilon} L^2}$ with arbitrarily small $\varepsilon>0$. Moreover, it holds $\|f^n\|_{l_\tau^2 L^2}\lesssim \|f^n\|_{H_\tau^0 L^2}$. Interpolating between the above two embeddings leads to the desired result. Subsequently, \eqref{eq:sob2dis} follows from the combination of \eqref{eq:sobdis} and $H^{s}\subset L^q$ ($s>   \tfrac{1}{2}-\tfrac{1}{q}$). The relation \eqref{eq:dual_sobdis} follows by duality from \eqref{eq:sobdis}. 
		For the proof of \eqref{eq:XtautoYtao}, using the Cauchy--Schwarz inequality, we obtain
		$$
		\begin{aligned}
			\|v^n\|_{Y_\tau^{s,b^\prime}}^2 & = 
			\sum_{k\in \Z}\left(\int_{-\frac{\pi}{\tau}}^{\frac{\pi}{\tau}} \lb k\rb^s\lb d_\tau(\sigma+k^2)\rb^{b^\prime} |\widetilde{v^n}(\sigma, k)| \dd \sigma \right)^2 \\
			& \leq \sum_{k\in \Z}\int_{-\frac{\pi}{\tau}}^{\frac{\pi}{\tau}} \lb d_\tau(\sigma+k^2)\rb^{2b^\prime-2b} \dd \sigma \int_{-\frac{\pi}{\tau}}^{\frac{\pi}{\tau}}  \lb k\rb^{2s}\lb d_\tau(\sigma+k^2)\rb^{2b} |\widetilde{v^n}(\sigma, k)|^2 \dd \sigma.
		\end{aligned} 
		$$
		Since $d_{\tau}$ is $2\pi/\tau$ periodic and satisfies $|d_{\tau}(\sigma)| \sim | \sigma |$ for $|\tau \sigma | \leq \pi$, we have, for each $k\in\Z$,
		$$
		\int_{-\frac{\pi}{\tau}}^{\frac{\pi}{\tau}} \lb d_\tau(\sigma+k^2)\rb^{2b^\prime-2b} \dd \sigma=\int_{-\frac{\pi}{\tau}}^{\frac{\pi}{\tau}} \lb d_\tau(\sigma)\rb^{2b^\prime-2b} \dd \sigma\lesssim \int_{-\frac{\pi}{\tau}}^{\frac{\pi}{\tau}}  \lb  \sigma \rb^{2b^\prime-2b} \dd \sigma\lesssim 1,
		$$
		which gives the desired result. Finally, we prove \eqref{eq:linfty_embdis}. For each $m\in \Z$, we have
		$$
		|\widehat{f^m}(k)|= \bigg|\frac{1}{2\pi}\int_{-\frac{\pi}{\tau}}^{\frac{\pi}{\tau}} \widetilde{f^n}(\sigma,k)\e^{i m\tau \sigma}\dd \sigma\bigg|\leq \frac{1}{2\pi} \|\widetilde{f^n}(\sigma,k)\|_{L^1(\sigma)}.
		$$
		Taking the $l^2$ norm in $k$ yields
		$$
		\|v^m\|_{H^s}= \|f^m\|_{L^2} \lesssim \|\widetilde{f^n}(\sigma,k)\|_{L^1l^2} \leq \|v^n\|_{Z_\tau^{s, \frac{1}{2}}},
		$$
		which completes the proof.
	\end{proof}

	For the multilinear estimates, we follow the proof of the continuous case in \cite{Herr2006}. We begin with an elementary bound for the multilinear multiplier.
	
	\begin{lemma}\label{Lem:multiplier1}
		Let $(\sigma,k)=\sum_{i=1}^3 (\sigma_i,k_i)$ with $\sigma_i\in \R$ and $k_i \in \Z$. Define
		$$
		\begin{aligned}
			&M(\sigma_1,\sigma_2,\sigma_3,k_1,k_2,k_3) \\
			&\qquad=\frac{\langle k \rangle^{\frac{1}{2}} i k_3}{\langle k_1 \rangle^{\frac{1}{2}} \langle k_2 \rangle^{\frac{1}{2}} \langle k_3 \rangle^{\frac{1}{2}}\langle d_\tau(\sigma+k^2) \rangle^{\frac{1}{2}} \langle d_\tau(\sigma_1+k_1^2) \rangle^{\frac{1}{2}} \langle d_\tau(\sigma_2+k_2^2) \rangle^{\frac{1}{2}} \langle d_\tau(\sigma_3-k_3^2) \rangle^{\frac{1}{2}}},
		\end{aligned}
		$$
		and
		\begin{align}
			M_0(\sigma_1,\sigma_2,\sigma_3,k_1,k_2,k_3)
			&=\frac{\chi_{A_0}}{\langle k_1 \rangle^{\frac{1}{2}} \langle k_2 \rangle^{\frac{1}{2}} \langle d_\tau(\sigma_1+k_1^2) \rangle^{\frac{1}{2}} \langle d_\tau(\sigma_2+k_2^2) \rangle^{\frac{1}{2}} \langle d_\tau(\sigma_3-k_3^2) \rangle^{\frac{1}{2}}}, \notag \\
			M_1(\sigma_1,\sigma_2,\sigma_3,k_1,k_2,k_3)
			&=\frac{\chi_{A_1}}{\langle k_1 \rangle^{\frac{1}{2}} \langle k_2 \rangle^{\frac{1}{2}}  \langle d_\tau(\sigma+k^2) \rangle^{\frac{1}{2}}   \langle d_\tau(\sigma_2+k_2^2) \rangle^{\frac{1}{2}} \langle d_\tau(\sigma_3-k_3^2) \rangle^{\frac{1}{2}}},\notag\\
			M_2(\sigma_1,\sigma_2,\sigma_3,k_1,k_2,k_3)
			&=\frac{\chi_{A_2}}{\langle k_1 \rangle^{\frac{1}{2}} \langle k_2 \rangle^{\frac{1}{2}}  \langle d_\tau(\sigma+k^2) \rangle^{\frac{1}{2}}   \langle d_\tau(\sigma_1+k_1^2) \rangle^{\frac{1}{2}} \langle d_\tau(\sigma_3-k_3^2) \rangle^{\frac{1}{2}}},\notag\\
			M_3(\sigma_1,\sigma_2,\sigma_3,k_1,k_2,k_3)
			&=\frac{\chi_{A_3}}{\langle k_1 \rangle^{\frac{1}{2}} \langle k_2 \rangle^{\frac{1}{2}}  \langle d_\tau(\sigma+k^2) \rangle^{\frac{1}{2}}   \langle d_\tau(\sigma_1+k_1^2) \rangle^{\frac{1}{2}}\langle d_\tau(\sigma_2+k_2^2) \rangle^{\frac{1}{2}} },\notag\\
			N(\sigma_1,\sigma_2,\sigma_3,k_1,k_2,k_3)
			&=\frac{1}{\langle d_\tau(\sigma+k^2) \rangle^{\frac{1}{2}} \langle d_\tau(\sigma_1+k_1^2) \rangle^{\frac{1}{2}} \langle d_\tau(\sigma_2+k_2^2) \rangle^{\frac{1}{2}} \langle d_\tau(\sigma_3-k_3^2) \rangle^{\frac{1}{2}}},\notag
		\end{align}
		where $\chi_{A_j}$ for $j=0,1,2,3$ is the characteristic function of the subregion $A_j\subset \R^3\times \Z^3$, where 
		$$
		\begin{aligned}
			A_0:~\langle d_\tau(\sigma+k^2)\rangle &  \geq \langle d_\tau(\sigma_1+k_1^2)\rangle,\,\langle d_\tau(\sigma_2+k_2^2)\rangle,\,\langle d_\tau(\sigma_3-k_3^2)\rangle ,\\
			A_1: \langle d_\tau(\sigma_1+k_1^2)\rangle &  \geq \langle d_\tau(\sigma+k^2)\rangle,\,\langle d_\tau(\sigma_2+k_2^2)\rangle,\,\langle d_\tau(\sigma_3-k_3^2)\rangle , \\
			A_2: \langle d_\tau(\sigma_2+k_2^2)\rangle  & \geq \langle d_\tau(\sigma+k^2)\rangle,\,\langle d_\tau(\sigma_1+k_1^2)\rangle,\,\langle d_\tau(\sigma_3-k_3^2)\rangle ,\\
			A_3: \langle d_\tau(\sigma_3-k_3^2)\rangle &  \geq \langle d_\tau(\sigma+k^2)\rangle,\,\langle d_\tau(\sigma_1+k_1^2)\rangle,\,\langle d_\tau(\sigma_2+k_2^2)\rangle , \\
		\end{aligned}
		$$
		If $|k|,|k_j|\leq \tau^{-\frac{1}{2}}$, then the estimate
		$$
		|M| \lesssim M_0+M_1+M_2+M_3+N
		$$
		holds true.
	\end{lemma}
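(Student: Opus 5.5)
The plan is a pointwise case analysis on the relative size of the four weights. Abbreviate $L_0=\langle d_\tau(\sigma+k^2)\rangle$, $L_1=\langle d_\tau(\sigma_1+k_1^2)\rangle$, $L_2=\langle d_\tau(\sigma_2+k_2^2)\rangle$, $L_3=\langle d_\tau(\sigma_3-k_3^2)\rangle$. Then
\[
|M| = \frac{\langle k\rangle^{1/2}|k_3|}{\langle k_1\rangle^{1/2}\langle k_2\rangle^{1/2}\langle k_3\rangle^{1/2}}\cdot\frac{1}{(L_0L_1L_2L_3)^{1/2}} .
\]
The regions $A_0,\dots,A_3$ cover $\R^3\times\Z^3$, so it suffices to show on each $A_j$ that $|M|\lesssim M_j+N$.

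On $A_j$ we have $M_j/N=\langle k_1\rangle^{-1/2}\langle k_2\rangle^{-1/2}\max_i L_i^{1/2}$. Hence it is enough to prove the pointwise bound
\[
\langle k\rangle^{1/2}|k_3|^{1/2}\lesssim \max_i L_i^{1/2} + \langle k_1\rangle^{1/2}\langle k_2\rangle^{1/2},
\]
using $|k_3|/\langle k_3\rangle^{1/2}\le |k_3|^{1/2}$. The ingredient is the continuous resonance identity. With $k=k_1+k_2+k_3$,
\[
(\sigma+k^2)-(\sigma_1+k_1^2)-(\sigma_2+k_2^2)-(\sigma_3-k_3^2)=k^2-k_1^2-k_2^2+k_3^2 = 2(k_1+k_3)(k_2+k_3).
\]
This is the familiar factorization for $v^2\partial_x\bar v$, where the derivative falls on the conjugate.

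The main obstacle is transferring this identity to $d_\tau$, which is nonlinear and $2\pi/\tau$-periodic. The CFL-type hypothesis $|k|,|k_j|\le\tau^{-1/2}$ handles this. It gives $\tau|2(k_1+k_3)(k_2+k_3)|\le 8$, a bounded quantity. Next, $d_\tau(a)$ is comparable to $a$ modulo $2\pi/\tau$, in the sense that $|d_\tau(a)|\sim \operatorname{dist}(a,\tfrac{2\pi}{\tau}\Z)$ up to the factor $\tau^{-1}$ scaling. The reduced quantity $\operatorname{dist}(\cdot,\tfrac{2\pi}\tau\Z)$ is subadditive, so
\[
\operatorname{dist}\Big(2(k_1+k_3)(k_2+k_3),\tfrac{2\pi}\tau\Z\Big)\le \sum_{i}\operatorname{dist}(\cdot_i,\tfrac{2\pi}\tau\Z)\lesssim \sum_i L_i .
\]
To turn this into $|(k_1+k_3)(k_2+k_3)|\lesssim\max_i L_i$, one must exclude the resonance being near a nonzero multiple of $2\pi/\tau$. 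I would do this either by noting that the product is at most $8/\tau$, and treating the finitely many wrap-around multiples $|m|\le 2$ via the crude bound $\max_i L_i\gtrsim$ const, or, more cleanly, by replacing $\chi$ with a slightly smaller cutoff constant. I expect this bookkeeping to be the delicate point; the fallback is to estimate these exceptional configurations by $N$ together with the bound $|k|,|k_j|\lesssim\tau^{-1/2}$ directly.

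Granting $|(k_1+k_3)(k_2+k_3)|\lesssim\max_iL_i$, the rest follows Herr's case split. If $|k_3|\lesssim \max(\langle k_1\rangle,\langle k_2\rangle)$ and $\langle k\rangle\lesssim\langle k_1\rangle\langle k_2\rangle/\langle k_3\rangle$, the frequency part is bounded directly and we land in $N$. Otherwise $k_3$ dominates, and then $\langle k_1+k_3\rangle\langle k_2+k_3\rangle\gtrsim \langle k\rangle|k_3|$ up to factors $\langle k_1\rangle,\langle k_2\rangle$. Here $k_1+k_3=k-k_2$ and $k_2+k_3=k-k_1$ are comparable to $\langle k\rangle$ or $|k_3|$ whenever $k_1,k_2$ are small relative to them. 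This gives $\langle k\rangle|k_3|\lesssim \max_i L_i\,\langle k_1\rangle\langle k_2\rangle$, which is the required bound. The subcases where both $k_1+k_3$ and $k_2+k_3$ are small force $|k_3|\lesssim\langle k_1\rangle+\langle k_2\rangle$ and $\langle k\rangle\lesssim\langle k_1\rangle+\langle k_2\rangle$, and these are absorbed into the $N$ term.
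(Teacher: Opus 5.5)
Your framework (the resonance identity, $|d_\tau(a)|\sim\operatorname{dist}(a,\tfrac{2\pi}{\tau}\Z)$, subadditivity, then Herr's frequency split) is sound. However, the step you flag as delicate is genuinely left open, and none of your remedies closes it.

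The bound $\tau|2(k_1+k_3)(k_2+k_3)|\le 8$ exceeds $2\pi$. So it cannot rule out $2(k_1+k_3)(k_2+k_3)$ lying near $\pm 2\pi/\tau$, that is, configurations with all $L_i=O(1)$ but $|(k_1+k_3)(k_2+k_3)|\sim\tau^{-1}$. Your three remedies fail there:
\begin{itemize}
\item $\max_i L_i\gtrsim 1$ is useless, since you would need $\max_i L_i\gtrsim\tau^{-1}$.
\item Shrinking the cutoff proves a different statement about a different scheme.
\item $N$ alone does not control $|M|$ without further information, since $|M|/N$ can be as large as $\tau^{-1/2}$.
\end{itemize}

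The missing observation is that your crude bound discards the constraint $|k_3|\le\tau^{-1/2}$. Set $a=k_1+k_3$ and $b=k_2+k_3$. Then $a+b=k+k_3$ and $a-b=k_1-k_2$, so $4ab=(a+b)^2-(a-b)^2\in[-4\tau^{-1},4\tau^{-1}]$, i.e.\ $|ab|\le\tau^{-1}$. Hence $\tau|2ab|\le 2<\pi$, so $\operatorname{dist}(2ab,\tfrac{2\pi}{\tau}\Z)=2|ab|$. Equivalently, $|d_\tau(2ab)|=\tfrac{2}{\tau}|\sin(\tau ab)|\ge 2\sin(1)\,|ab|$. Your subadditivity step then yields $|ab|\lesssim\max_i L_i$, and there are no exceptional configurations at all.

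The paper takes a different route. It keeps the crude bound $|(k-k_1)(k-k_2)|\le 4\tau^{-1}$ and halves the identity. It applies $d_\tau$ to $(k-k_1)(k-k_2)$, where $\tfrac{\tau}{2}|(k-k_1)(k-k_2)|\le 2$ and $\sin x/x\ge\tfrac{\sin 2}{2}$ on $[0,2]$. It also applies $d_\tau$ to the halved arguments $\tfrac{\sigma+k^2}{2}$ and $\tfrac{\sigma_j\pm k_j^2}{2}$, using $|d_\tau(a/2)|\le|d_\tau(a)|$ for $|\tau a|\le\pi+1$. The sharp bound above is simpler. It needs the identity only modulo $2\pi/\tau$, so no choice of representatives for the $\sigma$'s is involved.

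Your endgame also aims at the wrong inequality. Your own (correct) reduction needs the additive bound $\langle k\rangle|k_3|\lesssim\langle k_1\rangle\langle k_2\rangle+\max_i L_i$. You instead conclude with the multiplicative $\langle k\rangle|k_3|\lesssim\max_i L_i\,\langle k_1\rangle\langle k_2\rangle$. On $A_0$ that only gives $|M|\lesssim(L_1L_2L_3)^{-1/2}$, which exceeds $M_0$ by the factor $(\langle k_1\rangle\langle k_2\rangle)^{1/2}$.

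The correct statement is $\langle k\rangle|k_3|\lesssim\langle k_1\rangle\langle k_2\rangle+|(k-k_1)(k-k_2)|$, and it follows from the paper's split:
\begin{itemize}
\item If $|k|>2|k_1|$ and $|k|>2|k_2|$, then $|k_3|\le 2|k|$ and $|(k-k_1)(k-k_2)|\ge|k|^2/4$.
\item If $|k|\le 2|k_1|$ and $|k|\le 2|k_2|$, the left side is $\lesssim\langle k_1\rangle\langle k_2\rangle$.
\item If $|k|>2|k_1|$ and $|k|\le 2|k_2|$, use $|k_3|\le|k-k_2|+|k_1|$, $|k-k_1|\ge|k|/2$ and $\langle k\rangle\lesssim\langle k_2\rangle$.
\end{itemize}
This split also replaces your claim that configurations with $k_1+k_3$ and $k_2+k_3$ both small are absorbed by $N$. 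That claim is not justified as stated, since $(\langle k_1\rangle+\langle k_2\rangle)^2$ is not $\lesssim\langle k_1\rangle\langle k_2\rangle$ in general.
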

	\begin{proof}
		Observing that $\sigma+k^2-(\sigma_1+k_1^2+\sigma_2+k_2^2+\sigma_3-k_3^2)=2(k-k_1)(k-k_2)$, we have
		$$
		|d_\tau ( (k-k_1)(k-k_2) )| \leq \bigg|d_\tau \bigg(\frac{\sigma+k^2}{2}\bigg)\bigg| + \bigg|d_\tau \bigg(\frac{\sigma_1+k_1^2}{2} \bigg)\bigg|+\bigg|d_\tau \bigg(\frac{\sigma_2+k_2^2}{2}\bigg)\bigg| +\bigg|d_\tau \bigg(\frac{\sigma_3-k_3^2}{2}\bigg)\bigg|.
		$$
		Thanks to $|k|,|k_j|\leq \tau^{-\frac{1}{2}}$, we derive that 
		$$
		|(k-k_1)(k-k_2)|\leq 4\tau^{-1}\quad \mbox{and}\quad \bigg|\frac{\sigma+k^2}{2}\bigg|,\bigg| \frac{\sigma_j\pm k_j^2}{2} \bigg|\leq \frac{\pi+1}{2\tau}~ (j=1,2,3).
		$$ 
		In addition, we note that $\frac{\sin x}{x}$ is strictly decreasing over $[0,2]$. Then we have
		\begin{align}
			|d_\tau ( (k-k_1)(k-k_2) )|  
			&  = |(k-k_1)(k-k_2)| \cdot \frac{2}{\tau|(k-k_1)(k-k_2)|}\cdot\bigg|\sin\bigg(\frac{\tau(k-k_1)(k-k_2)}{2}\bigg)\bigg| \notag \\
			&  \geq \frac{\sin 2}{2}|(k-k_1)(k-k_2)| , \notag\\
			\bigg|d_\tau \bigg(\frac{\sigma+k^2}{2}\bigg)\bigg|  
			&  =\frac{2}{\tau} \bigg| \sin \bigg(\frac{\tau (\sigma+k^2)}{4}\bigg) \bigg| \leq  \frac{2}{\tau} \cdot 2\bigg| \sin \bigg(\frac{\tau (\sigma+k^2)}{4}\bigg) \bigg|  \cos \bigg(\frac{\tau (\sigma+k^2)}{4}\bigg) \notag\\
			&  = \frac{2}{\tau} \bigg| \sin \bigg(\frac{\tau (\sigma+k^2)}{2}\bigg) \bigg| =|d_\tau(\sigma+k^2)|, \notag\\
			\bigg|d_\tau \bigg(\frac{\sigma_j\pm k_j^2}{2}\bigg)\bigg|  & \leq  |d_\tau (\sigma_j\pm k_j^2)|, \quad \mbox{for}~j=1,2,3. \label{Eqn:dtau/2<=dtau}
		\end{align}
		It follows that
		$$
		\begin{aligned}
			\langle (k-k_1)&(k-k_2) \rangle^{\frac{1}{2}}    \leq \frac{\sqrt{2}}{\sqrt{\sin 2}}  \langle d_\tau ( (k-k_1)(k-k_2) ) \rangle^{\frac{1}{2}}  \\
			& \leq \frac{3}{2}(\langle d_\tau(\sigma+k^2) \rangle^{\frac{1}{2}} +\langle d_\tau(\sigma_1+k_1^2) \rangle^{\frac{1}{2}} +\langle d_\tau(\sigma_2+k_2^2) \rangle^{\frac{1}{2}} +\langle d_\tau(\sigma_3-k_3^2) \rangle^{\frac{1}{2}} )\\
			& \leq 6(\chi_{A_0}\langle d_\tau(\sigma+k^2) \rangle^{\frac{1}{2}} +\chi_{A_1}\langle d_\tau(\sigma_1+k_1^2) \rangle^{\frac{1}{2}} +\chi_{A_2}\langle d_\tau(\sigma_2+k_2^2) \rangle^{\frac{1}{2}} +\chi_{A_3}\langle d_\tau(\sigma_3-k_3^2) \rangle^{\frac{1}{2}} ) \\
			&=:6   (\chi_{A_0}L_0+\chi_{A_1}L_1+\chi_{A_2}L_2+\chi_{A_3}L_3).
		\end{aligned}
		$$
		
		The analysis is divided into four distinct cases.
		
		\medskip
		
		(1)  $|k|>2|k_1|$ and $|k|>2|k_2|$: In this case we have $|k_3|\leq 2|k|$ and $\langle(k-k_1)(k-k_2)\rangle\geq \frac{1}{4}\langle k \rangle^2$. It follows that
		$$
		\langle k \rangle \leq 2\langle(k-k_1)(k-k_2)\rangle^{\frac{1}{2}} \leq 12\max_{j=0,1,2,3} L_j.
		$$
		This further gives, for $(\sigma_1,\sigma_2,\sigma_3,k_1,k_2,k_3)\in A_0$,
		$$
		|M|=\frac{\langle k \rangle^{\frac{1}{2}} |k_3|}{\langle k_1 \rangle^{\frac{1}{2}} \langle k_2 \rangle^{\frac{1}{2}} \langle k_3 \rangle^{\frac{1}{2}} L_0  L_1 L_2 L_3} 
		\leq \frac{12 |k_3|}{\langle k_1 \rangle^{\frac{1}{2}} \langle k_2 \rangle^{\frac{1}{2}} \langle k_3 \rangle^{\frac{1}{2}} \langle k \rangle^{\frac{1}{2}} L_1 L_2 L_3} \lesssim M_0 \lesssim \sum_{j=0}^3 M_j.
		$$
		Following an analogous reasoning for $(\sigma_1,\sigma_2,\sigma_3,k_1,k_2,k_3)\in A_j$ ($j=1,2,3$), we arrive at $|M|\lesssim \sum_{j=0}^3 M_j$.
		
		\medskip
		
		(2) $|k|\leq 2 |k_1|$ and $|k|\leq 2|k_2|$: In this case we can verify that $|k_3|\leq 4\max\{ |k_1|,|k_2|\}$ and $|k|\leq 2\min\{|k_1|,|k_2|\}$. This implies $|{M}|\lesssim {N}$.
		
		\medskip
		
		In the remaining two cases: (3) $|k|>2|k_1|$ and $|k|\leq 2|k_2|$; as well as (4) $|k|\leq 2|k_1|$ and $|k|> 2|k_2|$, we can proceed in the same way as in \cite{Herr2006} to show that  
		$$
		|M| \lesssim M_0+M_1+M_2+M_3+N.
		$$
		This completes the proof.
	\end{proof}
	
	Inspired by \cite{Herr2006}, we shall use the abbreviation
	$$
	\begin{aligned}
		&\int_{*} \sum_{*} \prod_{j=1}^m f_j(\sigma_j, k_j) \\
		&\qquad:= \underbrace{\int_{-\frac{\pi}{\tau}}^{\frac{\pi}{\tau}} \dots \int_{-\frac{\pi}{\tau}}^{\frac{\pi}{\tau}}}_{m-1} \sum_{k_1, \dots, k_{m-1}}  \prod_{j=1}^{m-1} f_j(\sigma_j, k_j)   f_m\bigg(\sigma - \sum_{j=1}^{m-1} \sigma_j, k - \sum_{j=1}^{m-1} k_j\bigg) \dd\sigma_1 \dots \dd\sigma_{m-1},
	\end{aligned}
	$$
	which is actually the convolution $f_1 * \ldots * f_m(\sigma,k)$. In particular, for given sequences of functions $\{v^n_j\}_{n\in\Z}$, $j=1,2,3$, we have
	$$
	\begin{aligned}
		\widetilde{v_1^nv_2^n}(\sigma,k) &= \tau\sum_{m \in \Z}\widehat{v_1^mv_2^m}(k)\e^{-i m \tau \sigma} = \tau \sum_{m\in\Z}\sum_{k_1\in\Z} \widehat{v_1^m}(k_1)\widehat{v_2^m}(k-k_1) \e^{- im\tau \sigma}  \\
		& = \tau \sum_{m\in\Z}\sum_{k_1\in\Z} \bigg(\frac{1}{2\pi}
		\int_{-\frac{\pi}{\tau}}^{\frac{\pi}{\tau}} \widetilde{v_1^n}(\sigma_1,k_1)\e^{im\tau\sigma_1}\dd\sigma_1
		\bigg) \widehat{v_2^m}(k-k_1) \e^{- im\tau \sigma}  \\
		& =\frac{1}{2\pi} \sum_{k_1\in\Z} 
		\int_{-\frac{\pi}{\tau}}^{\frac{\pi}{\tau}} \widetilde{v_1^n}(\sigma_1,k_1) \bigg( \tau \sum_{m\in\Z} \widehat{v_2^m}(k-k_1) \e^{- im\tau (\sigma-\sigma_1)}  \bigg)
		\dd\sigma_1 \\
		& =\frac{1}{2\pi} \sum_{k_1\in \Z} \int_{-\frac{\pi}{\tau}}^{\frac{\pi}{\tau}} \widetilde{v_1^n}(\sigma_1,k_1) \widetilde{v_2^n}(\sigma-\sigma_1,k-k_1)\dd \sigma_1 = \frac{1}{2\pi} 
		\int_{*} \sum_{*} \prod_{j=1}^2 \widetilde{v_j^n}(\sigma_j, k_j) ,\\
		\widetilde{v_1^nv_2^nv_3^n}(\sigma,k) & =  \frac{1}{(2\pi)^2} 
		\int_{*} \sum_{*} \prod_{j=1}^3 \widetilde{v_j^n}(\sigma_j, k_j).
	\end{aligned}
	$$

	\begin{theorem}\label{Thm:multiplier1}
		For all sequences $\{v^n_j\}_{n \in \mathbb{Z}}$ $(j = 1,2,3)$ supported on the time grid $n\tau \in [-2T_1, 2T_1]$ for $\tau,T_1  \in (0,1]$, there exists $\varepsilon>0$ such that
		$$
		\|\Pi_\tau(\Pi_\tau v_1^n\Pi_\tau v_2^n \, \partial_x \Pi_\tau v_3^n)\|_{X_\tau^{\frac{1}{2},-\frac{1}{2}}}    \lesssim  T_1^\varepsilon \|v_1^n\|_{X_\tau^{\frac{1}{2},\frac{1}{2}}} \|v_2^n\|_{X_\tau^{\frac{1}{2},\frac{1}{2}}} \|\overline{v}_3^n\|_{X_\tau^{\frac{1}{2},\frac{1}{2}}}.
		$$
	\end{theorem}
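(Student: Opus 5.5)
By duality and Plancherel, the estimate reduces to a weighted convolution bound on the Fourier side. Set
\[
f_j(\sigma,k)=\langle k\rangle^{\frac12}\langle d_\tau(\sigma+k^2)\rangle^{\frac12}|\widetilde{\Pi_\tau v_j^n}(\sigma,k)|,\quad j=1,2,
\qquad
f_3(\sigma,k)=\langle k\rangle^{\frac12}\langle d_\tau(\sigma-k^2)\rangle^{\frac12}|\widetilde{\Pi_\tau v_3^n}(\sigma,k)|.
\]
By \eqref{Eqn:equivalent-conjugate}, $\|f_3\|_{L^2l^2}\sim\|\overline{v}_3^n\|_{X_\tau^{\frac12,\frac12}}$. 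Pair the left-hand side with a test function $f_0\in L^2l^2$, supported in $|k|\le\tau^{-1/2}$ and $|\sigma|\le\pi/\tau$. It then suffices to show
\[
\int\!\!\sum_{k}\int_*\sum_* |M|\, f_0(\sigma,k)\prod_{j=1}^3 f_j(\sigma_j,k_j)\;\lesssim\; T_1^{\varepsilon}\,\|f_0\|_{L^2l^2}\prod_j\|f_j\|_{L^2l^2},
\]
where $M$ is the multiplier of Lemma~\ref{Lem:multiplier1}. All frequencies are truncated by $\Pi_\tau$, so the hypothesis $|k|,|k_j|\le\tau^{-1/2}$ holds. The derivative $\partial_x$ falls on the third factor, and conjugation turns $\sigma_3+k_3^2$ into $\sigma_3-k_3^2$, which is why $\overline{v}_3$ carries the $X_\tau$ norm. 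Lemma~\ref{Lem:multiplier1} then gives $|M|\lesssim M_0+M_1+M_2+M_3+N$, and I would treat each piece separately.

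\textbf{The $N$ term.} Here every spatial weight has cancelled and only the four modulation factors to the power $-\frac12$ remain. Going back to physical space, the contribution is bounded by the $l^4_\tau L^4$ norms of the four functions whose transforms are $\langle d_\tau(\cdot)\rangle^{-1/2}f_j$. The discrete $L^4$ Strichartz estimate \eqref{Eqn:discreteL4bound} needs only $b=\frac38<\frac12$, so each factor is bounded by $\|f_j\|_{L^2l^2}$ with room to spare. I would use that room through \eqref{Eqn:T-inverse} (valid since $\frac38<\frac12$), applied to the time-localized $v_j^n$, to extract $T_1^{\varepsilon}$ on at least one factor. For the factor $f_0$, I would use duality \eqref{Eqn:discreteX-38bound}.

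\textbf{The $M_j$ terms, $j=0,1,2,3$.} In each of these, the largest modulation weight has been removed from the denominator, and in exchange $\langle k_1\rangle^{-\frac12}\langle k_2\rangle^{-\frac12}$ appears. I would handle them as in \cite{Herr2006}. The function whose modulation weight is missing is measured in $L^2$, that is, at $b=0$. The remaining three modulation weights $\langle\cdot\rangle^{-1/2}$ are controlled as follows: one pair goes through the $L^4$ Strichartz bound \eqref{Eqn:discreteL4bound}, and the gain $\langle k_1\rangle^{-1/2}\langle k_2\rangle^{-1/2}$ pays for a Sobolev embedding \eqref{eq:sob2dis} into $l^p_\tau L^q$ for the other factors, for instance an $L^2\times L^4\times L^4\times L^\infty$-type Hölder split with $\varepsilon$ loss.

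\textbf{Main obstacle.} The hard part is the two mixed cases (3) and (4) of Lemma~\ref{Lem:multiplier1}, in particular region $A_3$. There the conjugated high frequency $k_3$ carries the derivative, and one needs the sharp counting bound of Herr for the resonance set $\{(k-k_1)(k-k_2)=\text{const}\}$ on the lattice. My plan is to transfer Herr's divisor-counting argument verbatim, checking that the periodicity of $d_\tau$ causes no harm. This is guaranteed by the bounds $|(k-k_1)(k-k_2)|\le4\tau^{-1}$ and $|d_\tau(\sigma/2)|\le|d_\tau(\sigma)|$ from the proof of Lemma~\ref{Lem:multiplier1}, so that $d_\tau$ behaves like the identity on all relevant arguments. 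In every case, the factor $T_1^{\varepsilon}$ comes from lowering some $b=\frac12$ to $\frac12-\varepsilon$ via \eqref{Eqn:T-inverse}, which is affordable because every estimate above has an $\varepsilon$ of slack in $b$.
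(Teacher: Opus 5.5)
Your argument is the paper's proof: after the pointwise bound $|M|\lesssim M_0+M_1+M_2+M_3+N$ from Lemma~\ref{Lem:multiplier1}, each piece is closed by H\"older's inequality together with the discrete $L^4$ bound \eqref{Eqn:discreteL4bound}, its dual forms \eqref{Eqn:discreteX-38bound} and \eqref{eq:dual_sobdis}, the embedding \eqref{eq:sob2dis}, and \eqref{Eqn:T-inverse}, which trades the slack in $b$ for $T_1^{\varepsilon}$. The paper uses, e.g., $l^8_\tau L^8\times l^8_\tau L^8\times l^4_\tau L^4$ for $M_0$, $l^8_\tau L^8\times l^8_\tau L^8\times l^2_\tau L^2$ for $M_3$, and four $l^4_\tau L^4$ factors for $N$. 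Your ``main obstacle'' paragraph is misplaced, however: the mixed cases (3)--(4) belong to Lemma~\ref{Lem:multiplier1}, which you have already invoked, and they follow from elementary pointwise bounds (e.g.\ $\langle k\rangle\langle k_3\rangle\lesssim\langle (k-k_1)(k-k_2)\rangle$ when $|k_1|\le |k-k_2|$, and $|M|\lesssim N$ when $|k_1|>|k-k_2|$), so no divisor counting is needed beyond what is already inside \eqref{Eqn:discreteL4bound}, and region $A_3$ is no harder than the others.
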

	\begin{proof}
		We define 
		$$
		\begin{aligned}
			f_1(\sigma_1,k_1)& =  \langle d_\tau(\sigma_1+k_1^2) \rangle^{\frac{1}{2}} \langle k_1 \rangle^{\frac{1}{2}} \widetilde{\Pi_\tau v^n_1}(\sigma_1,k_1), \\
			f_2(\sigma_2,k_2)& =  \langle d_\tau(\sigma_2+k_2^2) \rangle^{\frac{1}{2}} \langle k_2 \rangle^{\frac{1}{2}} \widetilde{\Pi_\tau v^n_2}(\sigma_2,k_2), \\
			f_3(\sigma_3,k_3)& =  \langle d_\tau(\sigma_3-k_3^2) \rangle^{\frac{1}{2}} \langle k_3 \rangle^{\frac{1}{2}} \widetilde{\Pi_\tau v^n_3}(\sigma_3,k_3), \\
		\end{aligned}
		$$
		and note that 
		$$
		\|\Pi_\tau(\Pi_\tau v_1^n\Pi_\tau v_2^n \partial_x \Pi_\tau v_3^n)\|_{X_\tau^{\frac{1}{2},-\frac{1}{2}}}   =\bigg\|\frac{{1}_{|k| \leq \tau^{-1/2}}}{(2\pi)^{2}} \int_*\sum_* M(\sigma_1,\sigma_2,\sigma_3,k_1,k_2,k_3)\prod_{j=1}^3 f_j(\sigma_j,k_j)\bigg\|_{L^2l^2}. 
		$$
		Thanks to Lemma~\ref{Lem:multiplier1}, it suffices to estimate the above term with $f_j$ replaced by $|f_j|$ and $M$ replaced by $M_l$ ($l=0,1,2,3$) and $N$.
		
		We first consider the estimate for $M_0$:
		$$
		\bigg\|\frac{{1}_{|k| \leq \tau^{-1/2}}}{(2\pi)^{2}} \int_*\sum_* M_0(\sigma_1,\sigma_2,\sigma_3,k_1,k_2,k_3)\prod_{j=1}^3 |f_j(\sigma_j,k_j)|\bigg\|_{L^2l^2}=:J(M_0).
		$$
		By H\"older's inequality and using \eqref{eq:sob2dis} and \eqref{Eqn:discreteL4bound}, we get
		$$
		\begin{aligned}
			J(M_0)& = \|\Pi_\tau \big(\mathcal{F}_{\tau,x}^{-1} ( |\widetilde{\Pi_\tau v_1^n}|) \mathcal{F}_{\tau,x}^{-1} ( |\widetilde{\Pi_\tau v_2^n}|)  \mathcal{F}_{\tau,x}^{-1} (\langle k_3\rangle^{\frac{1}{2}} |\widetilde{\Pi_\tau v_3^n}|)  \big)\|_{l_\tau^2L^2} \\
			& \lesssim  \|\mathcal{F}_{\tau,x}^{-1} ( |\widetilde{\Pi_\tau v_1^n}|)\|_{l_\tau^8L^8}\|\mathcal{F}_{\tau,x}^{-1} ( |\widetilde{\Pi_\tau v_2^n}|)  \|_{l_\tau^8L^8}\| \mathcal{F}_{\tau,x}^{-1} (\langle k_3\rangle^{\frac{1}{2}} |\widetilde{\Pi_\tau \overline{v}_3^n}|)\|_{l_\tau^4L^4}  \\
			&\lesssim  \|v_1^n\|_{X_\tau^{\frac{1}{2},\frac{7}{16}}} \|v_2^n\|_{X_\tau^{\frac{1}{2},\frac{7}{16}}} \|\overline{v}_3^n\|_{X_\tau^{\frac{1}{2},\frac{3}{8}}} .
		\end{aligned}
		$$
		We recall that  $\{v_j^n\}_{n\in \Z}$ is supported on the time grid $n\tau \in [-2T_1,2T_1]$ and that $\eta$ is a smooth and compactly supported function, which is one on $[-1,1]$ and supported in $[-2,2]$. Therefore, we have
		$$
		\begin{aligned}
			\|v_1^n\|_{X_\tau^{\frac{1}{2},\frac{7}{16}}} \|v_2^n\|_{X_\tau^{\frac{1}{2},\frac{7}{16}}} \|\overline{v}_3^n\|_{X_\tau^{\frac{1}{2},\frac{3}{8}}} 
			& =\|\eta(\tfrac{t_n}{2T_1}) v_1^n\|_{X_\tau^{\frac{1}{2},\frac{7}{16}}} \|\eta(\tfrac{t_n}{2T_1})v_2^n\|_{X_\tau^{\frac{1}{2},\frac{7}{16}}} \|\eta(\tfrac{t_n}{2T_1}) \overline{v}_3^n\|_{X_\tau^{\frac{1}{2},\frac{3}{8}}} \\
			&\lesssim T_1^{\varepsilon} \|v_1^n\|_{X_\tau^{\frac{1}{2},\frac{1}{2}}} \|v_2^n\|_{X_\tau^{\frac{1}{2},\frac{1}{2}}}  \|\overline{v}_3^n\|_{X_\tau^{\frac{1}{2},\frac{1}{2}}},
		\end{aligned}
		$$	
		where \eqref{Eqn:T-inverse} was used in the last estimate. Combining the above inequalities gives
		$$
		J(M_0)\lesssim T_1^{\varepsilon} \|v_1^n\|_{X_\tau^{\frac{1}{2},\frac{1}{2}}} \|v_2^n\|_{X_\tau^{\frac{1}{2},\frac{1}{2}}}  \|\overline{v}_3^n\|_{X_\tau^{\frac{1}{2},\frac{1}{2}}}.
		$$
		
		We now turn to the estimates for $M_l$ ($l=1,2,3$):
		$$
		\bigg\|\frac{{1}_{|k| \leq \tau^{-1/2}}}{(2\pi)^{2}} \int_*\sum_* M_l(\sigma_1,\sigma_2,\sigma_3,k_1,k_2,k_3)\prod_{j=1}^3 |f_j(\sigma_j,k_j)|\bigg\|_{L^2l^2}=:J(M_l).
		$$
		With the help of \eqref{eq:dual_sobdis}, \eqref{eq:sob2dis}, \eqref{Eqn:discreteL4bound} and \eqref{Eqn:discreteX-38bound}, we obtain
		\begin{align}
			J(M_1) & \lesssim \big\|\mathcal{F}_{\tau,x}^{-1} ( \langle d_\tau(\sigma_1+k_1^2) \rangle^{\frac{1}{2}}  |\widetilde{\Pi_\tau v^n_1}|  )  \mathcal{F}_{\tau,x}^{-1} ( |\widetilde{\Pi_\tau v_2^n}|)  \mathcal{F}_{\tau,x}^{-1}  ( \langle k_3\rangle^{\frac{1}{2}}|\widetilde{\Pi_\tau v_3^n|} )\big\|_{X_{\tau}^{0,-\frac{3}{8}}} \notag \\
			& \lesssim \big\|\mathcal{F}_{\tau,x}^{-1} ( \langle d_\tau(\sigma_1+k_1^2) \rangle^{\frac{1}{2}}  |\widetilde{\Pi_\tau v^n_1}|  )  \mathcal{F}_{\tau,x}^{-1} ( |\widetilde{\Pi_\tau v_2^n}|)   \mathcal{F}_{\tau,x}^{-1} ( \langle k_3\rangle^{\frac{1}{2}}|\widetilde{\Pi_\tau v_3^n|} )\big\|_{l_\tau^{\frac{15}{14}}L^2}  \notag \\
			& \lesssim \big\|\mathcal{F}_{\tau,x}^{-1} ( \langle d_\tau(\sigma_1+k_1^2) \rangle^{\frac{1}{2}}  |\widetilde{\Pi_\tau v^n_1}|  )\big\|_{ l_\tau^{2}L^8}  \big\| \mathcal{F}_{\tau,x}^{-1} ( |\widetilde{\Pi_\tau v_2^n}|)   \mathcal{F}_{\tau,x}^{-1} ( \langle k_3\rangle^{\frac{1}{2}}|\widetilde{\Pi_\tau v_3^n|} )\big\|_{ l_\tau^{\frac{30}{13}}L^{\frac{8}{3}}} \notag \\
			& \lesssim \big\|\mathcal{F}_{\tau,x}^{-1} ( \langle d_\tau(\sigma_1+k_1^2) \rangle^{\frac{1}{2}}  |\widetilde{\Pi_\tau v^n_1}|  )\big\|_{l_\tau^2 H^{\frac{1}{2}}}  \|\mathcal{F}_{\tau,x}^{-1} ( |\widetilde{\Pi_\tau v_2^n}|) \|_{l_\tau^{\frac{60}{11}} L^8} \big\|\mathcal{F}_{\tau,x}^{-1} ( \langle k_3\rangle^{\frac{1}{2}}|\widetilde{\Pi_\tau \overline{v}_3^n|} )\big\|_{l_\tau^{4}L^{4}} \notag \\
			& \lesssim \big\|\mathcal{F}_{\tau,x}^{-1} ( \langle d_\tau(\sigma_1+k_1^2) \rangle^{\frac{1}{2}}  |\widetilde{\Pi_\tau v^n_1}|  )\big\|_{X_\tau^{\frac{1}{2},0}}   \|\mathcal{F}_{\tau,x}^{-1} ( |\widetilde{\Pi_\tau v_2^n}|)  \|_{X_\tau^{\frac{1}{2},\frac{1}{2}}} \big\|\mathcal{F}_{\tau,x}^{-1} ( \langle k_3\rangle^{\frac{1}{2}}|\widetilde{\Pi_\tau \overline{v}_3^n|} )\big\|_{X_\tau^{0,\frac{3}{8}}}    \notag \\
			& \lesssim \|v_1^n\|_{X_\tau^{\frac{1}{2},\frac{1}{2}}}\|v_2^n\|_{X_\tau^{\frac{1}{2},\frac{1}{2}}}  \|\overline{v}_3^n\|_{X_\tau^{\frac{1}{2},\frac{3}{8}}},\notag \\
			J(M_2) & \lesssim \|v_1^n\|_{X_\tau^{\frac{1}{2},\frac{1}{2}}}  \|v_2^n\|_{X_\tau^{\frac{1}{2},\frac{1}{2}}}\|\overline{v}_3^n\|_{X_\tau^{\frac{1}{2},\frac{3}{8}}},\notag \\ 
			J(M_3) & \lesssim  \big\| \Pi_\tau \big(  \mathcal{F}_{\tau,x}^{-1} (|\widetilde{\Pi_\tau v_1^n }|)  \mathcal{F}_{\tau,x}^{-1} (|\widetilde{\Pi_\tau v_2^n }|) \mathcal{F}_{\tau,x}^{-1} ( \langle d_\tau(\sigma_3-k_3^2) \rangle^{\frac{1}{2}} \langle k_3 \rangle^{\frac{1}{2}} \widetilde{|\Pi_\tau v^n_3|}) \big) \big\|_{X_\tau^{0,-\frac{3}{8}}} \notag \\
			& \lesssim \big\|\mathcal{F}_{\tau,x}^{-1} (|\widetilde{\Pi_\tau v_1^n }|)  \mathcal{F}_{\tau,x}^{-1} (|\widetilde{\Pi_\tau v_2^n }|)  \mathcal{F}_{\tau,x}^{-1} ( \langle d_\tau(\sigma_3-k_3^2) \rangle^{\frac{1}{2}} \langle k_3 \rangle^{\frac{1}{2}} \widetilde{|\Pi_\tau {v}^n_3|}) \big\|_{l_\tau^{\frac{4}{3}} L^\frac{4}{3} } \notag \\
			&\lesssim  \| \Pi_\tau v_1^n\|_{l_\tau^8 L^8} \| \Pi_\tau v_2^n \|_{l_\tau^8 L^8} \big\|\mathcal{F}_{\tau,x}^{-1} ( \langle d_\tau(\sigma_3-k_3^2) \rangle^{\frac{1}{2}} \langle k_3 \rangle^{\frac{1}{2}} \widetilde{|\Pi_\tau {v}^n_3|}) \big\|_{l_\tau^{2} L^2} \notag \\
			& \lesssim \|v_1^n\|_{X_\tau^{\frac{1}{2},\frac{7}{16}}} \|v_2^n\|_{X_\tau^{\frac{1}{2},\frac{7}{16}}}  \|\overline{v}_3^n\|_{X_\tau^{\frac{1}{2},\frac{1}{2}}},\notag
		\end{align} 
		where \eqref{Eqn:equivalent-conjugate} was used for the estimate of $J(M_3)$.
		By \eqref{Eqn:T-inverse}, we further have
		$$
		J(M_1)+J(M_2)+J(M_3)\lesssim T_1^{\varepsilon} \|v_1^n\|_{X_\tau^{\frac{1}{2},\frac{1}{2}}} \|v_1^n\|_{X_\tau^{\frac{1}{2},\frac{1}{2}}} \|\overline{v}_3^n\|_{X_\tau^{\frac{1}{2},\frac{1}{2}}}.
		$$

		Finally, we turn to the estimate for $N$:
		$$
		\bigg\|\frac{{1}_{|k| \leq \tau^{-1/2}}}{(2\pi)^{2}} \int_*\sum_* N(\sigma_1,\sigma_2,\sigma_3,k_1,k_2,k_3)\prod_{j=1}^3 |f_j(\sigma_j,k_j)|\bigg\|_{L^2l^2}=:J(N).
		$$
		Again, using \eqref{Eqn:discreteX-38bound} and \eqref{Eqn:discreteL4bound}, we arrive at
		$$
		\begin{aligned}
			J(N) & \lesssim  \big\| \Pi_\tau ( \mathcal{F}_{\tau,x}^{-1} ( \langle k_1\rangle^{\frac{1}{2}}|\widetilde{\Pi_\tau v_1^n|} ) \mathcal{F}_{\tau,x}^{-1} ( \langle k_2\rangle^{\frac{1}{2}}|\widetilde{\Pi_\tau v_2^n|} )  \mathcal{F}_{\tau,x}^{-1} ( \langle k_3\rangle^{\frac{1}{2}}|\widetilde{\Pi_\tau v_3^n|} ) \big) \big\|_{X_{\tau}^{0,-\frac{3}{8}}}  \\
			&\lesssim  \big\| \mathcal{F}_{\tau,x}^{-1} ( \langle k_1\rangle^{\frac{1}{2}}|\widetilde{\Pi_\tau v_1^n|} ) \mathcal{F}_{\tau,x}^{-1} ( \langle k_2\rangle^{\frac{1}{2}}|\widetilde{\Pi_\tau v_2^n|} )  \mathcal{F}_{\tau,x}^{-1} ( \langle k_3\rangle^{\frac{1}{2}}|\widetilde{\Pi_\tau \overline{v}_3^n|} )\big\|_{l_\tau^{\frac{4}{3}} L^\frac{4}{3}}  \\
			&\lesssim  \big\| \mathcal{F}_{\tau,x}^{-1} ( \langle k_1\rangle^{\frac{1}{2}}|\widetilde{\Pi_\tau v_1^n|} )\big\|_{l_\tau^4 L^4}\big\| \mathcal{F}_{\tau,x}^{-1} ( \langle k_2\rangle^{\frac{1}{2}}|\widetilde{\Pi_\tau v_2^n|} )\big\|_{l_\tau^4 L^4} \big\| \mathcal{F}_{\tau,x}^{-1} ( \langle k_3\rangle^{\frac{1}{2}}|\widetilde{\Pi_\tau \overline{v}_3^n|} )\big\|_{l_\tau^4 L^4} \\
			&\lesssim  T_1^{\varepsilon} \|v_1^n\|_{X_\tau^{\frac{1}{2},\frac{1}{2}}} \|v_2^n\|_{X_\tau^{\frac{1}{2},\frac{1}{2}}} \|\overline{v}_3^n\|_{X_\tau^{\frac{1}{2},\frac{1}{2}}}.
		\end{aligned}
		$$
		This finally shows the desired result.
	\end{proof}

	\begin{lemma}\label{Lem:multiplier1-Y}
		We adopt the notation from Lemma~\ref{Lem:multiplier1} and define
		$$
		\widetilde{M}(\sigma_1,\sigma_2,\sigma_3,k_1,k_2,k_3)= \langle d_\tau(\sigma+k^2)\rangle^{-\frac{1}{2}} M(\sigma_1,\sigma_2,\sigma_3,k_1,k_2,k_3) , 
		$$
		and, for $\beta \in (0,\frac{1}{6})$,
		$$
		\begin{aligned}
			&\widetilde{M}_0(\sigma_1,\sigma_2,\sigma_3,k_1,k_2,k_3)\\
			&\qquad=\frac{\chi_{A_0}}{\langle k \rangle^{\frac{1}{2}-3\beta} \langle k_1 \rangle^{\frac{1}{2}} \langle k_2 \rangle^{\frac{1}{2}} \langle k_3 \rangle^{\frac{1}{2}-3\beta} \langle d_\tau(\sigma_1+k_1^2)\rangle^{\frac{1}{2}+\beta} \langle d_\tau(\sigma_2+k_2^2)\rangle^{\frac{1}{2}+\beta}\langle d_\tau(\sigma_3-k_3^2)\rangle^{\frac{1}{2}+\beta} }, \\
			&\widetilde{M}_j(\sigma_1,\sigma_2,\sigma_3,k_1,k_2,k_3)= \langle d_\tau(\sigma+k^2)\rangle^{-\frac{1}{2}} M_j(\sigma_1,\sigma_2,\sigma_3,k_1,k_2,k_3) ,\quad j=1,2,3, \\
			&\widetilde{N}(\sigma_1,\sigma_2,\sigma_3,k_1,k_2,k_3)= \langle d_\tau(\sigma+k^2)\rangle^{-\frac{1}{2}}  N(\sigma_1,\sigma_2,\sigma_3,k_1,k_2,k_3).
		\end{aligned}
		$$
		If $|k|,|k_j|\leq \tau^{-\frac{1}{2}}$, then the estimate
		$$
		|\widetilde{M}| \lesssim \widetilde{M}_0+\widetilde{M}_1+\widetilde{M}_2+\widetilde{M}_3+\widetilde{N}
		$$
		holds true.
	\end{lemma}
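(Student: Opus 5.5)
The plan is to reduce this lemma to Lemma~\ref{Lem:multiplier1}. Multiplying the pointwise bound $|M|\lesssim M_0+M_1+M_2+M_3+N$ by $\langle d_\tau(\sigma+k^2)\rangle^{-\frac12}$ gives at once
\[
|\widetilde M|\lesssim \langle d_\tau(\sigma+k^2)\rangle^{-\frac12}M_0+\widetilde M_1+\widetilde M_2+\widetilde M_3+\widetilde N .
\]
The terms $\widetilde M_j$ ($j=1,2,3$) and $\widetilde N$ are then exactly those in the statement. So the only work is the $A_0$ contribution: we must show that on $A_0$ (and in the cases of the proof of Lemma~\ref{Lem:multiplier1} that produced $M_0$) the quantity $\langle d_\tau(\sigma+k^2)\rangle^{-\frac12}M_0$, or more precisely the piece of $|\widetilde M|$ it controls, is bounded by $\widetilde M_0$. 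It is cleaner to redo the $A_0$ case directly from $|\widetilde M|$ rather than from $M_0$, since $M_0$ has already discarded the factor $\langle k\rangle^{\frac12}|k_3|/\langle k_3\rangle^{\frac12}$ we need to track.

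\textbf{The region $A_0$.} Here $L_0=\langle d_\tau(\sigma+k^2)\rangle^{\frac12}$ is the largest modulation, and
\[
|\widetilde M|=\frac{\langle k\rangle^{\frac12}|k_3|}{\langle k_1\rangle^{\frac12}\langle k_2\rangle^{\frac12}\langle k_3\rangle^{\frac12}L_0^2L_1L_2L_3}.
\]
From the proof of Lemma~\ref{Lem:multiplier1} we have $\langle (k-k_1)(k-k_2)\rangle^{\frac12}\lesssim L_0$ on $A_0$; this uses $|k|,|k_j|\le\tau^{-\frac12}$ and the $\sin x/x$ argument. The key step is to split the surplus power of $L_0$: write $L_0^{-2}=L_0^{-(1-2\beta)}\,L_0^{-1-2\beta}$. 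Since $L_0\ge L_j$ on $A_0$, the factor $L_0^{-(1-2\beta)}$ can be spent to upgrade the modulation factors, for instance by bounding $L_0^{-2\beta}\cdot L_0^{-\beta}\ldots$ against $L_1^{-2\beta}L_2^{-2\beta}L_3^{-2\beta}$ using $L_0^{-6\beta}\le \prod_j L_j^{-2\beta}$. We will distribute these to produce $L_j^{1+2\beta}$, i.e.\ $\langle d_\tau\rangle^{\frac12+\beta}$, in each denominator. This costs $L_0^{6\beta}$, and $6\beta<1$ is where $\beta<\frac16$ enters.

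What remains is a factor $L_0^{-(1-6\beta)}$ (up to the accounting above). We bound it by $\langle(k-k_1)(k-k_2)\rangle^{-\frac12(1-6\beta)}$ and need
\[
\frac{\langle k\rangle^{\frac12}|k_3|}{\langle k_3\rangle^{\frac12}}\,\langle(k-k_1)(k-k_2)\rangle^{-\frac12+3\beta}\lesssim \langle k\rangle^{-\frac12+3\beta}\langle k_3\rangle^{-\frac12+3\beta},
\]
which is equivalent to
\[
\langle k\rangle\langle k_3\rangle\lesssim\langle(k-k_1)(k-k_2)\rangle
\]
raised to the power $\frac12-3\beta$ (the spare $\langle k\rangle^{0},\langle k_3\rangle^{0}$ powers matching). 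We verify this in the frequency cases of Lemma~\ref{Lem:multiplier1}.

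\textbf{Frequency cases.} In case (1), $|k|>2|k_1|,2|k_2|$, we have $\langle(k-k_1)(k-k_2)\rangle\gtrsim\langle k\rangle^2\gtrsim\langle k\rangle\langle k_3\rangle$ because $|k_3|\le2|k|$. Cases (3)/(4), say $|k|>2|k_1|$ and $|k|\le2|k_2|$, give $|k-k_1|\sim|k|$, so we need $|k_2+k_3|=|k-k_1|\cdot$ hmm; rather $|k-k_2|=|k_1+k_3|$, and we need $\langle k_1+k_3\rangle\gtrsim\langle k_3\rangle$. This fails when $k_3\approx -k_1$, but in that subcase $|k_3|\lesssim|k_1|$, and the surplus $\langle k_3\rangle$ can be absorbed by $\langle k_1\rangle^{\frac12}$ and the bounded $\langle k\rangle$ ratio. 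This is handled as in Herr's treatment; when the product is small we instead fall into the $\widetilde N$ bound as in case (2). Case (2) already yields $|M|\lesssim N$, hence $|\widetilde M|\lesssim\widetilde N$.

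\textbf{Main obstacle.} I expect the main obstacle to be this mixed case analysis (3)/(4) with near-resonant $k_3\approx -k_1$ inside $A_0$. There I would first try to redirect the contribution to $\widetilde N$ or $\widetilde M_j$, using $|k|\lesssim|k_2|$ so that $\langle k\rangle^{\frac12}|k_3|^{\frac12}\lesssim\langle k_1\rangle^{\frac12}\langle k_2\rangle^{\frac12}$. The other cases and the remaining regions $A_1,A_2,A_3$ follow directly from Lemma~\ref{Lem:multiplier1}.
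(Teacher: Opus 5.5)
Your proposal is essentially the paper's proof. Off $A_0$ you multiply Lemma~\ref{Lem:multiplier1} by $\langle d_\tau(\sigma+k^2)\rangle^{-\frac12}$. On $A_0$ you use $L_0^{6\beta}\geq (L_1L_2L_3)^{2\beta}$ (this is where $\beta<\frac16$ enters) and $\langle (k-k_1)(k-k_2)\rangle\lesssim L_0^2$ to reduce to $\langle k\rangle\langle k_3\rangle\lesssim \langle (k-k_1)(k-k_2)\rangle$. This holds in case (1) and in the subregion $|k_1|\le |k-k_2|$ of case (3). Case (2) and the subregion $|k_1|>|k-k_2|$ (where $|k_3|<2|k_1|$ and $|k|\le 2|k_2|$) go to $\widetilde N$, exactly as in the paper.

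The one thing to correct is your exponent bookkeeping on $A_0$. You start from $|\widetilde M|$ with its full numerator $\langle k\rangle^{\frac12}|k_3|\langle k_3\rangle^{-\frac12}$, so the factor left after spending $L_0^{-6\beta}$ is $L_0^{-(2-6\beta)}$, not $L_0^{-(1-6\beta)}$. The condition to check is therefore $\langle k\rangle\langle k_3\rangle\lesssim\langle (k-k_1)(k-k_2)\rangle$ raised to the power $1-3\beta$. Your displayed inequality pairs that full numerator with only $\langle (k-k_1)(k-k_2)\rangle^{-\frac12+3\beta}$, and as written it is false: take $k_1=k_2=0$ and $k_3=k$ large. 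The power $\frac12-3\beta$ is correct only if you start, as the paper does, from $M_0$, where one factor $L_0$ has already absorbed $\langle k\rangle^{\frac12}\langle k_3\rangle^{\frac12}$.
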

	\begin{proof}
		As an example, we consider the region $A_0$ and verify that
		$$
		\langle d_\tau(\sigma+k^2)\rangle^{-\frac{1}{2}}M_0 \lesssim \widetilde{M}_0+\widetilde{N}.
		$$
		The other regions are treated similarly. As before, the analysis is divided into four distinct cases.
		
		\medskip
		
		(1)  $|k|>2|k_1|$ and $|k|>2|k_1|$: In this case we have $|k_3|\leq 2|k|$. From the proof of Lemma~\ref{Lem:multiplier1}, we recall that
		$$
		\begin{aligned}
			|d_\tau( (k-k_1)(k-k_2) )| \geq \frac{\sin 2}{2}|(k-k_1)(k-k_2)| \geq \frac{\sin 2}{8}|k|^2 
		\end{aligned}
		$$
		and
		$$
		|d_\tau ( (k-k_1)(k-k_2) )| \leq |d_\tau ({\sigma+k^2})| + |d_\tau ({\sigma_1+k_1^2} )|+|d_\tau ({\sigma_2+k_2^2})| +|d_\tau ({\sigma_3-k_3^2})|.
		$$
		Therefore, we obtain
		$$
		\langle k \rangle^2 \lesssim \langle d_\tau(\sigma+k^2)\rangle.
		$$
		This further implies
		$$
		\langle d_\tau( \sigma +k^2) \rangle^{-\frac{1}{2}} \lesssim \langle d_\tau(\sigma_1+ k_1^2) \rangle^{-\beta} \langle d_\tau(\sigma_2+ k_2^2) \rangle^{-\beta} \langle d_\tau(\sigma_3-k_3^2) \rangle^{-\beta} \langle k \rangle^{-\frac{1}{2}+3\beta}\langle k_3\rangle^{-\frac{1}{2}+3\beta},
		$$
		which shows
		$$
		\langle d_\tau(\sigma+k^2)\rangle^{-\frac{1}{2}}M_0 \lesssim \widetilde{M}_0.
		$$
		
		\medskip
		
		(2) $|k|\leq 2 |k_1|$ and $|k|\leq 2|k_2|$: In this case we can verify that $|k_3|\leq 4\max\{ |k_1|,|k_2|\}$ and $|k|\leq 2\min\{|k_1|,|k_2|\}$. This implies $|\widetilde{M}|\lesssim \widetilde{N}$.
		
		\medskip
		
		(3) $|k|>2|k_1|$ and $|k|\leq 2|k_2|$: In this case we have $|k|\leq 2|k-k_1|$ and $|k| |k-k_2|\leq 2\langle (k-k_1)(k-k_2)\rangle$. We assume $k_3 \neq 0$, as the term $\widetilde{M}$ vanishes otherwise. In the subregion where $|k_1|\leq |k-k_2|$ we obtain $|k_3|\leq |k-k_2|+|k_1|\leq 2|k-k_2|$. Then, by noting that 
		$$
		|(k-k_1)(k-k_2)|\leq |k|^2+|kk_1|+|kk_2|+|k_1k_2|<
		|k|^2+\frac{1}{2}|k|^2+|kk_2|+\frac{1}{2}|kk_2|\leq 3\tau^{-1} 
		$$ 
		where $|k|,|k_j|\leq \tau^{-\frac{1}{2}}$ was used, we have 
		$$
		\langle k \rangle \langle k_3 \rangle \lesssim       \langle (k-k_1)(k-k_2) \rangle \lesssim   \langle d_\tau( (k-k_1)(k-k_2) )\rangle \lesssim \langle d_\tau(\sigma+k^2)\rangle.
		$$
		This further gives
		$$
		\begin{aligned}
			\langle d_\tau(\sigma+k^2)\rangle^{-\frac{1}{2}} & \lesssim  \langle d_\tau(\sigma+k^2)\rangle^{-\frac{1}{2}+3\beta} \langle d_\tau(\sigma_1+k_1^2)\rangle^{-\beta} \langle d_\tau(\sigma_2+k_2^2)\rangle^{-\beta}  \langle d_\tau(\sigma_3-k_3^2)\rangle^{-\beta} \\
			& \lesssim   \langle k\rangle^{-\frac{1}{2}+3\beta}  \langle k_3\rangle^{-\frac{1}{2}+3\beta} \langle d_\tau(\sigma_1+k_1^2)\rangle^{-\beta} \langle d_\tau(\sigma_2+k_2^2)\rangle^{-\beta}  \langle d_\tau(\sigma_3-k_3^2)\rangle^{-\beta}.
		\end{aligned}
		$$
		From this, it follows that
		$$
		\langle d_\tau(\sigma+k^2)\rangle^{-\frac{1}{2}}M_0 \lesssim \widetilde{M}_0.
		$$
		On the other hand, we have $|k_3|\leq 2|k_1|$ in the subregion where $|k_1|>|k-k_2|$. Then we obtain $\widetilde{M} \lesssim \widetilde{N}$.
		
		\medskip
		
		(4) $|k|\leq2|k_1|$ and $|k| > 2|k_2|$: The desired estimate follows from the symmetry of $\widetilde{M}$ in $k_1$ and $k_2$.
	\end{proof}

	\begin{theorem}\label{Thm:multiplier1-Y}
		For all sequences $\{v^n_j\}_{n \in \mathbb{Z}}$ $(j = 1,2,3)$ supported on the time grid $n\tau \in [-2T_1, 2T_1]$ for $\tau,T_1  \in (0,1]$, there exists $\varepsilon>0$ such that
		$$
		\|\Pi_\tau(\Pi_\tau v_1^n\Pi_\tau v_2^n \, \partial_x \Pi_\tau v_3^n)\|_{Y_\tau^{\frac{1}{2},-1}}  \lesssim T_1^\varepsilon \|v_1^n\|_{X_\tau^{\frac{1}{2},\frac{1}{2}}} \|v_2^n\|_{X_\tau^{\frac{1}{2},\frac{1}{2}}} \|\overline{v}_3^n\|_{X_\tau^{\frac{1}{2},\frac{1}{2}}}.
		$$
	\end{theorem}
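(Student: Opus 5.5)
We follow the proof of Theorem~\ref{Thm:multiplier1}, with Lemma~\ref{Lem:multiplier1-Y} in place of Lemma~\ref{Lem:multiplier1}. Define $f_j$ as in that proof. By the definition \eqref{Ytausb} of the $Y_\tau^{\frac12,-1}$ norm, the left-hand side equals, up to constants,
\[
\Big\|{1}_{|k|\le\tau^{-1/2}}\int_*\sum_*\widetilde M\prod_{j=1}^3 f_j\Big\|_{L^1l^2}.
\]
Lemma~\ref{Lem:multiplier1-Y} applies because all frequencies are truncated by $\Pi_\tau$. We may replace $f_j$ by $|f_j|$, and it then suffices to bound the five pieces $J_Y(\widetilde M_0),\dots,J_Y(\widetilde M_3),J_Y(\widetilde N)$ in $L^1l^2$.

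\textbf{Pieces $\widetilde M_j$, $j=1,2,3$, and $\widetilde N$.} Here the multiplier is $\langle d_\tau(\sigma+k^2)\rangle^{-\frac12}$ times the corresponding $M_j$ or $N$. The weight $\langle d_\tau(\sigma+k^2)\rangle^{-\frac12-\delta}$ has uniformly bounded $L^2_\sigma$ norm over one period, as in the proof of \eqref{eq:XtautoYtao}. Cauchy--Schwarz in $\sigma$ therefore gives
\[
\|g\|_{L^1 l^2}\lesssim \big\|\langle d_\tau(\sigma+k^2)\rangle^{\frac12+\delta} g\big\|_{L^2l^2}.
\]
Apply this with $g$ the piece. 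The net $\sigma$-weight becomes $\langle d_\tau(\sigma+k^2)\rangle^{-\frac12+\delta}$ times $M_j$ or $N$, that is, $X_\tau^{\frac12,-\frac12+\delta}$-type bounds. These follow from the arguments for $J(M_j)$ and $J(N)$ in Theorem~\ref{Thm:multiplier1}. Those arguments only used \eqref{Eqn:discreteX-38bound}, which costs $-\frac38$ in $b$, so a small $\delta<\frac18$ is affordable. For $M_1,M_2,M_3$, where the weight $\langle d_\tau(\sigma+k^2)\rangle^{-\frac12}$ was used directly, we instead move $\delta$ from the dominant modulation $L_j$ (largest in region $A_j$) onto $L_0$. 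The resulting loss in the $j$-th factor is absorbed via \eqref{Eqn:T-inverse}, and since the data live in $X_\tau^{\frac12,\frac12}$ we keep a factor $T_1^\varepsilon$.

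\textbf{Main obstacle: the piece $\widetilde M_0$.} Its output multiplier carries $\langle k\rangle^{-\frac12+3\beta}$ and no output modulation weight, so we cannot pass through $L^2$ directly. The plan is to use that $\widetilde M_0$ has no $\sigma$-dependence on the output side. Then
\[
\Big\|\int_*\sum_*\widetilde M_0\prod_j|f_j|\Big\|_{L^1l^2}\le \Big\|\langle k\rangle^{-\frac12+3\beta}\,\widehat{(F_1F_2F_3)}(k)\Big\|_{l^2_k}
\]
after integrating out $\sigma$. Here
\[
\widetilde F_1=\langle d_\tau(\sigma_1+k_1^2)\rangle^{-\beta}|\widetilde{\Pi_\tau v_1^n}|,
\]
and similarly for $F_2$, while $F_3$ also absorbs $\langle k_3\rangle^{3\beta}$. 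Note that $\int\!d\sigma$ of a convolution equals the product of the $\sigma$-integrals. Each factor is then controlled through its $L^1_\sigma l^2_k$ norm, which by Cauchy--Schwarz is bounded by $X_\tau$-norms with exponent $b=\frac12+\beta-\beta+\delta$. This is too much by $\delta$, so instead we keep $\beta$ and note that $\widetilde F_j$ evaluated at a time point is controlled by $\|v_j\|_{X^{s,\frac12}_\tau}$ up to $\langle d\rangle^{-\beta}$ integrability. Concretely, we estimate the spatial product at fixed time-frequency by Sobolev: $H^{\frac12-}\cdot H^{\frac12-}\cdot H^{\frac12-3\beta}\hookrightarrow H^{-\frac12+3\beta}$ in one dimension for small $\beta$. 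Each factor is then bounded in $L^1_\sigma H^{\frac12-}$, which by Cauchy--Schwarz costs $\langle d\rangle^{\frac12+\beta'}$ with $\beta'<\beta$ available from $\widetilde M_0$.

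We expect this balancing of $\beta$ to be the delicate step. We would choose $\beta\in(0,\frac16)$ small and use the slack in spatial regularity (we have $\frac12$ against the required $\frac12-3\beta$) to gain the time decay. Finally, \eqref{Eqn:T-inverse} provides $T_1^\varepsilon$ from any strict inequality below $b=\frac12$; if no strict gap remains, we instead extract $T_1^\varepsilon$ from the $\widetilde M_j$ and $\widetilde N$ pieces only and handle $\widetilde M_0$ by interpolating with a $b<\frac12$ bound.
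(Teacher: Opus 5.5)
Your plan is essentially the paper's proof. The paper also reduces via Lemma~\ref{Lem:multiplier1-Y}. It treats $\widetilde M_1,\widetilde M_2,\widetilde M_3,\widetilde N$ by Cauchy--Schwarz in the output variable, $\|\langle d_\tau(\sigma+k^2)\rangle^{-\frac12}g\|_{L^1_\sigma}\lesssim\|\langle d_\tau(\sigma+k^2)\rangle^{\frac18}g\|_{L^2_\sigma}$ (your $\delta=\frac18$), and then reuses the $X_\tau^{0,-\frac38}$ bounds from Theorem~\ref{Thm:multiplier1}. For $\widetilde M_0$ it integrates out $\sigma$ by Young's inequality and applies Cauchy--Schwarz in each $\sigma_j$, spending only half of $\beta$ (with $\beta=\frac1{24}$, this lands in $X_\tau^{\frac12,\frac{23}{48}}$). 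It then closes the $k$-sum by H\"older and Young in $l^4_k$ and $l^{4/3}_k$, which is the discrete form of your $L^4$-based product estimate $H^{\frac12-}\cdot H^{\frac12-}\cdot H^{\frac12-3\beta}\to H^{-\frac12+3\beta}$.

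Two side remarks in your write-up are wrong, though neither is needed.

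First, the arguments for $J(M_1)$, $J(M_3)$ (and $J(M_2)$ by symmetry) in Theorem~\ref{Thm:multiplier1} do not use $\langle d_\tau(\sigma+k^2)\rangle^{-\frac12}$ at full strength. They pass through $\|\cdot\|_{X_\tau^{0,-\frac38}}$ just like $J(N)$, so your first argument already covers them. The alternative you propose, shifting $\delta$ onto the dominant input modulation, would demand $v_j\in X_\tau^{\frac12,\frac12+\delta}$. \eqref{Eqn:T-inverse} cannot absorb that, since it only applies for $b<\frac12$.

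Second, the exponent $b=\frac12+\beta-\beta+\delta$ is a miscalculation. For any $\beta'\in(0,\beta)$,
$\|\langle d_\tau(\sigma_j\pm k_j^2)\rangle^{-\beta}\widetilde{\Pi_\tau v_j^n}\|_{L^1_\sigma}\lesssim\|\langle d_\tau(\sigma_j\pm k_j^2)\rangle^{\frac12+\beta'-\beta}\widetilde{\Pi_\tau v_j^n}\|_{L^2_\sigma}$,
which gives $b=\frac12-(\beta-\beta')<\frac12$ directly; this is what you eventually use. So there is no obstacle at $\widetilde M_0$, and your final fallback of taking $T_1^\varepsilon$ only from the other pieces is moot. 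It would not be legitimate anyway, because every piece must carry the factor $T_1^\varepsilon$.
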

	\begin{proof}
		We adopt the notation from Theorem~\ref{Thm:multiplier1} and note that
		$$
		\|\Pi_\tau(\Pi_\tau v_1^n\Pi_\tau v_2^n \partial_x \Pi_\tau v_3^n)\|_{Y_\tau^{\frac{1}{2},-1}}=\bigg\|\frac{{1}_{|k| \leq \tau^{-1/2}}}{(2\pi)^{2}} \int_*\sum_* \widetilde{M}(\sigma_1,\sigma_2,\sigma_3,k_1,k_2,k_3)\prod_{j=1}^3 f_j(\sigma_j,k_j)\bigg\|_{L^1l^2}. 
		$$
		Thanks to Lemma~\ref{Lem:multiplier1-Y},  it suffices to estimate the above term with $f_j$ replaced by $|f_j|$ and $\widetilde{M}$ replaced by $\widetilde{M}_l$ ($l=0,1,2,3$) and $\widetilde{N}$.
		
		We first consider the estimate for $\widetilde{M}_0$: 
		$$
		\bigg\|\frac{{1}_{|k| \leq \tau^{-1/2}}}{(2\pi)^{2}} \int_*\sum_* \widetilde{M}_0 (\sigma_1,\sigma_2,\sigma_3,k_1,k_2,k_3)\prod_{j=1}^3 |f_j(\sigma_j,k_j)|\bigg\|_{L^1l^2}=:J(\widetilde{M}_0).
		$$
		By Young's and H\"older's inequalities, we have
		\begin{align}
			& \bigg\|\frac{{1}_{|k| \leq \tau^{-1/2}}}{(2\pi)^{2}} \int_*  \widetilde{M}_0 (\sigma_1,\sigma_2,\sigma_3,k_1,k_2,k_3)\prod_{j=1}^3 |f_j(\sigma_j,k_j)|\bigg\|_{L^1(\sigma)} \notag \\
			&\qquad ={1}_{|k| \leq \tau^{-1/2}} \langle k \rangle^{-\frac{1}{2}+3\beta}\notag \\
			&\qquad\quad\cdot\bigg\|\int_{-\frac{\pi}{\tau}}^{\frac{\pi}{\tau}}  \int_{-\frac{\pi}{\tau}}^{\frac{\pi}{\tau}}  \prod_{j=1}^{2} \frac{|f_j(\sigma_j, k_j)|}{\langle k_j\rangle^{\frac{1}{2}}\langle d_\tau( \sigma_j+k_j^2) \rangle^{\frac{1}{2}+\beta}}  \frac{ |f_3(\sigma -  \sigma_1-\sigma_2, k_3)|}{\langle k_3 \rangle^{\frac{1}{2}-3\beta} \langle d_\tau(\sigma -  \sigma_1-\sigma_2 -k_3^2) \rangle^{\frac{1}{2}+\beta} }\dd\sigma_1\dd\sigma_2\bigg\|_{L^1(\sigma)}\notag \\
			&\qquad\lesssim {1}_{|k| \leq \tau^{-1/2}} \langle k \rangle^{-\frac{1}{2}+3\beta}  \prod_{j=1}^{2} 
			\bigg\| \frac{|f_j(\sigma_j, k_j)|}{\langle k_j\rangle^{\frac{1}{2}}\langle d_\tau( \sigma_j+k_j^2) \rangle^{\frac{\beta}{2}}} \bigg\|_{L^2(\sigma_j)} \bigg\| \frac{ |f_3(\sigma_3, k_3)|}{\langle k_3 \rangle^{\frac{1}{2}-3\beta} \langle d_\tau(\sigma_3 -k_3^2) \rangle^{\frac{\beta}{2}} }\bigg\|_{L^2(\sigma_3)} \notag\\
			&\qquad\quad\cdot  \prod_{j=1}^{2}   \|\langle d_\tau( \sigma_j+k_j^2) \rangle^{-\frac{1}{2}-\frac{\beta}{2}}\|_{L^2(\sigma_j)}   \|\langle d_\tau( \sigma_3-k_3^2) \rangle^{-\frac{1}{2}-\frac{\beta}{2}}\|_{L^2(\sigma_3)}.\notag
		\end{align}
		Thanks to \eqref{Eqn:dtau/2<=dtau} and the fact that $|d_{\tau}( \tfrac{\sigma_j\pm k_j^2}{2})| \sim | \tfrac{\sigma_j\pm k_j^2}{2} |$ since $|\tfrac{\tau (\sigma_j\pm k_j^2)}{2} | \leq \pi$, we have
		$$
		\|\langle d_\tau( \sigma_j\pm k_j^2) \rangle^{-\frac{1}{2}-\frac{\beta}{2}}\|_{L^2(\sigma_j)} \leq  \bigg\|\bigg\langle d_\tau\bigg( \frac{\sigma_j\pm k_j^2}{2}\bigg) \bigg\rangle^{-\frac{1}{2}-\frac{\beta}{2}}\bigg\|_{L^2(\sigma_j)} \lesssim \bigg\|\bigg\langle  \frac{\sigma_j\pm k_j^2}{2} \bigg\rangle^{-\frac{1}{2}-\frac{\beta}{2}}\bigg\|_{L^2(\sigma_j)} \lesssim 1.
		$$
		
		Denote $g_j(\sigma_j,k_j)=|f_j(\sigma_j,k_j)|\langle d_\tau( \sigma_j+k_j^2) \rangle^{-\frac{\beta}{2}}$ for $j=1,2$, and $g_3(\sigma_3,k_3)=|f_j(\sigma_3,k_3)|\langle d_\tau( \sigma_3-k_3^2) \rangle^{-\frac{\beta}{2}}$. Taking $\beta=\frac{1}{24}$ and using H\"older's and Young's inequalities, we arrive at
		$$
		\begin{aligned}
			J(\widetilde{M}_0) &\lesssim \| \langle k \rangle^{-\frac{3}{8}}\|_{l^4} \bigg\| \sum_{k=k_1+k_2+k_3} \langle k_1 \rangle^{-\frac{1}{2}} \langle k_2 \rangle^{-\frac{1}{2}} \langle k_3 \rangle^{-\frac{3}{8}} \prod_{j=1}^{3} \|g_j(\cdot,k_j)\|_{L^2}\bigg\|_{l^4}\\
			&\lesssim  \prod_{j=1}^{3} \big\|  \langle k_j \rangle^{-\frac{3}{8}}  \|g_j(\cdot,k_j)\|_{L^2} \big\|_{l^\frac{4}{3}} \lesssim \prod_{j=1}^{3}   \|g_j\|_{L^2 l^2} \lesssim \|v_1^n\|_{X_\tau^{\frac{1}{2},\frac{23}{48}}} \|v_2^n\|_{X_\tau^{\frac{1}{2},\frac{23}{48}}} \|\overline{v}_3^n\|_{X_\tau^{\frac{1}{2},\frac{23}{48}}}.
		\end{aligned}
		$$
		By \eqref{Eqn:T-inverse}, we have
		$$
		J(\widetilde{M}_0) \lesssim T_1^\varepsilon\|v_1^n\|_{X_\tau^{\frac{1}{2},\frac{1}{2}}} \|v_2^n\|_{X_\tau^{\frac{1}{2},\frac{1}{2}}} \|\overline{v}_3^n\|_{X_\tau^{\frac{1}{2},\frac{1}{2}}}.
		$$
		
		For the estimates of $\widetilde{M}_j$ ($j=1,2,3$) and $\widetilde{N}$, noting that
		$$
		\|\langle d_\tau(\sigma + k^2) \rangle^{-\frac{1}{2}} g (\sigma,k)\|_{L^1(\sigma)}
		\lesssim \|\langle d_\tau(\sigma + k^2) \rangle^{\frac{1}{8}} g (\sigma,k)\|_{L^2(\sigma)},
		$$
		we derive that 
		$$
		\begin{aligned}
			J(\widetilde{M}_j)& :=\bigg\|\frac{{1}_{|k| \leq \tau^{-1/2}}}{(2\pi)^{2}} \int_*\sum_* \widetilde{M}_j (\sigma_1,\sigma_2,\sigma_3,k_1,k_2,k_3)\prod_{j=1}^3 |f_j(\sigma_j,k_j)|\bigg\|_{L^1l^2} \\
			&\lesssim \bigg\|\langle d_\tau(\sigma + k^2) \rangle^{\frac{1}{8}} \frac{{1}_{|k| \leq \tau^{-1/2}}}{(2\pi)^{2}} \int_*\sum_*  {M}_j (\sigma_1,\sigma_2,\sigma_3,k_1,k_2,k_3)\prod_{j=1}^3 |f_j(\sigma_j,k_j)|\bigg\|_{L^2l^2},\\
			J(\widetilde{N})& :=\bigg\|\frac{{1}_{|k| \leq \tau^{-1/2}}}{(2\pi)^{2}} \int_*\sum_* \widetilde{N}(\sigma_1,\sigma_2,\sigma_3,k_1,k_2,k_3)\prod_{j=1}^3 |f_j(\sigma_j,k_j)|\bigg\|_{L^1 l^2} \\
			& \lesssim \bigg\|\langle d_\tau(\sigma + k^2) \rangle^{\frac{1}{8}}\frac{{1}_{|k| \leq \tau^{-1/2}}}{(2\pi)^{2}} \int_*\sum_*  {N}(\sigma_1,\sigma_2,\sigma_3,k_1,k_2,k_3)\prod_{j=1}^3 |f_j(\sigma_j,k_j)|\bigg\|_{L^2 l^2}.
		\end{aligned}
		$$
		Consequently, the estimates for $M_j$ and $N$ directly imply those for $\widetilde{M}_j$ and $\widetilde{N}$. Specifically, for $J(\widetilde{M}_1)$, we have
		$$
		J(\widetilde{M}_1) \lesssim \big\|\mathcal{F}_{\tau,x}^{-1} ( \langle d_\tau(\sigma_1+k_1^2) \rangle^{\frac{1}{2}}  |\widetilde{\Pi_\tau v^n_1}|  )  \mathcal{F}_{\tau,x}^{-1} ( |\widetilde{\Pi_\tau v_2^n}|)    \mathcal{F}_{\tau,x}^{-1}( \langle k_3\rangle^{\frac{1}{2}}|\widetilde{\Pi_\tau v_3^n|} )\big\|_{X_{\tau}^{0,-\frac{3}{8}}}.
		$$
		This allows us to utilize the previously established estimate for $J(M_1)$, which was
		$$
		\begin{aligned}
			J(M_1) & \lesssim \big\|\mathcal{F}_{\tau,x}^{-1} ( \langle d_\tau(\sigma_1+k_1^2) \rangle^{\frac{1}{2}}  |\widetilde{\Pi_\tau v^n_1}|  )  \mathcal{F}_{\tau,x}^{-1} ( |\widetilde{\Pi_\tau v_2^n}|)  \mathcal{F}_{\tau,x}^{-1}  ( \langle k_3\rangle^{\frac{1}{2}}|\widetilde{\Pi_\tau v_3^n|} )\big\|_{X_{\tau}^{0,-\frac{3}{8}}} \\
			& \lesssim T_1^{\varepsilon} \|v_1^n\|_{X_\tau^{\frac{1}{2},\frac{1}{2}}}\|v_2^n\|_{X_\tau^{\frac{1}{2},\frac{1}{2}}}  \|\overline{v}_3^n\|_{X_\tau^{\frac{1}{2},\frac{1}{2}}}.
		\end{aligned}
		$$
		This eventually gives the desired result.
	\end{proof}


\begin{thebibliography}{10}
		\bibitem{bahouri2022global}
		H.~Bahouri and G.~Perelman.
		\newblock Global well-posedness for the derivative nonlinear {S}chr{\"o}dinger
		equation.
		\newblock {\em Invent. Math.}, 229:639--688, 2022.
		
		\bibitem{BiagioniLinares}
	    H.~Biagioni and F.~Linares.
		\newblock Ill-posedness for the derivative {S}chr{\"o}dinger and generalized
		{Benjamin-Ono} equations.
		\newblock {\em Trans. Amer. Math. Soc.}, 353:3649--3659, 2001.
		
		\bibitem{bourgain1993}
		J.~Bourgain.
		\newblock Fourier transform restriction phenomena for certain lattice subsets
		and applications to nonlinear evolution equations {I}. {S}chr{\"o}dinger
		equations.
		\newblock {\em Geom. Funct. Anal}, 3:107--156, 1993.
		
		\bibitem{caonew}
		J.~Cao, B.~Li, and Y.~Lin. 
		\newblock A new second-order low-regularity integrator for the cubic nonlinear {S}chr{\"o}dinger equation. 
		\newblock {\em IMA J. Numer. Anal.}, 44:1313--1345, 2024.
		
		\bibitem{caolly}
		J.~Cao, B.~Li, Y.~Lin, and F.~Yao.
		\newblock Numerical approximation of discontinuous solutions of the semilinear wave equation.
		\newblock {\em SIAM J. Numer. Anal.}, 63:214--238, 2025.
		
		
		\bibitem{colliander2001global}
		J.~Colliander, M.~Keel, G.~Staffilani, H.~Takaoka, and T.~Tao.
		\newblock Global well-posedness for {S}chr{\"o}dinger equations with
		derivative.
		\newblock {\em SIAM J. Math. Anal.}, 33:649--669, 2001.
		
		\bibitem{colliander2002refined}
		J.~Colliander, M.~Keel, G.~Staffilani, H.~Takaoka, and T.~Tao.
		\newblock A refined global well-posedness result for {S}chr{\"o}dinger
		equations with derivative.
		\newblock {\em SIAM J. Math. Anal.}, 34:64--86, 2002.
		
		\bibitem{deng2021optimal}
		Y.~Deng, A.~R. Nahmod, and H.~Yue.
		\newblock Optimal local well-posedness for the periodic derivative nonlinear
		{S}chr{\"o}dinger equation.
		\newblock {\em Commun. Math. Phys.}, 384:1061--1107, 2021.
		
		\bibitem{Dysthe1979-waterNLS}
		K.~B. Dysthe.
		\newblock Note on a modification to the nonlinear {S}chr{\"o}dinger equation
		for application to deep water waves.
		\newblock {\em Proc. R. Soc. Lond. A}, 369:105--114, 1979.
		
		
		\bibitem{feng_maierhofer_schratz_2023}
		Y.~Feng, G.~Maierhofer, and K.~Schratz.
		\newblock Long-time error bounds of low-regularity integrators for nonlinear
		{S}chr{\"o}dinger equations.
		\newblock {\em Math. Comp.}, 93:1569--1598, 2023.
		
		\bibitem{feng2025explicit}
		Y.~Feng, G.~Maierhofer, and C.~Wang.
		\newblock Explicit symmetric low-regularity integrators for the nonlinear
		{S}chr{\"o}dinger equation.
		\newblock {\em SIAM J. Sci. Comput.}, 47:A2154--A2179, 2025.
		
		\bibitem{fengschratz2024sg}
		Y.~Feng and K.~Schratz.
		\newblock Improved uniform error bounds on a {L}awson-type exponential
		integrator for the long-time dynamics of sine-{G}ordon equation.
		\newblock {\em Numer. Math.}, 156:1455--1477, 2024.
		
		
		\bibitem{Hayashi1993}
		N.~Hayashi.
		\newblock The initial value problem for the derivative nonlinear
		{S}chr{\"o}dinger equation in the energy space.
		\newblock {\em Nonlinear Anal.}, 20:823--833, 1993.
		
		\bibitem{HayashiOzawa1992}
		N.~Hayashi and T.~Ozawa.
		\newblock On the derivative nonlinear {S}chr{\"o}dinger equation.
		\newblock {\em Physica D}, 55:14--36, 1992.
		
		
		\bibitem{Herr2006}
		S.~Herr.
		\newblock On the Cauchy problem for the derivative nonlinear {S}chr{\"o}dinger
		equation with periodic boundary condition.
		\newblock {\em Int. Math. Res. Not. IMRN}, 2006:Art. ID~96763, 33 pp., 2006. 
		
		\bibitem{Hochacta}
		M.~Hochbruck and A.~Ostermann.
		\newblock Exponential integrators.
		\newblock {\em Acta Numer.}, 19:209--286, 2010.
		
	
		
		\bibitem{Jimcom}
		L.~Ji, H.~Li, A.~Ostermann, and C.~Su.
		\newblock Filtered {Lie-Trotter} splitting for the “good” {B}oussinesq
		equation: low regularity error estimates.
		\newblock {\em Math. Comp.}, 94:2345--2365, 2025.
		
		\bibitem{ji2026low}
		L.~Ji, H.~Li, and A.~Ostermann.
		\newblock A low regularity exponential-type integrator for the derivative nonlinear {S}chr{\"o}dinger equation.
		\newblock {\em Preprint arXiv:2601.20212}, 2026.
		
		
		
		\bibitem{Jisiam}
		L.~Ji, A.~Ostermann, F.~Rousset, and K.~Schratz.
		\newblock Low regularity full error estimates for the cubic nonlinear
		{S}chr{\"o}dinger equation.
		\newblock {\em SIAM J. Numer. Anal.}, 62:2071--2086, 2024.
		
		\bibitem{Jiima}
		L.~Ji, A.~Ostermann, F.~Rousset, and K.~Schratz.
		\newblock Low regularity error estimates for the time integration of 2{D}
		{NLS}.
		\newblock {\em IMA J. Numer. Anal.}, 45:2023--2059, 2025.
		
		\bibitem{JiZhao}
		L.~Ji and X.~Zhao.
		\newblock Error estimates of time-splitting schemes for nonlinear {Klein--Gordon} equation with rough data.
		\newblock To appear in {\em SIAM J. Numer. Anal.}, 2026.
		
		\bibitem{kaup1978exact}
		D.~J. Kaup and A.~C. Newell.
		\newblock An exact solution for a derivative nonlinear {S}chr{\"o}dinger
		equation.
		\newblock {\em J. Math. Phys.}, 19:798--801, 1978.
		
		
		\bibitem{lima2022ns}
		B.~Li, S.~Ma, and K.~Schratz.
		\newblock A semi-implicit exponential low-regularity integrator for the
		{N}avier--{S}tokes equations.
		\newblock {\em SIAM J. Numer. Anal.}, 60:2273--2292, 2022.
		
		\bibitem{lischratzzivcovich2023kg}
		B.~Li, K.~Schratz, and F.~Zivcovich.
		\newblock A second-order low-regularity correction of {L}ie splitting for the
		semilinear {K}lein--{G}ordon equation.
		\newblock {\em ESAIM Math. Model. Numer. Anal.}, 57:899--919, 2023.
		
		\bibitem{liwunls}
		B.~Li and Y.~Wu.
		\newblock A fully discrete low-regularity integrator for the 1{D} periodic cubic
		nonlinear {S}chr{\"o}dinger equation.
		\newblock {\em Numer. Math.}, 149:151--183, 2021.
		
		\bibitem{liwu2026kdv}
		B.~Li and Y.~Wu.
		\newblock An unfiltered low-regularity integrator for the {K}d{V} equation
		with solutions below $H^1$.
		\newblock {\em Found. Comput. Math.}, 26:1321--1380, 2026.
		
		
		\bibitem{li2018numerical}
		S. Li, X. Li, and F. Shi.
		\newblock Numerical methods for the derivative nonlinear {S}chr{\"o}dinger
		equation.
		\newblock {\em Int. J. Nonlinear Sci. Numer. Simul.}, 19:239--249, 2018.
		
		\bibitem{mio1976modified}
		K.~Mio, T.~Ogino, K.~Minami, and S.~Takeda.
		\newblock Modified nonlinear {S}chr{\"o}dinger equation for {A}lfv{\'e}n waves
		propagating along the magnetic field in cold plasmas.
		\newblock {\em J. Phys. Soc. Japan}, 41:265--271, 1976.
		
		\bibitem{mjolhus1976modulational}
		E.~Mj{\o}lhus.
		\newblock On the modulational instability of hydromagnetic waves parallel to
		the magnetic field.
		\newblock {\em J. Plasma Phys.}, 16:321--334, 1976.
		
		\bibitem{mjolhus1989nonlinear}
		E.~Mj{\o}lhus.
		\newblock Nonlinear {A}lfv{\'e}n waves and the {DNLS equation}: oblique
		aspects.
		\newblock {\em Phys. Scr.}, 40:227--237, 1989. 
		
		\bibitem{mjolhus1986alfven}
		E.~Mj{\o}lhus and J.~Wyller.
		\newblock Alfv{\'e}n solitons.
		\newblock {\em Phys. Scr.}, 33:442--451, 1986. 
		
		\bibitem{mosesself2007}
		J.~Moses, B.~A. Malomed, and F.~W. Wise.
		\newblock Self-steepening of ultrashort optical pulses without
		self-phase-modulation.
		\newblock {\em Phys. Rev. A}, 76:021802, 2007.
		
		
		\bibitem{Ostjems}
		A.~Ostermann, F.~Rousset, and K.~Schratz.
		\newblock Fourier integrator for periodic {NLS}: low regularity estimates via
		discrete {Bourgain} spaces.
		\newblock {\em J. Eur. Math. Soc.}, 25:3913--3952, 2023.
		
		\bibitem{ostfocm}
		A.~Ostermann and K.~Schratz.
		\newblock Low regularity exponential-type integrators for semilinear {S}chr{\"o}dinger equations.
		\newblock {\em Found. Comput. Math.}, 18:731--755, 2018.
		
		\bibitem{Roupaa}
		F.~Rousset and K.~Schratz.
		\newblock Convergence error estimates at low regularity for time
		discretizations of {KdV}.
		\newblock {\em Pure Appl. Anal.}, 4:127--152, 2022.
		
		
		\bibitem{Takaoka1999}
		H.~Takaoka.
		\newblock Well-posedness for the one-dimensional nonlinear {S}chr{\"o}dinger
		equation with the derivative nonlinearity.
		\newblock {\em Adv. Differential Equations}, 4:561--680, 1999.
		
		\bibitem{Tao2006}
		T.~Tao. 
		\newblock Nonlinear Dispersive Equations: Local and Global Analysis.
		\newblock Amer. Math. Soc., Providence RI, 2006.
		
		\bibitem{wu2014global}
		Y.~Wu.
		\newblock Global well-posedness for the nonlinear {S}chr{\"o}dinger equation
		with derivative in energy space.
		\newblock {\em Anal. PDE}, 6:1989--2002, 2014.
		
		\bibitem{wu2015global}
		Y.~Wu.
		\newblock Global well-posedness on the derivative nonlinear {S}chr{\"o}dinger
		equation.
		\newblock {\em Anal. PDE}, 8:1101--1112, 2015. 
		
		\bibitem{WuandYao2022}
		Y.~Wu and F.~Yao.
		\newblock A first-order {F}ourier integrator for the nonlinear {S}chr{\"o}dinger equation on $\mathbb T$ without loss of regularity.
		\newblock {\em Math. Comp.}, 91:1213--1235, 2022.
		
		\bibitem{Yang1974-intFormalism}
		C.~N. Yang.
		\newblock Integral formalism for gauge fields.
		\newblock {\em Phys. Rev. Lett.}, 33:445--447, 1974. 
		
		\bibitem{YangMills1954}
		C.~N. Yang and R.~L. Mills.
		\newblock Conservation of isotopic spin and isotopic gauge invariance.
		\newblock {\em Phys. Rev.}, 96:191--195, 1954.
		
	\end{thebibliography}
\end{document}